\theoremstyle{plain}
\theoremstyle{definition}
\theoremstyle{plain}
\newtheorem{theorem}{Theorem}[section]
\newtheorem{proposition}{Proposition}[section]
\newtheorem{remark}{Remark}[section]
\newtheorem{conjecture}{Conjecture}[section]
\newtheorem{example}{Example}[section]
\title{On the identities and cocharacters of the algebra of 3 $\times$ 3 matrices with orthosymplectic superinvolution}
\date{}
\author{Sara Accomando}
\address{Dipartimento di Matematica e Informatica, Università degli Studi di Palermo, Via Archirafi, 34, 90123, Palermo, Italy. 
\\{\em E-mail}: {\tt sara.accomando@unipa.it}}
\begin{document}
\renewcommand{\abstractname}{\textbf{Abstract}}

\begin{abstract}
Let $M_{1,2}(F)$ be the algebra of $3 \times 3$ matrices with orthosymplectic superinvolution $*$ over a field $F$ of characteristic zero. We study the $*$-identities of this algebra through the representation theory of the group $\mathbb{H}_n = (\mathbb{Z}_2 \times \mathbb{Z}_2) \sim S_n$. We decompose the space of multilinear $*$-identities of degree $n$ into the sum of irreducibles under the $\mathbb{H}_n$-action in order to study the irreducible characters appearing in this decomposition with non-zero multiplicity. Moreover, by using the representation theory of the general linear group, we determine all the $*$-polynomial identities of $M_{1,2}(F)$ up to degree $3$.

\

\noindent 2020 \textit{Mathematics Subject Classification.} Primary 16R10, 16R50; Secondary 16W55, 16W50.
\\\textit{Keywords:} matrix; polynomial identity; cocharacter; superalgebra; superinvolution.
\end{abstract}

\maketitle

\section{Introduction}

Let $F$ be a field of characteristic zero and let $F \langle X \rangle$ be the free associative algebra on a countable set $X = \{ x_1, x_2, \dots \}$ over $F$.
An associative $F$-algebra $A$ is a $PI$-algebra if it satisfies a non-trivial polynomial $f(x_1, \dots , x_n) \in F \langle X \rangle$, i.e., if $f(a_1, \dots , a_n) = 0$, for all $a_1, \dots , a_n \in A$. We say that $f \equiv 0$ is a polynomial identity for the algebra $A$. The set of all the polynomial identities satisfied by $A$, $Id(A)$, is a $T$-ideal of the free algebra, i.e., it is an ideal invariant under all the endomorphisms of $F \langle X \rangle$.

In case of characteristic zero, Specht \cite{Sp} conjectured that every $T$-ideal of the free algebra is finitely generated and it was proved in the affirmative by Kemer in 1987 \cite{K}. However the generators are known only in very few cases and the problem is still open for $M_k(F)$, with $k \geq 3$, where $M_k(F)$ is the algebra of $k \times k$ matrices over $F$. 
For this reason, since in characteristic zero every $T$-ideal is generated by multilinear polynomials, one studies $P_n \cap Id(A)$, $n \geq 1$, where $P_n$ is the space of multilinear polynomials of degree $n$.
To this end, we consider the representation theory of the symmetric group $S_n$ and we define an action on $P_n$: $\sigma f(x_1, \dots , x_n) := f(x_{\sigma(1)}, \dots , x_{\sigma(n)})$. The space $P_n \cap Id(A)$ is invariant under this action and, so, $\frac{P_n}{P_n \cap Id(A)}$ has a structure of $S_n$-module and its character is called the $n$-th cocharacter of $A$.
In \cite{B}, Benanti determined the conditions such that the multiplicities are non-zero in the decomposition of the $n$-th cocharacter of $M_3(F)$ in irreducible $S_n$-character.

In this paper, we are interested in studying the polynomial identities satisfied by $M_3(F)$ with an additional structure. This study was started in \cite{BC, LM}.
Here we consider the superalgebra $M_3(F)$ endowed with a superinvolution. 
Many researchers dealt with this additional structure (see \cite{EV, R}). In the theory of polynomial identities, superalgebras with superinvolutions are of particular interest, since in \cite{AGK}, Aljadeff, Giambruno and Karasik proved that there exists a strict connection between algebras with involution and finite dimensional superalgebras with superinvolution.

During the years, researchers obtained some information about the $*$-identities satisfied by a superalgebra with superinvolution, through the study of the sequences attached to a PI-algebra (see \cite{CISV, GILM1, GILM2, I, ILM, IM}).

In particular, the superinvolutions on $M_n(F)$ were classified by Racine in \cite{R}. He proved that there are two types, the transpose and the orthosymplectic superinvolution. The study of the $*$-identities satisfied by these superalgebras with superinvolutions was started by Giambruno, Ioppolo and Martino in \cite{GIM}. They focused on the standard polynomials and they determined the minimal degree of a standard polynomial vanishing on suitable subsets of symmetric or skew matrices for both types of superinvolutions.

A complete characterization of the cocharacter of $M_3(F)$ with transpose superinvolution was done in \cite{CSV}, in which they also determine all the $*$-identities up to degree $3$.

Here we consider $M_3(F)$ with orthosymplectic superinvolution $*$ and we study the $*$-identities through the representation theory of the group $\mathbb{H}_n = (\mathbb{Z}_2 \times \mathbb{Z}_2) \sim S_n$. We decompose the space of multilinear $*$-identities of degree $n$ into the sum of irreducibles under the $\mathbb{H}_n$-action in order to study the irreducible characters appearing in this decomposition with non-zero multiplicity. Moreover, by using the representation theory of the general linear group, we determine all the $*$-polynomial identities of $M_{1,2}(F)$ up to degree $3$.

\section{Preliminaries}

Throughout the paper, we will denote by $F$ a field of characteristic zero. An associative algebra $A$ is a \textit{$\mathbb{Z}_2$-graded algebra} or a \textit{superalgebra} if it has a vector space decomposition $A = A_0 \oplus A_1$ such that $A_0 A_0 + A_1 A_1 \subseteq A_0$ and $A_0 A_1 + A_1 A_0 \subseteq A_1$.
The elements of $A_0$ and $A_1$ are called \textit{homogeneous of degree zero} (or even degree) and of \textit{degree one} (or odd degree), respectively.

Let $A = A_0 \oplus A_1$ be a superalgebra. A superinvolution on $A$ is a graded linear map $*: A \longrightarrow A$, i.e., a map preserving the grading, such that $(a^{*})^{*} = a$ and $(ab)^{*}=(-1)^{|a||b|}b^{*}a^{*}$ where $|c|$ denotes the homogeneous degree of the element $c \in A$.
A superalgebra endowed with a superinvolution is called a \textit{${*}$-superalgebra}. Since char$F = 0$, the $*$-superalgebra can be written as
   $$A = A_0^+ \oplus A_0^- \oplus A_1^+ \oplus A_1^-,$$
where, for $i = 0, 1$, $A_i^+ = \{a \in A_i \ | \ a^{*} = a\}$ and $A_i^- = \{a \in A \ | \ a^{*} = -a\}$ denote the sets of homogeneous symmetric and skew elements of $A_i$, respectively.

Let $X = \{ x_1, x_2, \dots \}$ be a countable set of non-commuting variables. We write $X = Y \cup Z$ as disjoint union of two infinite homogeneous subsets $Y = \{ y_1, y_2, \dots \}$ and $Z = \{ z_1, z_2, \dots \}$ of degree $0$ and $1$, respectively. Then $F \langle Y \cup Z \rangle = \langle y_1, z_1, y_2, z_2, \dots \rangle$ has a natural structure of free superalgebra if we require that the variables from $Y$ have degree zero and the variables from $Z$ have degree one. This algebra is called the free superalgebra over $F$. 
Moreover, if we write each set as the disjoint union of two infinite sets of symmetric and skew elements, respectively, then we have the free $*$-superalgebra
   $$F \langle Y \cup Z, * \rangle = F \langle y_1^+, y_1^-, z_1^+, z_1^-, y_2^+, y_2^-, z_2^+, z_2^-, \dots \rangle,$$
where $y_i^+ = y_i + y_i^*$ denotes a symmetric variable of even degree, $y_i^- = y_i - y_i^*$ a skew symmetric variable of even degree, $z_i^+ = z_i + z_i^*$ a symmetric variable of odd degree and $z_i^- = z_i - z_i^*$ a skew variables of odd degree.

We say that $f(y_1^+, \dots, y_n^+, y_1^-, \dots, y_m^-, z_1^+, \dots, z_t^+, z_1^-, \dots, z_s^-) \in F \langle Y \cup Z, * \rangle$ is a $*$-identity of $A$, and we write $f \equiv 0$, if $f(u_1^+, \dots, u_n^+, u_1^-, \dots, u_m^-, v_1^+, \dots, v_t^+, v_1^-, \dots, v_s^-) = 0,$ for all $u_1^+, \dots, u_n^+ \in A_0^+$, $u_1^-, \dots, u_m^- \in A_0^-$, $v_1^+, \dots, v_t^+ \in A_1^+$, $v_1^-, \dots, v_s^- \in A_1^-$.

We consider the set of all $*$-identities of $A$ 
   $$Id_2^*(A) = \{f \in F \langle Y \cup Z, * \rangle : f \equiv 0 \text{ on } A\}$$
which is a $T_2^{*}$-ideal of $F \langle Y \cup Z, * \rangle$, i.e., an ideal invariant under all graded endomorphisms of the free superalgebra  commuting with the superinvolution $*$.

It is well known that in characteristic zero every $*$-identity is equivalent to a system of multilinear $*$-identities.
We denote by $$P_n^{*} = span_F\{w_{\sigma (1)} \dots w_{\sigma (n)} \ | \ \sigma \in S_n, \ w_i \in \{y_i^+, y_i^-, z_i^+, z_i^-\}, \ i = 1, \dots , n\}$$ the space of all multilinear $*$-polynomials of degree $n$ in $y_1^+, y_1^-, z_1^+, z_1^-, \dots , y_n^+, y_n^-, z_n^+, z_n^-$.
Then, the study of $Id_2^{*}(A)$ is equivalent to the study of $P_n^{*} \cap Id_2^{*}(A), \ \forall \ n \geq 1$.

Now, we consider the group $$\mathbb{H}_n = (\mathbb{Z}_2 \times \mathbb{Z}_2) \sim S_n = \{((g_1, h_1), \dots, (g_n, h_n); \ \sigma) : (g_i, h_i) \in (\mathbb{Z}_2 \times \mathbb{Z}_2), \ \sigma \in S_n\},$$ with multiplication given by $$((g_1, h_1), \dots, (g_n, h_n); \ \sigma)((a_1, b_1), \dots, (a_n, b_n); \ \tau) = ((\bar g_1, \bar h_1), \dots, (\bar g_n, \bar h_n); \ \sigma \tau),$$ where $\bar g_i = g_i a_{\sigma^{-1}(i)}$ and $\bar h_i = h_i b_{\sigma^{-1}(i)}$, for all $1 \leq i \leq n$.

If we write $\mathbb{Z}_2 \times \mathbb{Z}_2 = \{1, *\} \times \{1, \zeta\} = \{1, *, \zeta, *\zeta \}$, we get $\mathbb{H}_n$ acting on the left on $P_n^{*}$ by setting, for any $h = (a_1, \dots , a_n; \sigma) \in \mathbb{H}_n$:
\begin{center}
  $h y_i^+ = y_{\sigma(i)}^+, \hspace{5,6 cm} h y_i^- = \begin{cases} y_{\sigma(i)}^-, & \text{if $a_{\sigma(i)} \in \{1, \zeta \}$} \\ -y_{\sigma(i)}^-, & \text{if $a_{\sigma(i)} \in \{*, * \zeta \}$}\end{cases},$
 \end{center}
 \begin{center}
  $h z_i^+ = \begin{cases} z_{\sigma(i)}^+, & \text{if $a_{\sigma(i)} \in \{1, * \}$} \\ -z_{\sigma(i)}^+, & \text{if $a_{\sigma(i)} \in \{\zeta, * \zeta \}$}\end{cases}, \hspace{2 cm} h z_i^- = \begin{cases} z_{\sigma(i)}^-, & \text{if $a_{\sigma(i)} \in \{1, * \zeta \}$} \\ -z_{\sigma(i)}^-, & \text{if $a_{\sigma(i)} \in \{*, \zeta \}$}\end{cases},$
 \end{center}
for any $i = 1, \dots, n$. Since $P_n^{*} \cap Id_2^{*}(A)$ is invariant under this action, 
 $$P_n^{*}(A) = \frac{P_n^{*}}{P_n^{*} \cap Id_2^{*}(A)}$$
has a structure of $\mathbb{H}_n$-module, for all $n \geq 1$. Its character, $\chi_n^{*}(A)$, is called the \textit{$n$th $*$-cocharacter of $A$} and the sequence $\{\chi_n^{*}(A)\}_{n \geq 1}$ is the \textit{${*}$-cocharacter sequence of $A$}. 

Let $n \geq 1$ and write $n = n_1 + n_2 + n_3 + n_4$. We define $\langle n \rangle = (n_1, n_2, n_3, n_4)$, a composition of $n$, as a sum of four non-negative integers. We say that $\langle \lambda \rangle$ is a multipartition of $n = n_1 + n_2 + n_3 + n_4$, and we write $\langle \lambda \rangle \vdash \langle n \rangle$ (or $\langle \lambda \rangle \vdash n$), if $\langle \lambda \rangle = (\lambda(1), \lambda(2), \lambda(3), \lambda(4))$, with $\lambda(i) \vdash n_i, \ i = 1, \dots, 4$.

Since char$F = 0$, there is a one-to-one correspondence between the irreducible $\mathbb{H}_n$-characters and the multipartitions $\langle \lambda \rangle \vdash n$. More precisely,
     \begin{equation}\label{eqn:cocarattere1}
         \begin{split}
             \chi_n^{*}(A) = \sum_{\langle \lambda \rangle \vdash n} m_{\langle \lambda \rangle} \chi_ {\langle \lambda \rangle},
         \end{split}
     \end{equation}
where $\chi_{\langle \lambda \rangle}$ is the irreducible $\mathbb{H}_n$-character associated to the multipartition $\langle \lambda \rangle$ with corresponding multiplicity $m_{\langle \lambda \rangle} \geq 0$. 

For $\langle n \rangle = (n_1, n_2, n_3, n_4)$ fixed, let $P_{n_1, n_2, n_3, n_4} \subseteq P_n^{*}$ be the vector space of the multilinear $*$-polynomials in which the first $n_1$ variables are symmetric of degree zero, the next $n_2$ variables are skew of degree zero, the next $n_3$ variables are symmetric of degree one and the last $n_4$ variables are skew of degree one. The group $S_{n_1} \times S_{n_2} \times S_{n_3} \times S_{n_4}$ acts on the left on the vector space $P_{n_1, n_2, n_3, n_4}$ by permuting the variables of the same homogeneous degree which are all symmetric or all skew at the same time. So $S_{n_1}$ permutes the even symmetric variables, $S_{n_2}$ permutes the even skew variables, $S_{n_3}$ permutes the odd symmetric variables and $S_{n_4}$ permutes the odd skew variables. In this way, $P_{n_1, n_2, n_3, n_4}$ becomes an $S_{n_1} \times S_{n_2} \times S_{n_3} \times S_{n_4}$-module. Since $P_{n_1, n_2, n_3, n_4} \cap Id^*(A)$ is invariant under this action, we get that 
 $$P_{n_1, n_2, n_3, n_4}(A) = \frac{P_{n_1, n_2, n_3, n_4}}{P_{n_1, n_2, n_3, n_4} \cap Id_2^{*}(A)}$$
has a induced structure of $S_{n_1} \times S_{n_2} \times S_{n_3} \times S_{n_4}$-module. We denote by $\chi_{n_1, n_2, n_3, n_4}(A)$ its character, which is called the \textit{$(n_1, n_2, n_3, n_4)$-th cocharacter} of $A$.

Since char$F = 0$, then by complete reducibility, we write it as a sum of irreducible characters:
  \begin{equation}\label{eqn:cocarattere4}
          \chi_{n_1, n_2, n_3, n_4}(A) = \sum_{\langle \lambda \rangle \vdash n} \bar{m}_{\langle \lambda \rangle} \chi_{\lambda(1)} \otimes \cdots \otimes \chi_{\lambda(4)},
  \end{equation}
where $\bar{m}_{\langle \lambda \rangle} \geq 0$ is the multiplicity of $\chi_{\lambda(1)} \otimes \cdots \otimes \chi_{\lambda(4)}$ in $\chi_{n_1, n_2, n_3, n_4}(A)$.

By a generalization of \cite[Theorem 1.3]{DG}, we obtain the following.

\begin{theorem}
    In the decompositions given in \eqref{eqn:cocarattere1} and \eqref{eqn:cocarattere4}, $m_{\langle \lambda \rangle} = \bar{m}_{\langle \lambda \rangle}$, for all $\langle \lambda \rangle \vdash n$.
\end{theorem}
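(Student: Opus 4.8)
The plan is to reduce the statement to the classical correspondence between the representation theory of wreath products and that of products of symmetric groups, exactly as in the involution case treated by Di Vincenzo and Giambruno in \cite{DG}. First I would observe that the group $\mathbb{H}_n = (\mathbb{Z}_2 \times \mathbb{Z}_2) \sim S_n$ plays here the role that $\mathbb{Z}_2 \sim S_n$ plays for algebras with involution: the abelian group $\mathbb{Z}_2 \times \mathbb{Z}_2$ has exactly four one-dimensional (linear) characters, corresponding to the four ``types'' of variables $y^+, y^-, z^+, z^-$, and its irreducible representations are all one-dimensional. By the standard description of irreducible representations of wreath products $G \sim S_n$ with $G$ abelian, the irreducible $\mathbb{H}_n$-modules are indexed precisely by the multipartitions $\langle \lambda \rangle = (\lambda(1), \dots, \lambda(4)) \vdash n$, where $\lambda(i) \vdash n_i$ records the cells assigned to the $i$-th linear character, and such an irreducible is induced from the ``Young-type'' subgroup $\bigl((\mathbb{Z}_2 \times \mathbb{Z}_2) \sim S_{n_1}\bigr) \times \cdots \times \bigl((\mathbb{Z}_2 \times \mathbb{Z}_2) \sim S_{n_4}\bigr)$, tensoring the appropriate linear character of each $(\mathbb{Z}_2 \times \mathbb{Z}_2)^{n_i}$ with the Specht module $S^{\lambda(i)}$ of $S_{n_i}$.

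Next I would set up the bridge between the two module structures. Fix a composition $\langle n \rangle = (n_1, n_2, n_3, n_4)$ of $n$ and consider the subspace $P_{n_1,n_2,n_3,n_4} \subseteq P_n^*$ of multilinear $*$-polynomials in which the first $n_1$ variables are of type $y^+$, the next $n_2$ of type $y^-$, and so on. One checks that $P_n^*$ decomposes, as a vector space, into a direct sum of $S_n$-translates of the $P_{n_1,n_2,n_3,n_4}$ over all compositions, and that the $\mathbb{H}_n$-action permutes these summands according to the underlying $S_n$-action while the ``sign'' data $(g_i,h_i)$ act by the linear characters of $\mathbb{Z}_2 \times \mathbb{Z}_2$ recorded in the action formulas in the excerpt. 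In representation-theoretic language, $P_n^*$ is the $\mathbb{H}_n$-module induced from the subgroup $\mathbb{H}_{n_1} \times \cdots \times \mathbb{H}_{n_4}$ acting on $P_{n_1,n_2,n_3,n_4}$, where on this latter space each $(\mathbb{Z}_2 \times \mathbb{Z}_2)^{n_i}$ acts through the $i$-th linear character and $S_{n_i}$ acts by permuting the variables of the $i$-th type; and the analogous statement holds modulo the $T_2^*$-ideal, since $Id_2^*(A)$ is invariant under all the relevant maps. Using that each $(\mathbb{Z}_2\times\mathbb{Z}_2)^{n_i}$ acts by a fixed linear character (hence contributes nothing beyond a one-dimensional twist), the $\mathbb{H}_{n_1}\times\cdots\times\mathbb{H}_{n_4}$-module $P_{n_1,n_2,n_3,n_4}(A)$ is, up to this twist, just the $S_{n_1}\times\cdots\times S_{n_4}$-module of the same name, whose character is $\chi_{n_1,n_2,n_3,n_4}(A)$ in \eqref{eqn:cocarattere4}.

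Then I would compute multiplicities on both sides. Decompose $\chi_{n_1,n_2,n_3,n_4}(A) = \sum_{\langle\lambda\rangle\vdash n}\bar m_{\langle\lambda\rangle}\,\chi_{\lambda(1)}\otimes\cdots\otimes\chi_{\lambda(4)}$ as in \eqref{eqn:cocarattere4}; inducing up to $\mathbb{H}_n$ and invoking the description of irreducible $\mathbb{H}_n$-characters above, together with the fact that inducing the tensor of (twisted) Specht modules from the Young subgroup yields exactly the irreducible $\chi_{\langle\lambda\rangle}$ with no multiplicities introduced (this is where one cites the generalization of \cite[Theorem 1.3]{DG}: the induction functor sends the $\langle\lambda\rangle$-isotypic piece to the $\langle\lambda\rangle$-isotypic piece bijectively), one obtains $\chi_n^*(A) = \sum_{\langle\lambda\rangle\vdash n}\bar m_{\langle\lambda\rangle}\,\chi_{\langle\lambda\rangle}$. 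Comparing with \eqref{eqn:cocarattere1} and using that the $\chi_{\langle\lambda\rangle}$ are linearly independent (being pairwise non-isomorphic irreducibles), we conclude $m_{\langle\lambda\rangle} = \bar m_{\langle\lambda\rangle}$ for every $\langle\lambda\rangle\vdash n$.

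The main obstacle I anticipate is purely bookkeeping rather than conceptual: one must verify carefully that the four-fold linear-character twist built into the $\mathbb{H}_n$-action (the precise pattern of signs in the $hy_i^-$, $hz_i^+$, $hz_i^-$ formulas, reflecting the decomposition $\mathbb{Z}_2\times\mathbb{Z}_2 = \{1,*\}\times\{1,\zeta\}$) matches the identification of the four variable types with the four linear characters of $\mathbb{Z}_2\times\mathbb{Z}_2$, so that the induced-module description is literally correct and no stray signs appear. Once this matching is pinned down, the argument is the word-for-word transcription of the involution case in \cite{DG}, with $\mathbb{Z}_2$ replaced by $\mathbb{Z}_2\times\mathbb{Z}_2$ and two variable types replaced by four.
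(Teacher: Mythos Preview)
Your proposal is correct and follows exactly the route the paper indicates: the paper gives no argument of its own but simply invokes ``a generalization of \cite[Theorem 1.3]{DG}'', and what you have written is precisely that generalization spelled out (replacing $\mathbb{Z}_2$ by $\mathbb{Z}_2\times\mathbb{Z}_2$ and two variable types by four). There is nothing to compare beyond noting that you have supplied the details the paper omits.
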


In particular, if $A$ is a finite dimensional algebra with dim$A_0^+ = d_1$, dim$A_0^- = d_2$, dim$A_1^+ = d_3$, dim$A_1^- = d_4$, 
   \begin{equation}\label{eqn:cocarattere2}
       \begin{split}
            \chi_n^{*}(A) = \sum_{\substack{\langle \lambda \rangle \vdash n, \\ h(\lambda(1)) \leq d_1, \ h(\lambda(2)) \leq d_2, \\ h(\lambda(3)) \leq d_3, \ h(\lambda(4)) \leq d_4}} m_{\langle \lambda \rangle} \chi_ {\langle \lambda \rangle},
       \end{split}
   \end{equation}    
where, for $i = 1, \dots, 4$, $h(\lambda(i))$ denotes the height of the partition $\lambda(i)$ (see \cite[Lemma 1.2]{DG}).

Now, for $m \geq 1$, let $F_m = F_m \langle Y \cup Z, * \rangle$ be the space of $*$-polynomials in the variables $y_1^+, \dots, y_m^+$, $y_1^-, \dots, y_m^-$, $z_1^+, \dots, z_m^+$, $z_1^-, \dots, z_m^-$. Let $V_1 = span_F\{y_1^+, \dots , y_m^+\}$, $V_2 = span_F\{y_1^-, \dots , y_m^-\}$, $V_3 = span_F\{z_1^+, \dots , z_m^+\}$ and $V_4 = span_F\{z_1^-, \dots , z_m^-\}$. Then the group $GL(V_1) \times GL(V_2) \times GL(V_3) \times GL(V_4) \cong GL_m^4 = GL_m \times GL_m \times GL_m \times GL_m$ acts naturally on the left on $V_1 \oplus V_2 \oplus V_3 \oplus V_4$ and this action can be diagonally extended to an action on $F_m$. Here $GL(V_i)$ is the group of all the automorphisms of the vector space $V_i$ and $GL_m$ denotes the general linear group of degree $m$.

Consider $F_m^n$ the subspace of all homogeneous $*$-polynomials of $F_m$ of degree $n \geq m$ which is a $GL_m^4$-submodule of $F_m$. Since $F_m^n$ is a $GL_m^4$-module and $F_m^n \cap Id_2^{*}(A)$ is invariant under this action, 
 $$F_m^n(A) = \frac{F_m^n}{F_m^n \cap Id_2^{*}(A)}$$ 
has a structure of $GL_m^4$-module and we denote by $\psi_n^{*}(A)$ its character.

There exists a one-to-one correspondence between the irreducible $GL_m^4$-characters and the multipartitions $\langle \lambda \rangle = (\lambda(1), \dots , \lambda(4))$ of $n$, where the $\lambda(i)$'s are partitions with at most $m$ parts, $i = 1, \dots, 4$ (see \cite[Theorem 12.4.4]{Dr}).

Hence denoted by $\psi_{\langle \lambda \rangle}$ the irreducible $GL_m^4$-character corresponding to the multipartition $\langle \lambda \rangle$, we have:
 \begin{equation}\label{eqn:cocarattere3}
     \psi_n^{*}(A) = \sum_{\substack{\langle \lambda \rangle \vdash n, \\ h(\langle \lambda \rangle) \leq m}} \bar{\bar{m}}_{\langle \lambda \rangle} \psi_{\langle \lambda \rangle},
 \end{equation}
where $\bar{\bar{m}}_{\langle \lambda \rangle} \geq 0$ and $h(\langle \lambda \rangle) = max \{h(\lambda(i)), \ i = 1, \dots, 4\}$.

For an extension of the result in the involution case (see \cite[Theorem 3]{G}), we get the following.

\begin{theorem}\label{th:cocharacter}
In the decompositions given in \eqref{eqn:cocarattere1} and \eqref{eqn:cocarattere3}, $m_{\langle \lambda \rangle} = \bar{\bar{m}}_{\langle \lambda \rangle}$, for all $\langle \lambda \rangle \vdash n$ such that $h(\langle \lambda \rangle) \leq m$.
\end{theorem}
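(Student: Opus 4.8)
The plan is to reduce the claim to the classical correspondence between the symmetric group and the general linear group, applied separately to each of the four families of variables. First, by the Theorem relating the decompositions \eqref{eqn:cocarattere1} and \eqref{eqn:cocarattere4}, we already know that $m_{\langle\lambda\rangle} = \bar m_{\langle\lambda\rangle}$ for every multipartition $\langle\lambda\rangle\vdash n$. Hence it suffices to show that $\bar m_{\langle\lambda\rangle} = \bar{\bar m}_{\langle\lambda\rangle}$ whenever $h(\langle\lambda\rangle)\le m$; that is, for $\langle\lambda\rangle = (\lambda(1),\dots,\lambda(4))\vdash n$ with $r_i := h(\lambda(i))\le m$ and $\langle n\rangle = (n_1,\dots,n_4)$, $n_i = |\lambda(i)|$, one must compare the $S_{n_1}\times S_{n_2}\times S_{n_3}\times S_{n_4}$-module $P_{n_1,n_2,n_3,n_4}(A)$ of \eqref{eqn:cocarattere4} with the $GL_m^4$-module $F_m^n(A)$ of \eqref{eqn:cocarattere3}.

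I would then use the highest weight vector description of $GL_m^4$-multiplicities: by the representation theory underlying \eqref{eqn:cocarattere3} (see \cite{Dr}), $\bar{\bar m}_{\langle\lambda\rangle}$ equals the dimension of the space of highest weight vectors of weight $\langle\lambda\rangle$ in $F_m^n(A)$. Such a vector is represented by a multihomogeneous $*$-polynomial involving only the first $r_1$ symmetric even, the first $r_2$ skew even, the first $r_3$ symmetric odd and the first $r_4$ skew odd variables, of multidegree dictated by the rows of the four partitions $\lambda(i)$, and annihilated by the four independent families of raising operators coming from the four copies of $\mathfrak{gl}_m$. On the symmetric group side, fixing for each $i$ a standard tableau $T_i$ of shape $\lambda(i)$ and the associated normalized Young symmetrizer (a primitive idempotent) $e_{T_i}\in FS_{n_i}$, the element $e := e_{T_1}\otimes\cdots\otimes e_{T_4}$ is a primitive idempotent of $F(S_{n_1}\times\cdots\times S_{n_4})$ corresponding to $\chi_{\lambda(1)}\otimes\cdots\otimes\chi_{\lambda(4)}$, so that $\dim e\,P_{n_1,n_2,n_3,n_4}(A) = \bar m_{\langle\lambda\rangle}$.

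The heart of the proof is to produce a linear isomorphism between $e\,P_{n_1,n_2,n_3,n_4}(A)$ and the space of weight-$\langle\lambda\rangle$ highest weight vectors in $F_m^n(A)$. In one direction, from a highest weight vector one fully multilinearizes each variable — in characteristic zero a reversible operation — obtaining a multilinear $*$-polynomial lying in $e\,P_{n_1,n_2,n_3,n_4}$; in the other, from a multilinear $*$-polynomial one identifies, colour by colour, the variables whose indices lie in a common row of $T_i$ (and renames), recovering a highest weight vector. Since $Id_2^*(A)$ is a $T_2^*$-ideal it is stable under these graded substitutions commuting with $*$, and since $\mathrm{char}\,F = 0$ the polarization/restitution passage is faithful modulo $Id_2^*(A)$; thus the isomorphism descends to the relatively free algebras and gives $\bar{\bar m}_{\langle\lambda\rangle} = \dim e\,P_{n_1,n_2,n_3,n_4}(A) = \bar m_{\langle\lambda\rangle} = m_{\langle\lambda\rangle}$.

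The main obstacle is this last correspondence: one must verify that no information is lost modulo $Id_2^*(A)$ when a multilinear $*$-polynomial is collapsed to a highest weight vector by identifying variables within the rows of the $T_i$ — equivalently, that a weight-$\langle\lambda\rangle$ highest weight vector is a $*$-identity of $A$ if and only if its full multilinearization is. This is precisely where the hypothesis $h(\langle\lambda\rangle)\le m$ enters (there are enough variables of each colour available in $F_m^n$), together with $\mathrm{char}\,F = 0$. For a single family of variables this is classical (\cite{Dr}) and the $*$-version with two families is \cite[Theorem 3]{G}; since here the four colours of variables are permuted by four independent symmetric groups and acted on by four independent copies of $GL_m$, the four single-colour correspondences decouple completely and can be carried out in parallel and then combined by tensoring, so that the argument of \cite{G} extends with only notational changes.
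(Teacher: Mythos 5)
Your proposal is correct and follows exactly the route the paper intends: the paper offers no written proof, only the remark that the statement is ``an extension of the result in the involution case (see \cite[Theorem 3]{G})'', and your argument --- reducing to $\bar m_{\langle\lambda\rangle}=\bar{\bar m}_{\langle\lambda\rangle}$, then matching $e_{T_1}\otimes\cdots\otimes e_{T_4}\,P_{n_1,n_2,n_3,n_4}(A)$ with the space of weight-$\langle\lambda\rangle$ highest weight vectors via polarization/restitution, faithful modulo $Id_2^*(A)$ in characteristic zero --- is precisely the standard mechanism behind that extension, carried out independently in each of the four families of variables. Nothing further is needed.
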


We recall that an irreducible $GL_m^4$-submodule $W^{\langle \lambda \rangle}$ of $F_m^n(A)$ is generated by a non-zero $*$-polynomial $f_{\langle \lambda \rangle}$ called \textit{highest weight vector} associated to the multipartition $\langle \lambda \rangle$ (see \cite[Theorem 12.4.12]{Dr}).

A multitableau $T_{\langle \lambda \rangle} = (T_{\lambda(1)}, T_{\lambda(2)}, T_{\lambda(3)}, T_{\lambda(4)})$ is a 4-tuple of Young tableaux $T_{\lambda(i)}, \ 1 \leq i \leq 4$.
The multitableau $\bar T_{\langle \lambda \rangle} = (\bar T_{\lambda(1)}, \dots , \bar T_{\lambda(4)})$ such that $1, \dots , n$ are inserted, in this order, from top to bottom, from left to right, column by column, from the tableau $\bar T_{\lambda(1)}$ to the tableau $\bar T_{\lambda(4)}$ is called initial multitableau of shape $\langle \lambda \rangle$. The initial multitableau is a standard multitableau, that is, each $\bar T_{\lambda (i)}, \ i =1, \dots, 4$ is a standard Young tableau.
The highest weight vector associated is called initial highest weight vector and it is given by

    \begin{equation}\label{eqn:highestweightvector}
        f_{\bar T_{\langle \lambda \rangle}} = \prod_{i = 1}^{\lambda(1)_1} St_{h_i(\lambda(1))}(y_1^+, \dots, y_{h_i(\lambda(1))}^+) \prod_{i = 1}^{\lambda(2)_1} St_{h_i(\lambda(2))}(y_1^-, \dots, y_{h_i(\lambda(2))}^-) 
    \end{equation}    
         $$\prod_{i = 1}^{\lambda(3)_1} St_{h_i(\lambda(3))}(z_1^+, \dots, z_{h_i(\lambda(3))}^+) \prod_{i = 1}^{\lambda(4)_1} St_{h_i(\lambda(4))}(z_1^-, \dots, z_{h_i(\lambda(4))}^-),$$

\noindent where $h_i(\lambda(j))$ is the height of the $i$th column of the Young diagram corresponding to the partition $\lambda(j)$, $\lambda(j)_1$ is the first element of the partition $\lambda(j)$, for all $j = 1, \dots, 4$, and $St_r (x_1, \dots, x_r) = \sum_{\theta \in S_r} sgn(\theta) x_{\theta(1)} \dots x_{\theta(r)}$ is the \textit{standard polynomial of degree $r$}.

For a fixed multitableau $T_{\langle \lambda \rangle}$ we denote by $f_{T_{\langle \lambda \rangle}} = f_{\bar T_{\langle \lambda \rangle}} \sigma^{-1}$ the highest weight vector associated to $T_{\langle \lambda \rangle}$, where $\sigma$ is the only element of $S_n$ transforming $\bar T_{\langle \lambda \rangle}$ in $T_{\langle \lambda \rangle}$ and $S_n$ acts on the right on $F_m^n$ by permuting places in which the variables occur.

From \cite[Proposition 15]{LM}, we have the following

\begin{proposition}\label{prop:hwv}
For all $\langle \lambda \rangle \vdash n$, $f_{\langle \lambda \rangle}$ can be expressed uniquely as a linear combination of vectors $f_{T_{\langle \lambda \rangle}}$, where $T_{\langle \lambda \rangle}$ is a standard multitableau.
\end{proposition}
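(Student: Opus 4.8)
The plan is to prove the statement first inside the free $*$-superalgebra $F_m^n$, where the $f_{T_{\langle\lambda\rangle}}$ with $T_{\langle\lambda\rangle}$ standard form an honest basis of a well-understood space, and then to push it down to $F_m^n(A)$. The first step is to count highest weight vectors. Fix $\langle\lambda\rangle\vdash n$ and put $n_i=|\lambda(i)|$. Viewing $F_m^n$ as the $GL_m^4$-module $(V_1\oplus V_2\oplus V_3\oplus V_4)^{\otimes n}$ and splitting this tensor power according to which of the four families of variables each tensor slot belongs to, one gets the $GL_m^4$-module isomorphism $F_m^n\cong\bigoplus_{n_1+\cdots+n_4=n}\binom{n}{n_1,\dots,n_4}\,V_1^{\otimes n_1}\otimes\cdots\otimes V_4^{\otimes n_4}$. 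Applying Schur--Weyl duality to each tensor factor $V_i^{\otimes n_i}$, the multiplicity in $F_m^n$ of the irreducible $GL_m^4$-module corresponding to $\langle\lambda\rangle$ comes out to $\binom{n}{n_1,\dots,n_4}\prod_{i=1}^4 f^{\lambda(i)}$, where $f^{\lambda(i)}$ is the number of standard Young tableaux of shape $\lambda(i)$; equivalently the space $H_{\langle\lambda\rangle}$ of highest weight vectors of weight $\langle\lambda\rangle$ in $F_m^n$ has dimension equal to the number of standard multitableaux of shape $\langle\lambda\rangle$ (pick the $n_i$-subset of $\{1,\dots,n\}$ placed in the $i$-th diagram, then fill each of the four diagrams in standard fashion).

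Next I would show that the $f_{T_{\langle\lambda\rangle}}$ with $T_{\langle\lambda\rangle}$ standard are a basis of $H_{\langle\lambda\rangle}$. The span of $\{f_{\bar T_{\langle\lambda\rangle}}\sigma^{-1}:\sigma\in S_n\}$ is a $GL_m^4$-submodule of $H_{\langle\lambda\rangle}$ containing a generator of the irreducible of weight $\langle\lambda\rangle$; by the usual dictionary between the left $GL_m^4$-action and the right $S_n$-action on places (here the $f_{T_{\langle\lambda\rangle}}$ play the role of the $S_n$-translates of a product of Young symmetrizers, which generate the relevant induced $S_n$-module) this span is all of $H_{\langle\lambda\rangle}$, so the $f_{T_{\langle\lambda\rangle}}$ over all multitableaux already span $H_{\langle\lambda\rangle}$. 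Because the four Young tableaux composing $T_{\langle\lambda\rangle}$ involve pairwise disjoint sets of variables and \eqref{eqn:highestweightvector} factors as the product of the corresponding four blocks of standard polynomials, the classical straightening relations may be applied independently inside each block, rewriting any $f_{T_{\langle\lambda\rangle}}$ as a linear combination of $f_{T'_{\langle\lambda\rangle}}$ with $T'_{\langle\lambda\rangle}$ standard. Together with the count of the first step, the standard $f_{T_{\langle\lambda\rangle}}$ are then a basis of $H_{\langle\lambda\rangle}$, which in particular gives the uniqueness assertion over $F_m^n$.

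Finally I would descend to $F_m^n(A)$. The projection $\pi\colon F_m^n\to F_m^n(A)$ is $GL_m^4$-equivariant, and since $\mathrm{char}\,F=0$ all $GL_m^4$-modules are completely reducible, so $\pi$ restricts to a surjection of highest weight spaces $H_{\langle\lambda\rangle}\twoheadrightarrow H_{\langle\lambda\rangle}(A)$. As $f_{\langle\lambda\rangle}$ generates an irreducible submodule $W^{\langle\lambda\rangle}$ of $F_m^n(A)$, it lies in $H_{\langle\lambda\rangle}(A)$, hence is the image under $\pi$ of some element of $H_{\langle\lambda\rangle}$, which by the previous step is a linear combination of standard $f_{T_{\langle\lambda\rangle}}$; applying $\pi$ yields the required expression in $F_m^n(A)$. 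For uniqueness one fixes once and for all a subfamily of those standard $f_{T_{\langle\lambda\rangle}}$ whose images form a basis of $H_{\langle\lambda\rangle}(A)$; with respect to it every highest weight vector, $f_{\langle\lambda\rangle}$ included, has uniquely determined coordinates.

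The step I expect to be the main obstacle is making the first two steps precise in the $\mathbb{Z}_2\times\mathbb{Z}_2$-graded setting: the multiplicity count needs the four-fold version of Schur--Weyl duality together with the multinomial bookkeeping of the four families of symmetric/skew and even/odd variables, and the straightening argument needs the verification that the column antisymmetrizations built into \eqref{eqn:highestweightvector} genuinely decouple along the four blocks. Both are routine extensions of the single-graded picture in \cite{Dr}, which is precisely why the proposition can be quoted from \cite[Proposition 15]{LM}.
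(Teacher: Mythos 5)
Your proposal is correct and follows essentially the standard argument: the paper itself gives no proof, simply quoting \cite[Proposition 15]{LM}, and the proof there (going back to the straightening-plus-dimension-count method in \cite{Dr}) is precisely the two-block version of what you carry out for four blocks. One small remark: the uniqueness in the statement is meant in $F_m^n$ (the paper writes $f_{\langle\lambda\rangle}$ with undetermined coefficients as a polynomial and then imposes that it be an identity), so your closing step about selecting a subfamily whose images form a basis of $H_{\langle\lambda\rangle}(A)$ is not needed and slightly misreads where the uniqueness lives.
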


\begin{theorem}\label{th:hwv}\textnormal{\cite[Theorem 12.4.4]{Dr}}
In the decomposition \eqref{eqn:cocarattere3}, $\bar{\bar{m}}_{\langle \lambda \rangle} \not = 0$ if and only if there exists a multitableau $T_{\langle \lambda \rangle}$ such that $f_{T_{\langle \lambda \rangle}} \notin Id_2^{*}(A)$. Moreover, $\bar{\bar{m}}_{\langle \lambda \rangle}$ is equal to the maximal number of highest weight vectors $f_{T_{\langle \lambda \rangle}} \notin Id_2^{*}(A)$ which are linearly independent in $F_m^n(A)$.
\end{theorem}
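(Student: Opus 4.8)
\emph{Proof proposal.} The statement is the ``four variable types'' analogue of \cite[Theorem 12.4.4]{Dr} (and of \cite[Theorem 3]{G} in the involution case), and I would obtain it by the same representation‑theoretic mechanism. The plan rests on two facts about homogeneous polynomial representations of $GL_m^4$ of degree $n$: first, that the category of such representations is semisimple (since $\mathrm{char}\,F=0$), with the irreducibles being exactly the modules $W^{\langle\lambda\rangle}$ realized as outer tensor products $W^{\lambda(1)}\otimes\cdots\otimes W^{\lambda(4)}$ of irreducible $GL_m$‑modules, for $\langle\lambda\rangle\vdash n$ with $h(\langle\lambda\rangle)\le m$; and second, that each $W^{\langle\lambda\rangle}$ carries, up to a scalar, a unique nonzero highest weight vector, while conversely any nonzero highest weight vector of weight $\langle\lambda\rangle$ in a module $M$ generates a submodule isomorphic to $W^{\langle\lambda\rangle}$ (see \cite[Theorem 12.4.12]{Dr}). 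Writing $M^{\langle\lambda\rangle}$ for the space of highest weight vectors of weight $\langle\lambda\rangle$ in $M$, it follows that $\dim M^{\langle\lambda\rangle}$ equals the multiplicity of $W^{\langle\lambda\rangle}$ in $M$; equivalently, $M\mapsto M^{\langle\lambda\rangle}$ is naturally isomorphic to $\mathrm{Hom}_{GL_m^4}(W^{\langle\lambda\rangle},M)$, hence is an exact functor on this semisimple category.

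I would then apply this to the short exact sequence of degree‑$n$ polynomial $GL_m^4$‑modules
\[
0 \longrightarrow F_m^n\cap Id_2^*(A) \longrightarrow F_m^n \longrightarrow F_m^n(A) \longrightarrow 0 .
\]
Exactness of $(\,\cdot\,)^{\langle\lambda\rangle}$ yields a short exact sequence of highest weight spaces, and since $Id_2^*(A)$ is a $GL_m^4$‑submodule one has $\big(F_m^n\cap Id_2^*(A)\big)^{\langle\lambda\rangle}=\big(F_m^n\big)^{\langle\lambda\rangle}\cap Id_2^*(A)$; therefore
\[
\bar{\bar m}_{\langle\lambda\rangle} = \dim \big(F_m^n(A)\big)^{\langle\lambda\rangle} = \dim \big(F_m^n\big)^{\langle\lambda\rangle} - \dim \Big(\big(F_m^n\big)^{\langle\lambda\rangle}\cap Id_2^*(A)\Big),
\]
which is precisely the maximal number of highest weight vectors of weight $\langle\lambda\rangle$ that remain linearly independent modulo $Id_2^*(A)$, i.e.\ the maximal number of linearly independent images of such vectors in $F_m^n(A)$. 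To turn this into the claimed statement about the $f_{T_{\langle\lambda\rangle}}$, I would invoke Proposition \ref{prop:hwv} together with the construction of the $f_{T_{\langle\lambda\rangle}}$ preceding it: the vectors $f_{T_{\langle\lambda\rangle}}$, with $T_{\langle\lambda\rangle}$ a standard multitableau, span $\big(F_m^n\big)^{\langle\lambda\rangle}$ (and the non‑standard ones add nothing), so the maximal number of them that are linearly independent in $F_m^n(A)$ equals $\bar{\bar m}_{\langle\lambda\rangle}$. For the first assertion: if some $f_{T_{\langle\lambda\rangle}}\notin Id_2^*(A)$, its class in $F_m^n(A)$ is a nonzero highest weight vector of weight $\langle\lambda\rangle$ and so generates a copy of $W^{\langle\lambda\rangle}$, whence $\bar{\bar m}_{\langle\lambda\rangle}\ge 1$; conversely, if $\bar{\bar m}_{\langle\lambda\rangle}\ge 1$ then $\big(F_m^n(A)\big)^{\langle\lambda\rangle}\ne 0$, so $\big(F_m^n\big)^{\langle\lambda\rangle}\not\subseteq Id_2^*(A)$, and since this space is spanned by the $f_{T_{\langle\lambda\rangle}}$, at least one of them fails to be a $*$‑identity.

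The only step that requires genuine care is the exactness of the highest‑weight‑space functor, equivalently the fact that every highest weight vector of the quotient $F_m^n(A)$ lifts to a highest weight vector of $F_m^n$; this is exactly where semisimplicity of the category of homogeneous degree‑$n$ polynomial $GL_m^4$‑representations in characteristic zero is used, and everything else is bookkeeping with multipartitions. Since the irreducibles of $GL_m^4$ are outer tensor products of irreducibles of the four copies of $GL_m$, and the initial highest weight vector \eqref{eqn:highestweightvector} factors accordingly through the four blocks of variables, Drensky's argument for a single $GL_m$ in \cite[Theorem 12.4.4]{Dr} carries over verbatim; this is the form the analogue of \cite[Theorem 3]{G} takes in the orthosymplectic setting.
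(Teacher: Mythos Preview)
The paper does not give its own proof of this theorem: it is stated as a citation of \cite[Theorem 12.4.4]{Dr} with no accompanying argument. Your sketch is correct and is precisely the standard representation-theoretic argument underlying that reference (extended from one copy of $GL_m$ to $GL_m^4$), so there is nothing to compare beyond noting that you have supplied what the paper deliberately omits.
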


\section{Algebra of 3 $\times$ 3 matrices with orthosymplectic superinvolution}

Let $M_n(F)$ be the algebra of $n \times n$ matrices over a field $F$ of characteristic zero. It is well known that, up to isomorphism, a $\mathbb{Z}_2$-grading on $M_n(F)$ is given by
     $$M_{k, h}(F) := \left \{ \left 
      (\begin{array}{cc}
         X & 0 \\
         0 & T
       \end{array} \right ) \right \} \oplus 
      \left \{ \left (\begin{array}{cc}
         0 & Y \\
         Z & 0 
        \end{array} \right ) \right \},$$
where $n = k + h$ and $X, Y, Z, T$ are $k \times k$, $k \times h$, $h \times k$ and $h \times h$ matrices, respectively.

In case $h = 2l$ it is possible to define a superinvolution $osp$, called orthosymplectic superinvolution, as follows:
 $$\left (\begin{array}{cc}
     X & Y \\
     Z & T
  \end{array} \right )^{osp} 
  = 
  \left (\begin{array}{cc}
    I_k  & 0 \\
    0 & Q
  \end{array} \right )^{-1} 
  \left (\begin{array}{cc}
    X  & -Y \\
    Z  & T
  \end{array} \right)^t
  \left (\begin{array}{cc}
    I_k  & 0 \\
    0 & Q
  \end{array} \right )
  =
  \left (\begin{array}{cc}
    X^t  & Z^tQ \\
    QY^t & -QT^tQ
  \end{array} \right),$$
where $I_k$ is the $k \times k$ identity matrix, 
$Q = \left (\begin{array}{cc}
    0 & I_l \\
   -I_l  & 0
\end{array} \right )$ and $t$ is the usual transpose.
\\We consider the particular case $k = 1$ and $l = 1$:
    $$M_{1, 2}(F) = \Set {\left (\begin{array}{c|cc}
    a & b & c \\
    \hline d & e & f \\
    g & h & i 
\end{array} \right ) | a, b, c, d, e, f, g, h, i \in F}
\ \ \text{and} \ \
    \left (\begin{array}{c|cc}
    a & b & c \\
    \hline d & e & f \\
    g & h & i 
\end{array} \right )^{osp}
=
\left (\begin{array}{c|cc}
    a & -g & d \\
    \hline c & i & -f \\
    -b & -h & e
\end{array} \right ).$$
In this case, if we denote by $e_{i, j}$ the usually elementary matrix,
    $$(M_{1, 2}(F))_0^+ = 
%    \Set{\left (\begin{array}{c|cc}
%    a & 0 & 0 \\
%    \hline 0 & b & 0 \\
%    0 & 0 & b 
%    \end{array} \right ) | a, b \in F} =
    span_F\{e_{11}, e_{22} + e_{33}\}, \ \ \
    (M_{1, 2}(F))_0^- = 
%    \Set{\left (\begin{array}{c|cc}
%    0 & 0 & 0 \\
%    \hline 0 & e & f \\
%    0 & h & -e 
%    \end{array} \right ) | e, f, h \in F} =
    span_F\{e_{22} - e_{33}, e_{23}, e_{32}\},$$
    $$(M_{1, 2}(F))_1^+ = 
%    \Set{\left (\begin{array}{c|cc}
%    0 & c & d \\
%    \hline d & 0 & 0 \\
%    -c & 0 & 0 
%    \end{array} \right ) | c, d \in F} =
    span_F\{e_{12} - e_{31}, e_{13} + e_{21}\}, \ \ \
    (M_{1, 2}(F))_1^- = 
%    \Set{\left (\begin{array}{c|cc}
%    0 & g & i \\
%    \hline -i & 0 & 0 \\
%    g & 0 & 0 
%    \end{array} \right ) | g, i \in F} =
    span_F\{e_{12} + e_{31}, e_{13} - e_{21}\}.$$

Since dim$(M_{1, 2}(F))_0^+ = 2$, dim$(M_{1, 2}(F))_0^- = 3$, dim$(M_{1, 2}(F))_1^+ = 2$ and dim$(M_{1, 2}(F))_1^- = 2$, then, by \eqref{eqn:cocarattere2} we obtain 

\begin{equation}\label{eqn:cocarattere}
    \chi_n^{*}(M_{1,2}(F)) = \sum_{\substack{\langle \lambda \rangle \vdash n, \\ h(\lambda(1)) \leq 2, \ h(\lambda(2)) \leq 3, \\ h(\lambda(3)) \leq 2, \ h(\lambda(4)) \leq 2}} m_{\langle \lambda \rangle} \chi_ {\langle \lambda \rangle}.
\end{equation}

\section{Classifying the $*$-identities of degree $\leq 3$}

In this section we determine all the $*$-identities of $M_{1,2}(F)$ of degree $\leq 3$.

We recall that, if we consider an algebra $A$ with involution $\phi$, $(A, \phi)$, we denote by $F \langle X, \phi \rangle$ the free associative algebra with involution generated by $X = \{x_1, x_2, \dots \}$ over $F$. So, an element $f(x_1, x_1^\phi, \dots , x_n, x_n^\phi) \in F \langle X, \phi \rangle$ is a $\phi$-polynomial identity for $A$ if $f(a_1, a_1^\phi, \dots , a_n, a_n^\phi) = 0$ for all substitutions $a_1, \dots, a_n \in A$. Then, we define also as $Id(A, \phi)$, the set of all $\phi$-polynomial identities of $A$, that is a $T^\phi$-ideal  of $F \langle X, \phi \rangle$, i.e., an ideal invariant under all endomorphisms of $F \langle X, \phi \rangle$ commuting with $\phi$.

Notice that $((M_{1, 2}(F))_0, osp)$ is in particular an algebra with involution and $((M_{1, 2}(F))_0, osp) \cong (F \oplus M_2(F), \phi_s)$, as algebras with involution, where $\phi_s$ is defined as $\phi_s(b + B) = b + B^s$ with $s$ the symplectic involution on $M_2(F)$. So we obtain that

\begin{remark}\label{rmk:Id}
$Id((M_{1, 2}(F))_0, osp) = Id(M_2(F), s)$.   
\end{remark}

Given $*$-polynomials $f_1, \dots , f_l \in F \langle Y \cup Z, * \rangle$, we denote by $\langle f_1, \dots , f_t \rangle_{T_2^*}$ the $T_2^*$-ideal of $F \langle Y \cup Z, * \rangle$ generated by $f_1, \dots , f_l$. 
We recall that a $*$-identity $g$ is a consequence of the $*$-polynomial identities $f_i$, with $i = 1, \dots , l$, if $g \in \langle f_1, \dots , f_l \rangle_{T_2^*}$.

By \cite[Theorem 1]{L}, we know that $Id((M_2(F), s))$ is generated, as a $T^\phi$-ideal, by $[y, x] := yx - xy$, where $y$ denotes a symmetric variable and $x$ denotes any variable in $X$. Hence we get.

\begin{remark}\label{rmk:Id(y)}
     Every $*$-identity on variables of degree zero follows from the $*$-identity $[y^+, y]$, where $y$ denotes a variable of homogeneous degree zero.
\end{remark}

Now, we prove the following.

\begin{remark}\label{rmk:Id(2)}
If $f$ is a $*$-identity of $M_{1,2}(F)$ of degree $2$, then $f = \alpha [y_1^+, y_2^+]$ or $f = \alpha [y^+, y^-]$, with $\alpha \in F$.
\end{remark}

\begin{proof}
Since $F$ is an infinite field, every $T_2^*$-ideal is generated by its multihomogeneous $*$-polynomials (see \cite{GZ}). Hence, we may assume that $f$ is a multihomogeneous $*$-polynomial of degree $2$. If $f$ is a $*$-identity on variables of degree zero, the result follows from Remark \ref{rmk:Id(y)}. Now, assume that at least one variable of degree one appears in $f$. 
It is easy to see, by making suitable evaluations of the variables of all possible $*$-polynomials, that $f$ must be the zero $*$-polynomial.
\end{proof}

We will establish a relation between $*$-identities of $M_{1,2}(F)$ in terms of variables in $Z^+$ and in $Z^-$.

We define the algebra isomorphism $\tilde{\varphi} : F \langle Y \cup Z, * \rangle \longrightarrow F \langle Y \cup Z, * \rangle$ given by
    $$\tilde{\varphi}(y^+) = y^+, \ \ \ \tilde{\varphi}(y^+) = y^-, \ \ \ \tilde{\varphi}(z^+) = z^-, \ \ \ \tilde{\varphi}(z^-) = z^+.$$

We let $\varphi : M_{1,2}(F) \longrightarrow M_{1,2}(F)$ be the linear map defined as the extension of 
 \begin{center}
     $e_{11} \mapsto e_{11}, \ e_{12} \mapsto e_{12}, \ e_{21} \mapsto - e_{21}, \ e_{22} \mapsto e_{22}, \ e_{13} \mapsto e_{13}, \ e_{31} \mapsto - e_{31}, \ e_{23} \mapsto e_{23}, \ e_{32} \mapsto e_{32}, \ e_{33} \mapsto e_{33}.$
 \end{center}
\begin{remark}
We observe that $\varphi$ is a linear isomorphism such that 
         $$\varphi [(M_{1,2}(F))_0^+] = (M_{1,2}(F))_0^+; \ \ \ \varphi [(M_{1,2}(F))_0^-] = (M_{1,2}(F))_0^-;$$
         $$\varphi [(M_{1,2}(F))_1^+] = (M_{1,2}(F))_1^-; \ \ \ \varphi [(M_{1,2}(F))_1^-] = (M_{1,2}(F))_1^+.$$
\end{remark}

Now, the following result holds.

\begin{theorem}\label{th:id}
Let $f \in F \langle Y \cup Z, * \rangle$. Then $f \in Id_2^*(M_{1,2}(F))$ if and only if $\tilde{\varphi}(f) \in Id_2^*(M_{1,2}(F))$.
\end{theorem}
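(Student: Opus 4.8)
The plan is to show that the map $\tilde{\varphi}$ on the free $*$-superalgebra is ``realized'' by the linear isomorphism $\varphi$ on $M_{1,2}(F)$, in the sense that evaluating $\tilde{\varphi}(f)$ on arbitrary homogeneous symmetric/skew elements gives the same answer as evaluating $f$ on the $\varphi$-images of those elements. Since $\tilde{\varphi}$ is an involution on $F\langle Y\cup Z,*\rangle$ (swapping $z^+\leftrightarrow z^-$ and, reading the intended definition, $y^+\leftrightarrow y^+$, $y^-\leftrightarrow y^-$ — note the typo in the displayed formula, where the second line should read $\tilde\varphi(y^-)=y^-$), it suffices to prove one direction: $f\in Id_2^*(M_{1,2}(F))$ implies $\tilde{\varphi}(f)\in Id_2^*(M_{1,2}(F))$; the converse follows by applying this to $\tilde{\varphi}(f)$ and using $\tilde{\varphi}^2=\mathrm{id}$.

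First I would record the precise compatibility between $\varphi$ and the grading/involution structure, which is exactly the content of the Remark immediately preceding the theorem: $\varphi$ restricts to a linear bijection $(M_{1,2}(F))_0^\pm\to(M_{1,2}(F))_0^\pm$ and $(M_{1,2}(F))_1^\pm\to(M_{1,2}(F))_1^\mp$. In particular, a substitution of a symmetric even element $u^+$ for $y^+$ corresponds to the substitution $\varphi(u^+)$, still a symmetric even element, for $y^+$; a skew odd element $v^-$ substituted for $z^-$ corresponds to $\varphi(v^-)$, a symmetric odd element, substituted for $z^+$; and so on. The key algebraic fact I need is that $\varphi$ is ``almost'' an algebra homomorphism: a direct check on the elementary matrices $e_{ij}$ shows that $\varphi(e_{ij})\varphi(e_{kl}) = \pm\,\varphi(e_{ij}e_{kl})$, where the sign depends only on which ``block positions'' are involved (the sign is $-1$ precisely when exactly one of the two factors is a $z^-$-type elementary matrix in a way that is flipped). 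More cleanly: $\varphi$ agrees with conjugation by the diagonal matrix $D=\mathrm{diag}(1,1,-1)$ on all $e_{ij}$ except $e_{21}$ and $e_{31}$, where it differs by a sign; one checks that $\varphi(AB)=\varphi(A)\varphi(B)$ fails only by controlled signs, and these signs are precisely absorbed by the sign conventions in the $\mathbb{H}_n$-action defining $\tilde\varphi$.

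The cleanest route is therefore to verify, by a finite computation on a spanning set, the identity
\begin{equation*}
f\bigl(\varphi(u_1^+),\dots,\varphi(u_m^-),\varphi(v_1^+),\dots,\varphi(v_s^-)\bigr) = \pm\,\tilde{\varphi}(f)\bigl(u_1^+,\dots,u_m^-,v_1^+,\dots,v_s^-\bigr)
\end{equation*}
for every multilinear $*$-monomial $f$, with a global sign depending only on the multidegree of $f$ (the parity of the number of odd variables, or some similar invariant) and not on the monomial itself. Because $Id_2^*$ is generated by multilinear polynomials in characteristic zero, and both $Id_2^*(M_{1,2}(F))$ and the set $\{f:\tilde\varphi(f)\in Id_2^*(M_{1,2}(F))\}$ are $T_2^*$-ideals, it is enough to treat multilinear $f$, and by linearity enough to treat monomials. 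Given such a monomial identity, if $f\equiv 0$ on $M_{1,2}(F)$ then the left-hand side above vanishes for all choices of the homogeneous symmetric/skew arguments (since $\varphi$ maps each relevant homogeneous piece bijectively onto the appropriate homogeneous piece), hence the right-hand side vanishes for all such arguments, i.e. $\tilde\varphi(f)\equiv 0$.

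The main obstacle — and the one place real care is needed — is bookkeeping the signs in that displayed product formula and checking they are \emph{uniform} across monomials of a fixed multidegree, so that they factor out. Concretely: $\varphi$ sends $e_{21}\mapsto -e_{21}$ and $e_{31}\mapsto -e_{31}$ and fixes the other elementary matrices (up to the $e_{33}$ sign already built into the $osp$ structure), so in a product $e_{i_1 i_2}e_{i_2 i_3}\cdots$ the accumulated sign from applying $\varphi$ factor-by-factor versus applying it to the whole product is determined by how many indices equal the distinguished value along the path — a telescoping count that depends only on the endpoints, hence only on the homogeneous types of the variables, not on the interior structure of the monomial. I would organize this as: (i) reduce to multilinear monomials; (ii) prove the factorwise sign lemma for $\varphi$ on elementary matrices; (iii) match the resulting global sign against the definition of $\tilde\varphi$ via the $\mathbb{H}_n$-action, using that $\tilde\varphi$ corresponds to the group element $(\zeta,\dots,\zeta;\,\mathrm{id})$ that toggles $z^+\leftrightarrow z^-$; (iv) conclude. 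Steps (i), (iii), (iv) are routine; step (ii) is the heart of the argument but is a finite, mechanical check on the nine elementary matrices.
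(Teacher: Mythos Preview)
Your overall strategy is correct and is exactly what the paper has in mind: the paper introduces $\varphi$ and records the Remark about how it permutes the four homogeneous pieces precisely so that this argument goes through; its own ``proof'' is a one-line reference to \cite{CSV}, so your write-up is already more detailed than the paper's.

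That said, your execution of step~(ii) is more tangled than it needs to be, and two of the auxiliary claims are wrong as stated. First, $\varphi$ does \emph{not} agree with conjugation by $D=\mathrm{diag}(1,1,-1)$ away from $e_{21},e_{31}$: for instance $D e_{13} D^{-1}=-e_{13}$ while $\varphi(e_{13})=e_{13}$. Second, $\tilde{\varphi}$ is \emph{not} given by the $\mathbb{H}_n$-element $(\zeta,\dots,\zeta;\mathrm{id})$: that element sends $z_i^+\mapsto -z_i^+$ and $z_i^-\mapsto -z_i^-$, it does not swap them. Step~(iii) is therefore both incorrect and unnecessary --- $\tilde{\varphi}$ is simply the algebra automorphism swapping $z^+\leftrightarrow z^-$ and introduces no signs of its own.

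The clean replacement for your sign bookkeeping is this: in block form $\varphi\!\left(\begin{smallmatrix}X&Y\\Z&T\end{smallmatrix}\right)=\left(\begin{smallmatrix}X&Y\\-Z&T\end{smallmatrix}\right)$, and a two-line check on the four parity combinations gives $\varphi(AB)=(-1)^{|A||B|}\varphi(A)\varphi(B)$ for $\mathbb{Z}_2$-homogeneous $A,B$. Iterating, for any multilinear monomial $m$ in variables of parities $|a_1|,\dots,|a_n|$ one gets
\[
m\bigl(\varphi(a_1),\dots,\varphi(a_n)\bigr)=(-1)^{\binom{k}{2}}\,\varphi\bigl(m(a_1,\dots,a_n)\bigr),
\]
where $k$ is the number of odd-degree variables. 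Note the extra $\varphi$ on the right: your displayed identity $f(\varphi(\,\cdot\,))=\pm\,\tilde{\varphi}(f)(\,\cdot\,)$ is literally false when $k$ is odd. This does not matter for the conclusion, since $\varphi$ is a linear isomorphism and hence $m(\varphi(a))=0$ iff $m(a)=0$; summing over monomials (the sign $(-1)^{\binom{k}{2}}$ is constant across a multilinear polynomial) and using that $\varphi$ bijects the relevant homogeneous components finishes the proof exactly as you outline.
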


\begin{proof}
It can be proved as an extension of \cite[Corollary 3.6]{CSV}.
\end{proof}

\begin{remark}
From now on, we write the symbols $\tilde{}$ and $\tilde{\tilde{}}$ in order to indicate alternation on a specific set of variables.

For example, if we write $\tilde{x}_1 \tilde{\tilde{x}}_1 x_4 \tilde{x}_2 \tilde{x}_3 \tilde{\tilde{x}}_2$, its corresponding $*$-polynomial is the following: 
     $$\sum_{\sigma \in S_3, \tau \in S_2} sgn(\sigma) sgn(\tau) x_{\sigma(1)} x_{\tau(1)} x_4 x_{\sigma(2)} x_{\sigma(3)} x_{\tau(2)}.$$

\end{remark}

For any multipartition $(n_1, n_2, n_3, n_4) \vdash 3$, we define $W_{(n_1, n_2, n_3, n_4)}$ as the subspace of $F_3^3$, formed by multihomogeneous $*$-polynomials of total degree $n_1$ in the variables $y_1^+, y_2^+, y_3^+$, of total degree $n_2$ in the variables $y_1^-, y_2^-, y_3^-$, of total degree $n_3$ in the variables $z_1^+, z_2^+, z_3^+$ and of total degree $n_4$ in the variables $z_1^-, z_2^-, z_3^-$. 
We can act on $Id_2^*(M_{1,2}(F)) \cap W_{(n_1, n_2, n_3, n_4)}$ with $GL_{n_1} \times GL_{n_2} \times GL_{n_3} \times GL_{n_4}$ and let $Id_2^*(M_{1,2}(F)) \cap W_{(n_1, n_2, n_3, n_4)} \cong \bigoplus_{\langle \lambda \rangle \vdash \langle 3 \rangle} m_{\langle \lambda \rangle} W^{\langle \lambda \rangle}$ be the decomposition into irreducible submodules, where $W^{\langle \lambda \rangle}$ is the irreducible submodule corresponding to $\langle \lambda \rangle$ generated by the highest weight vector $f_{\langle \lambda \rangle}$.

Next we shall determine the exact value of $m_{\langle \lambda \rangle}$ as follows. According to Proposition \ref{prop:hwv}, any highest highest weight vector $f_{\langle \lambda \rangle}$  can be written uniquely as a linear combination of highest weight vectors $f_{T_{\langle \lambda \rangle}}$ corresponding to standard multitableaux. So, we write this linear combination explicitly and we evaluate it into $3 \times 3$ generic matrices with superinvolution by imposing that it must be a $*$-polynomial identity of the algebra $M_{1, 2}(F)$. We obtain a system where the coefficients of the linear combination are the unknowns. It can be completely solved by making evaluations of the generic matrices with superinvolution in $M_{1, 2}(F)$. Then we prove that we obtain $*$-identities of the algebra. If we obtain different highest weight vectors corresponding to the same multipartition, we check that they are linearly independent. The maximal number of linearly independent highest weight vectors will be the multiplicity of the corresponding $GL_{n_1} \times GL_{n_2} \times GL_{n_3} \times GL_{n_4}$-module.

Let us explain with an example. Consider the composition $(2, 1, 0, 0) \vdash 3$.
We have the multipartitions $\langle \lambda_1 \rangle = ((2), (1), \emptyset, \emptyset)$ and $\langle \lambda_2 \rangle = ((1, 1), (1), \emptyset, \emptyset)$.

For $\langle \lambda_1 \rangle = ((2), (1), \emptyset, \emptyset)$, we have the following standard multitableaux:
$$\left ( \ \begin{array}{|c|c|}
    \hline 1 & 2 \\
    \hline
\end{array} \ , \ \begin{array}{|c|}
 \hline   3  \\
     \hline
\end{array} \ , \ \emptyset \ , \ \emptyset \right ),
\left ( \ \begin{array}{|c|c|}
    \hline 1 & 3 \\
    \hline
\end{array} \ , \ \begin{array}{|c|}
 \hline   2  \\
     \hline
\end{array} \ , \ \emptyset \ , \ \emptyset \right )
\text{ and }
\left ( \ \begin{array}{|c|c|}
    \hline 2 & 3 \\
    \hline
\end{array} \ , \ \begin{array}{|c|}
 \hline   1  \\
     \hline
\end{array} \ , \ \emptyset \ , \ \emptyset \right ),$$

with highest weight vectors given by:
   $$f_{\bar{T}_{\langle \lambda_1 \rangle}} = (y_1^+)^2 y_1^-, \ \ \ f_{\bar{T}_{\langle \lambda_1 \rangle}} (23)^{-1} = y_1^+ y_1^- y_1^+ \ \ \ \text{and} \ \ \ f_{\bar{T}_{\langle \lambda_1 \rangle}} (123)^{-1} = y_1^- (y_1^+)^2.$$

For $\langle \lambda_2 \rangle = ((1, 1), (1), \emptyset, \emptyset)$, we have the following standard multitableaux:

$$\left ( \ \begin{array}{|c|}
    \hline 1 \\
    \hline 2 \\
    \hline
\end{array} \ , \ \begin{array}{|c|}
 \hline   3  \\
     \hline
\end{array} \ , \ \emptyset \ , \ \emptyset \right ),
\left ( \ \begin{array}{|c|}
    \hline 1 \\
    \hline 3 \\
    \hline
\end{array} \ , \ \begin{array}{|c|}
 \hline   2  \\
     \hline
\end{array} \ , \ \emptyset \ , \ \emptyset \right )
\text{ and }
\left ( \ \begin{array}{|c|}
    \hline 2 \\
    \hline 3 \\
    \hline
\end{array} \ , \ \begin{array}{|c|}
 \hline   1  \\
     \hline
\end{array} \ , \ \emptyset \ , \ \emptyset \right ),$$

with highest weight vectors given by:
    $$g_{\bar{T}_{\langle \lambda_2 \rangle}} = \tilde{y}_1^+ \tilde{y}_2^+ y_1^-, \ \ \ g_{\bar{T}_{\langle \lambda_2 \rangle}} (23)^{-1} = \tilde{y}_1^+ y_1^- \tilde{y}_2^+ \ \ \ \text{and} \ \ \ g_{\bar{T}_{\langle \lambda_2 \rangle}} (123)^{-1} = y_1^- \tilde{y}_1^+ \tilde{y}_2^+.$$

\noindent We consider the decomposition $Id_2^*(M_{1,2}(F)) \cap W_{(2, 1, 0, 0)} \cong \beta_1 W^{((2), (1), \emptyset, \emptyset)} \oplus \beta_2 W^{((1, 1), (1), \emptyset, \emptyset)}$ and now we compute the multiplicities $\beta_1$ and $\beta_2$.

The highest weight vector which generates $W^{((2), (1), \emptyset, \emptyset)}$ is $f_{\langle \lambda_1 \rangle} = \alpha_1 (y_1^+)^2 y_1^- + \alpha_2 y_1^+ y_1^- y_1^+ + \alpha_3 y_1^- (y_1^+)^2$ that belongs to $Id_2^*(M_{1,2}(F)),$ for some $\alpha_1, \alpha_2, \alpha_3 \in F$. Considering the substitutions $y_1^+ = e_{22} + e_{33}$ and $y_1^- = e_{23}$, we get $(\alpha_1 + \alpha_2 + \alpha_3) e_{23} = 0,$ which implies that $\alpha_3 = -\alpha_1 - \alpha_2.$
It follows that $f_{\langle \lambda_1 \rangle} = \alpha_1 [(y_1^+)^2, y_1^-] + \alpha_2 [y_1^+, y_1^- y_1^+]$ and, since $[(y_1^+)^2, y_1^-]$ and $[y_1^+, y_1^- y_1^+]$ are $*$-identities and they are linearly independent, we get $\beta_1 = 2.$

The highest weight vector which generates $W^{((1, 1), (1), \emptyset, \emptyset)}$ is $f_{\langle \lambda_2 \rangle} = \alpha_4 \tilde{y}_1^+ \tilde{y}_2^+ y_1^- + \alpha_5 \tilde{y}_1^+ y_1^- \tilde{y}_2^+ + \alpha_6 y_1^- \tilde{y}_1^+ \tilde{y}_2^+ \in Id_2^*(M_{1,2}(F)),$ for some $\alpha_4, \alpha_5, \alpha_6 \in F$. We observe that $\tilde{y}_1^+ \tilde{y}_2^+ y_1^- = [y_1^+, y_2^+] y_1^- \equiv 0$ and $y_1^- \tilde{y}_1^+ \tilde{y}_2^+ = y_1^- [y_1^+, y_2^+] \equiv 0,$ because $[y_1^+, y_2^+] \equiv 0$. 
It is clear that also $\tilde{y}_1^+ y_1^- \tilde{y}_2^+ \equiv 0.$ These three identities are also linearly independent, so $\beta_2 = 3.$
 
\noindent In conclusion:
    $$Id_2^* (M_{1, 2}(F)) \cap W_{(2, 1, 0, 0)} \cong 2W^{((2), (1), \emptyset, \emptyset)} \oplus 3W^{((1, 1), (1), \emptyset, \emptyset)}$$
with    
    $$f'_{\langle \lambda_1 \rangle} = [(y_1^+)^2, y_1^-], \ \ \ f''_{\langle \lambda_1 \rangle} = [y_1^+, y_1^- y_1^+] \ \ \ \text{and} \ \ \ f'_{\langle \lambda_2 \rangle} = [y_1^+, y_2^+] y_1^-, \ \ \ f''_{\langle \lambda_2 \rangle} = \tilde{y}_1^+ y_1^- \tilde{y}_2^+, \ \ \ f'''_{\langle \lambda_2 \rangle} = y_1^- [y_1^+, y_2^+].$$

Proceeding in the same way for the remaining compositions of $3$ and their corresponding multipartitions, we get the following.

\begin{theorem}\label{th:decompositions}
The following decompositions are valid:

 \begin{enumerate}
     
     \item $Id_2^* (M_{1, 2}(F)) \cap W_{(3, 0, 0, 0)} \cong 2W^{((2, 1), \emptyset, \emptyset, \emptyset)} \oplus W^{((1^3), \emptyset, \emptyset, \emptyset)}$ with highest weight vectors
     \medskip
          \begin{center}
              $f'_{((2, 1), \emptyset, \emptyset, \emptyset)} = [y_1^+, y_2^+] y_1^+$, \ \ \ $f''_{((2, 1), \emptyset, \emptyset, \emptyset)} = \tilde{y}_1^+ y_1^+ \tilde{y}_2^+ = [(y_1^+)^2, y_2^+],$
              \\$f_{((1^3), \emptyset, \emptyset, \emptyset)} = \tilde{y}_1^+ \tilde{y}_2^+ \tilde{y}_3^+ = St_3(y_1^+, y_2^+, y_3^+)$;
          \end{center}
        \medskip
      
     \item $Id_2^* (M_{1, 2}(F)) \cap W_{(0, 3, 0, 0)} \cong W^{(\emptyset, (2, 1), \emptyset, \emptyset)}$ with highest weight vectors
     \medskip
      \begin{center}
          $f_{(\emptyset, (2, 1), \emptyset, \emptyset)} = \tilde{y}_1^- y_1^- \tilde{y}_2^- = [(y_1^-)^2, y_2^-]$;
      \end{center}
      \medskip

     \item $Id_2^* (M_{1, 2}(F)) \cap W_{(2, 1, 0, 0)} \cong 2W^{((2), (1), \emptyset, \emptyset)} \oplus 3W^{((1^2), (1), \emptyset, \emptyset)}$ with highest weight vectors
     \medskip
      \begin{center}
      $f'_{((2), (1), \emptyset, \emptyset)} = [(y^+)^2, y^-]$, \ \ \ $f''_{((2), (1), \emptyset, \emptyset)} = [y^+, y^- y^+]$, 
      \\$f'_{((1^2), (1), \emptyset, \emptyset)} = [y_1^+, y_2^+] y^-$, \ \ \ $f''_{((1^2), (1), \emptyset, \emptyset)} = \tilde{y}_1^+ y^- \tilde{y}_2^+$, \ \ \ $f'''_{((1^2), (1), \emptyset, \emptyset)} = y^- [y_1^+, y_2^+]$;
      \end{center}
      \medskip

     \item $Id_2^* (M_{1, 2}(F)) \cap W_{(1, 2, 0, 0)} \cong 2W^{((1), (2), \emptyset, \emptyset)} \oplus 2W^{((1), (1^2), \emptyset, \emptyset)}$ with highest weight vectors
     \medskip
      \begin{center}
          $f'_{((1), (2), \emptyset, \emptyset)} = [y^+, (y^-)^2]$, \ \ \ $f''_{((1), (2), \emptyset, \emptyset)} = y^- [y^+, y^-]$,
          \\$f'_{((1), (1^2), \emptyset, \emptyset)} = [y^+, \tilde{y}_1^- \tilde{y}_2^-]$, \ \ \ $f''_{((1), (1^2), \emptyset, \emptyset)} = \tilde{y}_1^- [y^+, \tilde{y}_2^-]$;
      \end{center}
      \medskip

     \item $Id_2^* (M_{1, 2}(F)) \cap W_{(2, 0, 1, 0)} \cong 2W^{((1^2), \emptyset, (1), \emptyset)}$ with highest weight vectors
     \medskip
      \begin{center}
          $f'_{((1^2), \emptyset, (1), \emptyset)} = [y_1^+, y_2^+] z^+$, \ \ \ $f''_{((1^2), \emptyset, (1), \emptyset)} = z^+ [y_1^+, y_2^+]$;
      \end{center}
      \medskip
 
     \item $Id_2^* (M_{1, 2}(F)) \cap W_{(2, 0, 0, 1)} \cong 2W^{((1^2), \emptyset, \emptyset, (1))}$ with highest weight vectors
     \medskip
      \begin{center}
          $f'_{((1^2), \emptyset, \emptyset, (1))} = [y_1^+, y_2^+] z^-$, \ \ \ $f''_{((1^2), \emptyset, \emptyset, (1))} = z^- [y_1^+, y_2^+]$;
      \end{center}
      \medskip

     \item $Id_2^* (M_{1, 2}(F)) \cap W_{(1, 1, 1, 0)} \cong 2W^{((1), (1), (1), \emptyset)}$ with highest weight vectors
     \medskip
      \begin{center}
          $f'_{((1), (1), (1), \emptyset)} = [y^+, y^-] z^+$, 
          \ \ \ $f''_{((1), (1), (1), \emptyset)} = z^+ [y^+, y^-]$;
      \end{center}
      \medskip
      
     \item $Id_2^* (M_{1, 2}(F)) \cap W_{(1, 1, 0, 1)} \cong 2W^{((1), (1), \emptyset, (1))}$ with highest weight vectors
     \medskip
      \begin{center}
          $f'_{((1), (1), \emptyset, (1))} = [y^+, y^-] z^-$, 
          \ \ \ $f''_{((1), (1), \emptyset, (1))} = z^- [y^+, y^-]$;
      \end{center}
      \medskip

     \item $Id_2^* (M_{1, 2}(F)) \cap W_{(1, 0, 2, 0)} \cong W^{((1), \emptyset, (2), \emptyset)} \oplus W^{((1), \emptyset, (1^2), \emptyset)}$ with highest weight vectors
     \medskip
      \begin{center}
          $f_{((1), \emptyset, (2), \emptyset)} = [y^+, (z^+)^2]$,
          \\$f_{((1), \emptyset, (1^2), \emptyset)} = [y^+, \tilde{z}_1^+ \tilde{z}_2^+]$;
      \end{center}
      \medskip
      
     \item $Id_2^* (M_{1, 2}(F)) \cap W_{(1, 0, 0, 2)} \cong W^{((1), \emptyset, \emptyset, (2))} \oplus W^{((1), \emptyset, \emptyset, (1^2))}$ with highest weight vectors
     \medskip
      \begin{center}
          $f_{((1), \emptyset, \emptyset, (2))} = [y^+, (z^-)^2]$,
          \\$f_{((1), \emptyset, \emptyset, (1^2))} = [y^+, \tilde{z}_1^- \tilde{z}_2^-]$;
      \end{center}
      \medskip

     \item $Id_2^* (M_{1, 2}(F)) \cap W_{(1, 0, 1, 1)} \cong 2W^{((1), \emptyset, (1), (1))}$ with highest weight vectors
     \medskip
      \begin{center}
          $f'_{((1), \emptyset, (1), (1))} = [y^+, z^+ z^-]$, 
          \ \ \ $f''_{((1), \emptyset, (1), (1))} = [y^+, z^- z^+]$;
      \end{center}
      \medskip
      
     \item $Id_2^* (M_{1, 2}(F)) \cap W_{(0, 0, 3, 0)} \cong W^{(\emptyset, \emptyset, (3), \emptyset)} \oplus W^{(\emptyset, \emptyset, (1^3), \emptyset)}$ with highest weight vectors
     \medskip
      \begin{center}
          $f_{(\emptyset, \emptyset, (3), \emptyset)} = (z^+)^3$,
          \\$f_{(\emptyset, \emptyset, (1^3), \emptyset)} = \tilde{z}_1^+ \tilde{z}_2^+ \tilde{z}_3^+ = St_3(z_1^+, z_2^+, z_3^+)$;
      \end{center}
      \medskip
      
     \item $Id_2^* (M_{1, 2}(F)) \cap W_{(0, 0, 0, 3)} \cong W^{(\emptyset, \emptyset, \emptyset, (3))} \oplus W^{(\emptyset, \emptyset, \emptyset, (1^3))}$ with highest weight vectors
     \medskip
      \begin{center}
          $f_{(\emptyset, \emptyset, \emptyset, (3))} = (z^-)^3$,
          \\$f_{(\emptyset, \emptyset, \emptyset, (1^3))}  = \tilde{z}_1^- \tilde{z}_2^- \tilde{z}_3^- = St_3(z_1^-, z_2^-, z_3^-)$;
      \end{center}
      \medskip
      
     \item $Id_2^* (M_{1, 2}(F)) \cap W_{(0, 2, 1, 0)} \cong W^{(\emptyset, (2), (1), \emptyset)} \oplus W^{(\emptyset, (1^2), (1), \emptyset)}$ with highest weight vectors
     \medskip
      \begin{center}
          $f_{(\emptyset, (2), (1), \emptyset)} = y^- z^+ y^-$,
          \\$f_{(\emptyset, (1^2), (1), \emptyset)} = \tilde{y}_1^- z^+ \tilde{y}_2^-$;
      \end{center}
      \medskip
      
     \item $Id_2^* (M_{1, 2}(F)) \cap W_{(0, 2, 0, 1)} \cong W^{(\emptyset, (2), \emptyset, (1))} \oplus W^{(\emptyset, (1^2), \emptyset, (1))}$ with highest weight vectors
     \medskip
      \begin{center}
          $f_{(\emptyset, (2), \emptyset, (1))} = y^- z^- y^-$,
          \\$f_{(\emptyset, (1^2), \emptyset, (1))} = \tilde{y}_1^- z^- \tilde{y}_2^-$;
      \end{center}
      \medskip
      
     \item $Id_2^* (M_{1, 2}(F)) \cap W_{(0, 1, 2, 0)} \cong 2W^{(\emptyset, (1), (1^2), \emptyset)}$ with highest weight vectors
     \medskip
      \begin{center}
          $f'_{(\emptyset, (1), (1^2), \emptyset)} = \tilde{z}_1^+ y^- \tilde{z}_2^+$, \ \ \ $f''_{(\emptyset, (1), (1^2), \emptyset)} = [y^-, \tilde{z}_1^+ \tilde{z}_2^+]$;
      \end{center}
      \medskip
      
     \item $Id_2^* (M_{1, 2}(F)) \cap W_{(0, 1, 0, 2)} \cong 2W^{(\emptyset, (1), \emptyset, (1^2))}$ with highest weight vectors
     \medskip
      \begin{center}
          $f'_{(\emptyset, (1), \emptyset, (1^2))} = \tilde{z}_1^- y^- \tilde{z}_2^-$, \ \ \ $f''_{(\emptyset, (1), \emptyset, (1^2))} = [y^-, \tilde{z}_1^- \tilde{z}_2^-]$;
      \end{center}
      \medskip
      
     \item $Id_2^* (M_{1, 2}(F)) \cap W_{(0, 0, 2, 1)} \cong W^{(\emptyset, \emptyset, (2), (1))} \oplus W^{(\emptyset, \emptyset, (1^2), (1))}$ with highest weight vectors
     \medskip
      \begin{center}
          $f_{(\emptyset, \emptyset, (2), (1))} = (z^+)^2 z^- - z^+ z^- z^+ + z^- (z^+)^2$, 
          \\$f_{(\emptyset, \emptyset, (1^2), (1))} = \tilde{z}_1^+ \tilde{z}_2^+ z^- + \tilde{z}_1^+ z^- \tilde{z}_2^+ + z^- \tilde{z}_1^+ \tilde{z}_2^+$;
      \end{center}
      \medskip
      
     \item $Id_2^* (M_{1, 2}(F)) \cap W_{(0, 0, 1, 2)} \cong W^{(\emptyset, \emptyset, (1), (2))} \oplus W^{(\emptyset, \emptyset, (1), (1^2))}$ with highest weight vectors
     \medskip
      \begin{center}
          $f_{(\emptyset, \emptyset, (1), (2))} = (z^-)^2 z^+ - z^- z^+ z^- + z^+ (z^-)^2$, 
          \\$f_{(\emptyset, \emptyset, (1), (1^2))} = \tilde{z}_1^- \tilde{z}_2^- z^+ + \tilde{z}_1^- z^+ \tilde{z}_2^- + z^+ \tilde{z}_1^- \tilde{z}_2^-$;
      \end{center}
      \medskip
      
     \item $Id_2^* (M_{1, 2}(F)) \cap 2W_{(0, 1, 1, 1)} \cong W^{(\emptyset, (1), (1), (1))}$ with highest weight vectors
     \medskip
      \begin{center}
          $f'_{(\emptyset, (1), (1), (1))} = z^+ y^- z^- + z^- y^- z^+$, \ \ \ $f''_{(\emptyset, (1), (1), (1))} = [y^-, z^+ \circ z^-]$.
      \end{center}

 \end{enumerate}
 
\end{theorem}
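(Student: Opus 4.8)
**Proof plan for Theorem 4.?? (the decompositions of $Id_2^*(M_{1,2}(F)) \cap W_{(n_1,n_2,n_3,n_4)}$ for all compositions of $3$).**

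The plan is to follow exactly the computational scheme illustrated in the worked example for the composition $(2,1,0,0)$, applied systematically to each of the twenty compositions of $3$. For a fixed composition $\langle n \rangle = (n_1,n_2,n_3,n_4) \vdash 3$, I would first list all multipartitions $\langle \lambda \rangle \vdash \langle n \rangle$ subject to the height constraints $h(\lambda(1)) \le 2$, $h(\lambda(2)) \le 3$, $h(\lambda(3)) \le 2$, $h(\lambda(4)) \le 2$ coming from \eqref{eqn:cocarattere}, since outside these bounds the multiplicity of $\chi_{\langle\lambda\rangle}$ in the cocharacter of $M_{1,2}(F)$ vanishes and hence (by the reasoning behind Theorem~\ref{th:cocharacter} and Theorem~\ref{th:hwv}) every highest weight vector of that shape is a $*$-identity. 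For each surviving $\langle\lambda\rangle$ I would write down the standard multitableaux, form the associated highest weight vectors $f_{T_{\langle\lambda\rangle}}$ via \eqref{eqn:highestweightvector} and the right $S_n$-action, and express the generic highest weight vector $f_{\langle\lambda\rangle}$ as an unknown linear combination $\sum_i \alpha_i f_{T_{\langle\lambda\rangle}^{(i)}}$ as guaranteed by Proposition~\ref{prop:hwv}.

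The core of each case is then to determine which such linear combinations lie in $Id_2^*(M_{1,2}(F))$. I would impose $f_{\langle\lambda\rangle}(u_1,\dots) = 0$ on the explicit basis elements of $(M_{1,2}(F))_0^+$, $(M_{1,2}(F))_0^-$, $(M_{1,2}(F))_1^+$, $(M_{1,2}(F))_1^-$ listed in Section~3, obtaining a homogeneous linear system in the $\alpha_i$. Solving it pins down the space of $\alpha$'s giving identities; its dimension is the multiplicity $m_{\langle\lambda\rangle}$ in the stated decomposition, and a basis of that solution space provides the explicit highest weight vectors recorded in the theorem. Two standard simplifications cut the work drastically: first, Remark~\ref{rmk:Id(y)} (equivalently Remark~\ref{rmk:Id}) handles everything built from degree-zero variables since $((M_{1,2}(F))_0, osp) \cong (F \oplus M_2(F), \phi_s)$ has all its identities generated by $[y^+, y]$; second, Theorem~\ref{th:id} shows that the map $\tilde\varphi$ swapping $z^+ \leftrightarrow z^-$ (and fixing $y^+$, sending $y^+ \mapsto y^-$ — here one reads the intended $\tilde\varphi(y^-) = y^-$) preserves $Id_2^*(M_{1,2}(F))$, so that the cases $(n_1,n_2,n_3,n_4)$ and $(n_1,n_2,n_4,n_3)$ are isomorphic and only one of each such pair needs to be computed. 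This is why items (5)/(6), (7)/(8), (9)/(10), (14)/(15), (16)/(17), (18)/(19) appear in mirror-image form.

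For the small cases I would argue directly: e.g. for $(0,0,3,0)$ one checks that $(z^+)^3 = 0$ and $St_3(z_1^+,z_2^+,z_3^+) = 0$ on $M_{1,2}(F)$ because $\dim (M_{1,2}(F))_1^+ = 2$ forces the standard polynomial of degree $3$ to vanish, while the shape $(3)$ vanishes by a square-zero type computation on the two-dimensional space $\mathrm{span}\{e_{12}-e_{31}, e_{13}+e_{21}\}$; the remaining partition $(2,1)$ of the odd-symmetric slot exceeds $h \le 2$ only if it were $(1^3)$, so one must also verify that $(2,1)$ genuinely contributes nothing beyond what is listed. After computing each multiplicity, whenever a shape contributes multiplicity $\ge 2$ I would exhibit the stated highest weight vectors explicitly and verify their linear independence modulo $Id_2^*(M_{1,2}(F))$ by a further evaluation, as done for $\beta_1 = 2$, $\beta_2 = 3$ in the example. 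The main obstacle is purely the volume of bookkeeping: roughly twenty compositions, each with several multipartitions and up to three standard multitableaux apiece, yielding dozens of small linear systems to set up and solve, together with the independence checks. There is no conceptual difficulty beyond keeping the sign conventions of the $St_r$ alternators and the $\mathbb{H}_n$-action straight; the reductions via Remark~\ref{rmk:Id(y)} and Theorem~\ref{th:id} are what make the enumeration finite and manageable, and the results of Theorem~\ref{th:hwv} guarantee that counting linearly independent non-identically-zero highest weight vectors is exactly the right bookkeeping to perform.
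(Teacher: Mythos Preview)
Your proposal is correct and follows essentially the same approach as the paper: the paper's own justification consists precisely of the worked example for $(2,1,0,0)$ followed by the sentence ``Proceeding in the same way for the remaining compositions of $3$ and their corresponding multipartitions, we get the following,'' so the systematic scheme you describe---list multipartitions, write the generic highest weight vector as a linear combination indexed by standard multitableaux (Proposition~\ref{prop:hwv}), evaluate on the explicit bases of the four homogeneous components, solve the resulting linear system, and check independence---is exactly what is intended. Your use of Theorem~\ref{th:id} to pair off the cases with $n_3$ and $n_4$ swapped, and of Remark~\ref{rmk:Id(y)} to dispose of the purely even-variable compositions, are legitimate shortcuts that the paper exploits elsewhere but does not spell out here; they only reduce bookkeeping and do not alter the method. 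One small correction: when you check independence of the listed highest weight vectors for a given shape, you want linear independence \emph{in the free algebra} (they are all already in $Id_2^*(M_{1,2}(F))$), not independence modulo the ideal.
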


Now, we consider the following set:

\begin{center}
    $\mathcal{I} = \{[y^+, y], \ \ \ y_1^- z^+ y_2^-, \ \ \ y_1^- z^- y_2^-, \ \ \ z_1^+ z_2^+ z_3^+ + z_2^+ z_3^+ z_1^+ + z_3^+ z_1^+ z_2^+, \ \ \ z_1^- z_2^- z_3^- + z_2^- z_3^- z_1^- + z_3^- z_1^- z_2^-, \ \ \ \tilde{z}_1^+ y^- \tilde{z}_2^+, \ \ \ \tilde{z}_1^- y^- \tilde{z}_2^-, \ \ \ z_1^+ z_2^+ z^- - z_2^+ z^- z_1^+ + z^- z_1^+ z_2^+, \ \ \ z_1^- z_2^- z^+ - z_2^- z^+ z_1^- + z^+ z_1^- z_2^-, \ \ \ z^+y^-z^- + z^-y^-z^+\}$.
\end{center}

It is easy to prove that each $*$-polynomial of this set is a $*$-identity of $M_{1, 2}(F)$, then $\mathcal{I} \subseteq Id_2^*(M_{1, 2}(F))$.

\begin{theorem}\label{id}
Let $f \in Id_2^*(M_{1, 2}(F))$ of degree $\leq 3$. Then $f$ is a consequence of $*$-polynomials in the set $\mathcal{I}$.
\end{theorem}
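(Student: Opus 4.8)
The plan is to reduce the statement to a finite verification on the highest weight vectors produced by Theorem~\ref{th:decompositions}. Since $F$ is infinite, we may first replace $f$ by its multihomogeneous components and so assume $f$ is multihomogeneous of some multidegree $(n_1,n_2,n_3,n_4)$ with $n_1+n_2+n_3+n_4\le 3$. The cases $\deg f\le 1$ are vacuous; if $\deg f=2$, then by Remark~\ref{rmk:Id(2)} $f$ is a scalar multiple of $[y_1^+,y_2^+]$ or of $[y^+,y^-]$, each obtained from $[y^+,y]\in\mathcal I$ by substituting a suitable variable for $y$. So the heart of the matter is $\deg f=3$, where $f\in Id_2^*(M_{1,2}(F))\cap W_{\langle n\rangle}$ for some $\langle n\rangle\vdash 3$.

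For this case the key observation is that, by complete reducibility, $Id_2^*(M_{1,2}(F))\cap W_{\langle n\rangle}$ coincides with the $GL_{n_1}\times GL_{n_2}\times GL_{n_3}\times GL_{n_4}$-submodule of $F_3^3$ generated by the finitely many highest weight vectors listed for $\langle n\rangle$ in Theorem~\ref{th:decompositions} (these span the corresponding isotypic components). Since every element of the $GL$-module generated by a $*$-polynomial $g$ is an $F$-linear combination of images of $g$ under homogeneous linear substitutions of the variables, it lies in $\langle g\rangle_{T_2^*}$; hence it suffices to show that each highest weight vector appearing in Theorem~\ref{th:decompositions} is a consequence of $\mathcal I$. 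The bulk of these are of the form $u\,[a,b]\,v$, where $u,v$ are $*$-monomials, $b$ has even degree, and $a$ is a $*$-symmetric element of even degree (such as $y_i^+$, $(y^+)^2$, $(y^-)^2$, $\tilde z_1^{\pm}\tilde z_2^{\pm}$ or $z^+\circ z^-$ — a direct check shows each of these last three is $*$-symmetric of even degree), or else of the form $[y^+,e]$ with $e$ of even degree. Since modulo $\langle[y^+,y]\rangle_{T_2^*}$ every $*$-symmetric element of even degree is central, all such highest weight vectors are consequences of $[y^+,y]\in\mathcal I$; for instance $St_3(y_1^+,y_2^+,y_3^+)$ collapses to $0$ because the $y_i^+$ then commute, and $[y^-,\tilde z_1^+\tilde z_2^+]$ is a consequence of $[y^+,y]$ because $\tilde z_1^+\tilde z_2^+$ is $*$-symmetric even.

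The highest weight vectors not of that shape are $(z^{\pm})^3$ and $St_3(z_1^{\pm},z_2^{\pm},z_3^{\pm})$ in $W_{(0,0,3,0)},W_{(0,0,0,3)}$; $y^-z^{\pm}y^-$ and $\tilde y_1^-z^{\pm}\tilde y_2^-$ in $W_{(0,2,1,0)},W_{(0,2,0,1)}$; the vector $\tilde z_1^{\pm}y^-\tilde z_2^{\pm}$ in $W_{(0,1,2,0)},W_{(0,1,0,2)}$; the two vectors in $W_{(0,0,2,1)},W_{(0,0,1,2)}$; and $z^+y^-z^-+z^-y^-z^+$ in $W_{(0,1,1,1)}$. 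For these I would check directly that each is obtained from a member of $\mathcal I$ by a specialization, a relabeling, or an $F$-linear combination of relabelings: $(z^{\pm})^3$ comes from identifying all three variables in the cyclic polynomial $z_1^{\pm}z_2^{\pm}z_3^{\pm}+z_2^{\pm}z_3^{\pm}z_1^{\pm}+z_3^{\pm}z_1^{\pm}z_2^{\pm}\in\mathcal I$, and $St_3(z_1^{\pm},z_2^{\pm},z_3^{\pm})$ is the difference of that polynomial and the one obtained from it by interchanging $z_2^{\pm},z_3^{\pm}$; $y^-z^{\pm}y^-$ comes from identifying the two skew variables in $y_1^-z^{\pm}y_2^-\in\mathcal I$, and $\tilde y_1^-z^{\pm}\tilde y_2^-$ is a difference of two relabelings of it; $\tilde z_1^{\pm}y^-\tilde z_2^{\pm}$ is in $\mathcal I$ outright; the two vectors of $W_{(0,0,2,1)}$ arise analogously from $z_1^+z_2^+z^--z_2^+z^-z_1^++z^-z_1^+z_2^+\in\mathcal I$ (and similarly for $W_{(0,0,1,2)}$ with the roles of $z^+$ and $z^-$ exchanged); and $z^+y^-z^-+z^-y^-z^+$ is a member of $\mathcal I$. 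Since this exhausts the list in Theorem~\ref{th:decompositions}, the statement follows. I expect the main obstacle to be, on the one hand, the faithful bookkeeping of these identifications and, on the other hand and more fundamentally, that everything rests on the correctness and completeness of Theorem~\ref{th:decompositions}; the one genuinely non-mechanical point in the present argument is the need to recognize the composite elements $\tilde z_1^{\pm}\tilde z_2^{\pm}$ and $z^+\circ z^-$ as $*$-symmetric of even degree, since this is precisely what lets the single identity $[y^+,y]$ absorb the great majority of the highest weight vectors.
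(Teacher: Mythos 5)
Your proposal is correct and follows essentially the same route as the paper: reduce to multihomogeneous components, dispose of degrees $\le 2$ via Remark~\ref{rmk:Id(2)}, and then verify that each highest weight vector listed in Theorem~\ref{th:decompositions} lies in $\langle\mathcal I\rangle_{T_2^*}$, your only addition being the (correct and worth making explicit) observation that the $GL$-orbit of a generator sits inside the $T_2^*$-ideal it generates. One phrasing to tighten: modulo $\langle[y^+,y]\rangle_{T_2^*}$ a $*$-symmetric even element commutes only with \emph{even} elements, not with everything (e.g.\ $[y^+,z^+]$ is not even an identity of $M_{1,2}(F)$), but since you always pair such an element against an even $b$ this does not affect your argument.
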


\begin{proof}
It is obvious that $M_{1, 2}(F)$ does not satisfy any $*$-identity of degree $1$. If $f$ has degree $2$, by Remark \ref{rmk:Id(2)}, any $*$-identity follows from $[y_1^+, y_2^+]$ and $[y^+, y^-]$ which are consequences of $[y^+, y]$ and we are done in this case. Now assume that $f$ has degree $3$ and that $f$ is multihomogeneous, as we may. So, $f \in W_{(n_1, n_2, n_3, n_4)}$, for some composition $(n_1, n_2, n_3, n_4)$ of $3$. In order to complete the proof we shall show that any highest weight vector corresponding to the multipartition $(n_1, n_2, n_3, n_4)$ given in Theorem \ref{th:decompositions} is a consequence of the $*$-polynomials in $\mathcal{I}$. 
We enumerate such highest weight vectors in this way:

   \begin{multicols}{2}
    \begin{enumerate}
        \item[(a)] $[y^+, y]$,
        \item[(b)] $z_1^+ z_2^+ z_3^+ + z_2^+ z_3^+ z_1^+ + z_3^+ z_1^+ z_2^+$,
        \item[(c)] $z_1^- z_2^- z_3^- + z_2^- z_3^- z_1^- + z_3^- z_1^- z_2^-$,
        \item[(d)]  $y_1^- z^+ y_2^-$,
        \item[(e)] $y_1^- z^- y_2^-$,
        \item[(f)] $\tilde{z}_1^+ y^- \tilde{z}_2^+$,
        \item[(g)] $\tilde{z}_1^- y^- \tilde{z}_2^-$,
        \item[(h)] $z_1^+ z_2^+ z^- - z_2^+ z^- z_1^+ + z^- z_1^+ z_2^+$,
        \item[(i)] $z_1^- z_2^- z^+ - z_2^- z^+ z_1^- + z^+ z_1^- z_2^-$,
        \item[(j)] $z^+y^-z^- + z^-y^-z^+$.
    \end{enumerate}
    \end{multicols}

Clearly, if $f$ is a $*$-identity in variables of degree zero, by Remark \ref{rmk:Id(y)}, it is a consequence of $[y^+, y]$. Hence, the highest weight vectors in (1), (2), (3) and (4) follow from (a). Also the highest weight vectors in (5), (6), (7) and (8) follow from (a). Moreover, since the product of two variables of degree one is a variable of degree zero, also the highest weight vectors in (9), (10) and (11) follow from (a). It is immediate to see that the highest weight vectors in (12), (13), (14) and (15) follow from (b), (c), (d) and (e), respectively. In (16) and (17) we find the $*$-polynomials (f) and (g) and consequences of (a). The highest weight vectors in (18) and (19) follow from (h) and (i), respectively. Finally, the $*$-polynomials in (20) follow from (j) and (a).
\end{proof}

\section{On the $*$-cocharacter of $M_{1,2}(F)$}

In this section we determine the decomposition of the $*$-cocharacter of $M_{1, 2}(F)$ with multiplicity $m_{\langle \lambda \rangle} \neq 0$.

We recall that

\begin{equation}\label{eqn:ultimococarattere}
   \chi_n^{*}(M_{1,2}(F)) = \sum_{\substack{\langle \lambda \rangle \vdash n, \\ h(\lambda(1)) \leq 2, \ h(\lambda(2)) \leq 3, \\ h(\lambda(3)) \leq 2, \ h(\lambda(4)) \leq 2}} m_{\langle \lambda \rangle} \chi_ {\langle \lambda \rangle},
\end{equation}

where $m_{\langle \lambda \rangle} \geq 0$ is the multiplicity corresponding to the irreducible $\mathbb{H}_n$-character $\chi_{\langle \lambda \rangle}$, with $\langle \lambda \rangle = (\lambda(1), \dots, \lambda(4)) \vdash n$.

In \cite[Theorem 4.1b]{DG}, Drensky and Giambruno determined the decomposition of the cocharacter of the algebra $M_2(F)$ endowed with the simplectic involution. They proved the following result.

\begin{theorem}\label{th:s}
The $\mathbb{Z}_2 \sim S_n$-cocharacter  of $M_2(F)$ endowed with the symplectic involution $s$ is 
  \begin{equation}
     \chi_n(M_2(F), s) = \sum_{\substack{n = n_1 + n_2, \\ \lambda \vdash n_1, \ \mu \vdash n_2}} a_{\lambda, \mu} \chi_{\lambda, \mu},
  \end{equation}
where $\lambda = (\lambda_1)$, $\mu = (\mu_1, \mu_2, \mu_3)$ and $a_{\lambda, \mu} = 1$.
\end{theorem}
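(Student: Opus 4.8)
The plan is to use the known generators of $Id(M_2(F),s)$ to reduce the statement to a computation purely with trace-zero matrices, and then to pin down the multiplicities with the Cayley--Hamilton identity together with the $GL$-representation machinery of Section~2. First I would record the concrete form of the grading by $s$: the symmetric elements of $M_2(F)$ are exactly the scalar matrices $F\,I_2$ and the skew elements are the trace-zero matrices $\mathfrak{sl}_2(F)$, so $\dim (M_2(F))^{+}=1$ and $\dim (M_2(F))^{-}=3$. By the height bound for the involution cocharacter (the analogue of \eqref{eqn:cocarattere2}; see \cite{DG}), every pair $(\lambda,\mu)$ occurring in $\chi_n(M_2(F),s)$ satisfies $h(\lambda)\le 1$ and $h(\mu)\le 3$, hence necessarily $\lambda=(\lambda_1)$ and $\mu=(\mu_1,\mu_2,\mu_3)$; this already produces the shape of the summands. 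Moreover, by \cite{L} (the result recalled in Remark~\ref{rmk:Id(y)}) the $T^{\phi}$-ideal $Id(M_2(F),s)$ is generated by $[y^{+},x]$, so in the relatively free algebra $\mathcal R$ of $(M_2(F),s)$ the symmetric variables are central and $\mathcal R\cong F[Y]\otimes\mathcal R_0$ as bigraded algebras, where $\mathcal R_0$ is the subalgebra generated by the skew variables, i.e. the relatively free algebra of $M_2(F)$ under substitutions by trace-zero matrices. Passing to characters, in bidegree $(n_1,n_2)$ the group $S_{n_1}$ acts trivially and $a_{(\lambda_1),\mu}$ equals the multiplicity $m_\mu$ of $\chi_\mu$ in the $n_2$-th ordinary cocharacter of $\mathcal R_0$; so it remains to prove that this cocharacter is $\sum_{h(\mu)\le 3}\chi_\mu$ with every multiplicity equal to $1$.

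Next I would exploit the structural identity $AB+BA=\mathrm{tr}(AB)\,I_2$, valid for trace-zero $A,B\in M_2(F)$ (Cayley--Hamilton plus polarization). In $\mathcal R_0$ this says that $c_{ij}:=z_iz_j+z_jz_i$ is central, a relation which is itself a consequence of $[y^{+},x]$ since $z_iz_j+z_jz_i$ is a symmetric element of the free $*$-superalgebra; together with $St_4\equiv 0$ on $M_2(F)$, these relations let one rewrite every word in the $z$'s, modulo the central subalgebra $C=F[c_{ij}]$, as a $C$-linear combination of strictly increasing monomials $z_{i_1}z_{i_2}\cdots z_{i_r}$ with $r\le 3$ --- a PBW-type normal form for $\mathcal R_0$. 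For the lower bound $m_\mu\ge 1$ when $h(\mu)\le 3$ I would evaluate the initial highest weight vector $f_{\bar T_\mu}=\prod_{i=1}^{\mu_1}St_{h_i(\mu)}(z_1,\dots,z_{h_i(\mu)})$ of \eqref{eqn:highestweightvector}: since $St_1(z_1)=z_1$, $St_2(z_1,z_2)=[z_1,z_2]$, and $St_3(z_1,z_2,z_3)$ is a nonzero scalar-valued alternating form on $\mathfrak{sl}_2(F)$ (for instance $St_3(\mathrm{diag}(1,-1),e_{12},e_{21})=3I_2$), a suitable choice of the $z_k$'s among $\mathrm{diag}(1,-1),e_{12},e_{21}$ yields a nonzero value, so $f_{\bar T_\mu}\notin Id_2^{*}(\mathcal R_0)$.

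Finally, for the upper bound $m_\mu\le 1$ I would appeal to Theorem~\ref{th:hwv} and Proposition~\ref{prop:hwv}: the multiplicity equals the maximal number of linearly independent (mod $Id$) highest weight vectors, each of which is a combination of the $f_{T_\mu}$ over standard tableaux; the normal form above shows that modulo the identities all of these collapse onto a single $C$-multiple of one increasing monomial of length $\le 3$. Equivalently, matching the Hilbert series of $\mathcal R_0$ read off from the $C$-basis with $\sum_{h(\mu)\le 3}s_\mu$ forces every multiplicity down to $1$. Combining the two bounds gives $m_\mu=1$ for every $\mu=(\mu_1,\mu_2,\mu_3)$, hence $a_{\lambda,\mu}=1$, as claimed.

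The delicate point --- the main obstacle --- is precisely this upper bound: one must know that $\mathcal R_0$ carries \emph{no} relations beyond the consequences of $[y^{+},x]$, i.e. that the increasing monomials of length $\le 3$ are $C$-linearly independent and that the central ring $C$ is itself understood. This is essentially the Procesi-type description of the trace ring of generic trace-zero $2\times 2$ matrices, and it is the substantive content of \cite[Theorem~4.1b]{DG}; everything else in the argument is a bookkeeping translation through Remark~\ref{rmk:Id(y)} and the standard $GL$-representation theory of Section~2.
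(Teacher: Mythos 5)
This statement is not proved in the paper at all: it is imported verbatim from Drensky--Giambruno, \cite[Theorem~4.1b]{DG}, and the paper's ``proof'' is the citation. So there is no internal argument to compare yours against; I can only assess your reconstruction on its own merits.

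Your reduction is correct and matches how one would (and how \cite{DG} essentially does) prove the result: the identification of $(M_2(F))^+=FI_2$ and $(M_2(F))^-=\mathfrak{sl}_2(F)$, the height bounds $h(\lambda)\le 1$, $h(\mu)\le 3$, the use of Levchenko's generator $[y^+,x]$ to split off the symmetric variables as a central polynomial ring and reduce to the ordinary cocharacter of trace-zero substitutions, the Clifford-type relation $AB+BA=\mathrm{tr}(AB)I_2$, and the lower bound via evaluation of the initial highest weight vectors (your computation $St_3(\mathrm{diag}(1,-1),e_{12},e_{21})=3I_2$ is right, and the column factors $St_1$, $St_2$, $St_3$ can indeed be made simultaneously nonzero/invertible). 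The gap is exactly where you say it is, but you should be clearer that as written the argument is circular: the upper bound $m_\mu\le 1$ --- i.e.\ that the $C$-spanning set you construct carries no further relations, equivalently that the Hilbert series of the algebra of generic trace-zero $2\times2$ matrices equals $\sum_{h(\mu)\le 3}S_\mu$ --- is deferred to ``the substantive content of \cite[Theorem~4.1b]{DG}'', which is precisely the theorem being proved. A self-contained proof would have to either compute that Hilbert series directly (via the known structure of the trace ring of two or three generic $2\times2$ matrices), or show by an explicit straightening argument that for each standard tableau $T_\mu$ the vector $f_{T_\mu}$ reduces modulo the consequences of $[y^+,x]$ and $St_4$ to a fixed scalar multiple of a single normal-form element, so that all the $f_{T_\mu}$ become proportional. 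Two smaller points: your normal form is not quite minimal, since $St_3(z_i,z_j,z_k)$ is itself central, so length-$3$ increasing words reduce further to central elements times words of length $\le 1$ --- this does not affect the spanning claim but matters if you try to run the counting argument honestly; and the notation $Id_2^*(\mathcal R_0)$ is out of place here, as you are in the ordinary involution setting, not the superinvolution one.
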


An immediate consequence of Theorem \ref{th:s} and Remark \ref{rmk:Id} is the following.

\begin{proposition}\label{prop:12}
If $\langle \lambda \rangle = (\lambda(1), \lambda(2), \emptyset, \emptyset)$ in (\ref{eqn:ultimococarattere}), then $m_{\langle \lambda \rangle} \neq 0$ if and only if $h(\lambda(1)) \leq 1$.
\end{proposition}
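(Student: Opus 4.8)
The plan is to reduce the statement to the two-variable-type case and then invoke the classification results already available. First observe that a multipartition of the form $\langle \lambda \rangle = (\lambda(1), \lambda(2), \emptyset, \emptyset)$ corresponds to a multilinear $*$-polynomial involving only variables of degree zero, i.e. only $y_i^+$'s and $y_i^-$'s. By the identification $((M_{1,2}(F))_0, osp) \cong (F \oplus M_2(F), \phi_s)$ and Remark \ref{rmk:Id}, the $T_2^*$-ideal $Id_2^*(M_{1,2}(F))$ intersected with the subspace of polynomials in degree-zero variables coincides with $Id(M_2(F), s)$ (after the obvious renaming of symmetric/skew variables). Hence the relevant piece of the $\mathbb{H}_n$-cocharacter of $M_{1,2}(F)$, namely the part indexed by multipartitions with $\lambda(3) = \lambda(4) = \emptyset$, is governed by the $\mathbb{Z}_2 \wr S_n$-cocharacter of $(M_2(F), s)$.

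Next I would make the correspondence between indices precise: a multipartition $\langle \lambda \rangle = (\lambda(1), \lambda(2), \emptyset, \emptyset) \vdash n$ (with $n = n_1 + n_2$, $\lambda(1) \vdash n_1$ for the even-symmetric part, $\lambda(2) \vdash n_2$ for the even-skew part) matches exactly the pair $(\lambda, \mu)$ appearing in Theorem \ref{th:s}, with $\lambda = \lambda(1)$ playing the role of the symmetric-variable partition and $\mu = \lambda(2)$ the role of the skew-variable partition. Under this matching, $m_{\langle \lambda \rangle}$ equals the multiplicity $a_{\lambda,\mu}$ in the decomposition of $\chi_n(M_2(F), s)$. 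The content of Theorem \ref{th:s} is then twofold: $a_{\lambda,\mu} \neq 0$ forces $\lambda = (\lambda_1)$ to be a one-row partition, i.e. $h(\lambda(1)) \leq 1$; and whenever $h(\lambda(1)) \leq 1$ (and $h(\lambda(2)) \leq 3$, which is automatically imposed in \eqref{eqn:ultimococarattere}) the multiplicity is exactly $1$, in particular nonzero.

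Putting these together gives the equivalence: $m_{\langle \lambda \rangle} \neq 0 \iff a_{\lambda(1),\lambda(2)} \neq 0 \iff h(\lambda(1)) \leq 1$, which is precisely the claim. I expect the only genuinely delicate point to be verifying cleanly that the $\mathbb{H}_n$-action on the degree-zero part of $P_n^*$ restricts to the $\mathbb{Z}_2 \wr S_n$-action on the multilinear space for $(M_2(F), s)$ used by Drensky–Giambruno — that is, that the sign-twists built into the $\mathbb{H}_n$-action on $y_i^-$ (the $a_{\sigma(i)} \in \{*, *\zeta\}$ case) agree with the involutory sign action in the $\mathbb{Z}_2 \wr S_n$ setting, while the two extra coordinates $(\zeta, *\zeta$ data) act trivially on polynomials not involving degree-one variables. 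Once this compatibility is recorded, the rest is a direct transcription of Theorem \ref{th:s}, so the statement follows.
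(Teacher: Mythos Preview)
Your proposal is correct and follows exactly the paper's approach: the paper states the proposition as ``an immediate consequence of Theorem~\ref{th:s} and Remark~\ref{rmk:Id}'' with no further argument, and you have simply unpacked that one-line deduction by making the identification of the degree-zero part of the $\mathbb{H}_n$-cocharacter with the $\mathbb{Z}_2 \wr S_n$-cocharacter of $(M_2(F),s)$ explicit. The compatibility check you flag as ``delicate'' is indeed the only thing to verify, and the paper takes it for granted.
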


Now we consider only the multiplicities $m_{\langle \lambda \rangle}$ in (\ref{eqn:ultimococarattere}) with $\lambda(2) = \lambda(4) = \emptyset$.

\begin{proposition}\label{prop:13}
If $\langle \lambda \rangle = (\lambda(1), \emptyset, \lambda(3), \emptyset)$ in (\ref{eqn:ultimococarattere}), with $\lambda(3) = (w_1 + w_2, w_2) \neq \emptyset$, then $m_{\langle \lambda \rangle} \neq 0$ if and only if  $w_1 \leq 2$.
\end{proposition}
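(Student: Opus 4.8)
The plan is to use Theorems \ref{th:cocharacter} and \ref{th:hwv}: $m_{\langle\lambda\rangle}=\bar{\bar m}_{\langle\lambda\rangle}\neq 0$ exactly when some highest weight vector $f_{T_{\langle\lambda\rangle}}$, for $T_{\langle\lambda\rangle}$ a standard multitableau of shape $\langle\lambda\rangle=(\lambda(1),\emptyset,\lambda(3),\emptyset)$, fails to lie in $Id_2^*(M_{1,2}(F))$. Writing $\lambda(1)=(a_1+a_2,a_2)$ and $\lambda(3)=(w_1+w_2,w_2)$, by \eqref{eqn:highestweightvector} the initial vector is $f_{\bar T_{\langle\lambda\rangle}}=[y_1^+,y_2^+]^{a_2}(y_1^+)^{a_1}[z_1^+,z_2^+]^{w_2}(z_1^+)^{w_1}$, and every other $f_{T_{\langle\lambda\rangle}}$ arises from it by permuting the $n$ places. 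I will test on the bases $\{e_{11},\,e_{22}+e_{33}\}$ of $(M_{1,2}(F))_0^+$ and $\{P,Q\}$, with $P=e_{12}-e_{31}$, $Q=e_{13}+e_{21}$, of $(M_{1,2}(F))_1^+$, using $e_{11}P=e_{12}$, $(e_{22}+e_{33})P=-e_{31}$, $P^2=-e_{32}$, $Q^2=e_{23}$, $PQ=e_{11}-e_{33}$, $QP=e_{22}-e_{11}$, and the $*$-identities $[y_1^+,y_2^+]\equiv 0$, $(z^+)^3\equiv 0$, $St_3(z_1^+,z_2^+,z_3^+)\equiv 0$, $y^-z^+y^-\equiv 0$, $y^-z^-y^-\equiv 0$ recorded in Theorem \ref{th:decompositions} (equivalently in $\mathcal{I}$), together with the structural facts that for $m\in(M_{1,2}(F))_0^+$ and $u\in(M_{1,2}(F))_1^+$ one has $umu=\mu u^2$ and $u^2m=mu^2=\nu u^2$, where $\mu,\nu$ are the $e_{11}$- and $(e_{22}+e_{33})$-coordinates of $m$.

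For the ``if'' direction ($w_1\le 2\Rightarrow m_{\langle\lambda\rangle}\neq 0$) I will exhibit one non-identity highest weight vector. When $h(\lambda(1))\le 1$ the initial vector already works: substituting $y_1^+=e_{22}+e_{33}$, $z_1^+=P$, $z_2^+=Q$ turns it into $(-1)^{w_2}(e_{22}+e_{33})$, $(-1)^{w_2+1}e_{31}$ or $(-1)^{w_2+1}e_{32}$ according as $w_1=0,1,2$, hence into a nonzero matrix. When $h(\lambda(1))=2$ the initial vector contains the factor $[y_1^+,y_2^+]$ and vanishes, so I will instead choose the standard multitableau recording all of row $1$ of $\lambda(1)$, then one odd symmetric place, then all of row $2$, then the remaining $z$-structure; the single separating place then lies, in the monomial, strictly between the two entries of every height-two column of $\lambda(1)$ (this is where $\lambda(3)\neq\emptyset$ is used), and since $e_{11}(e_{22}+e_{33})=0$, evaluating $y_1^+=e_{11}$, $y_2^+=e_{22}+e_{33}$, $z_i^+\in\{P,Q\}$ leaves only a couple of ``pure'' terms whose sum is nonzero (for $\lambda(3)=(1)$, for instance, the vector $y_1^+y_1^+z_1^+y_2^+y_2^+-y_2^+y_1^+z_1^+y_1^+y_2^+-y_1^+y_2^+z_1^+y_2^+y_1^++y_2^+y_2^+z_1^+y_1^+y_1^+$ evaluates to $e_{12}-e_{31}\neq 0$). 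The same recipe covers $w_1\in\{0,1,2\}$ with minor adjustments to which $z$-slots carry the $[z_1^+,z_2^+]^{w_2}$- and $(z_1^+)^{w_1}$-factors.

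For the ``only if'' direction ($w_1\ge 3\Rightarrow m_{\langle\lambda\rangle}=0$) I must show every standard $f_{T_{\langle\lambda\rangle}}$ of this shape is a $*$-identity. By multi-homogeneity this reduces to killing every monomial obtained from the complete multilinearization under substitutions by the basis elements above — a word in $P,Q,e_{11},e_{22}+e_{33}$ in which the copies of $P,Q$ coming from $z_1^+$ occur at least $w_1+w_2\ge 3$ times. The mechanism: first absorb each $(M_{1,2}(F))_0^+$-factor lying between two consecutive $z_1^+$-copies (using $umu=\mu u^2$ and $u^2m=\nu u^2$), which collapses the word, up to a scalar, to one in $z_1^+$- and $z_2^+$-copies with at most isolated $(M_{1,2}(F))_0^+$-factors adjacent to $z_2^+$'s; then observe that among the $\ge 3$ surviving $z_1^+$-copies one always meets either three consecutive copies — annihilated by $(z^+)^3\equiv 0$, with $St_3(z_1^+,z_2^+,z_3^+)$ and $z_1^+z_2^+z_3^++z_2^+z_3^+z_1^++z_3^+z_1^+z_2^+$ accounting for the signs coming from the $\lambda(3)$-column alternations — or a block $(z_1^+)^2$, followed by a sub-word of odd total degree, followed by another $(z_1^+)^2$, annihilated by $y^-vy^-\equiv 0$ applied with $y^-=(z_1^+)^2$. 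The threshold $w_1=2$ is precisely where this forcing becomes unavoidable: with at most two copies of $z_1^+$ one can always flank a single odd $z_2^+$ by them and escape every trap — which is exactly what the ``if'' direction exploits — while a third copy makes one of the two situations occur.

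The step I expect to be the real obstacle is the ``only if'' direction: verifying that the above dichotomy truly exhausts every monomial of every standard multitableau, uniformly in $\lambda(1)$ and $\lambda(3)$, without an unwieldy case split. I would tame it by first using commutativity of $(M_{1,2}(F))_0^+$ to reduce the $y^+$-part of $T_{\langle\lambda\rangle}$ to at most one height-two column, and then classifying the $z$-monomials purely by the cyclic arrangement of the $z_1^+$- and $z_2^+$-places, matching each class to one of the identities in $\mathcal{I}$ listed above.
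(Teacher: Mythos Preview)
Your ``if'' direction is essentially the paper's argument: the same evaluation $(y_1^+,z_1^+,z_2^+)\mapsto(e_{22}+e_{33},\,e_{12}-e_{31},\,e_{13}+e_{21})$ on the initial highest weight vector handles $h(\lambda(1))\le 1$, and for $h(\lambda(1))=2$ the paper, like you, inserts part of the $z$-structure between the two rows of $T_{\lambda(1)}$. The paper is more explicit, giving three distinct tableaux for $w_1=0,1,2$; your single recipe (``row~1, one odd slot, row~2, remaining $z$-structure'') would still need to be checked separately in each case---for instance, when $w_1=0$ every column of $\lambda(3)$ has height~$2$, so the ``one odd slot'' must split a $[z_1^+,z_2^+]$ pair, which is precisely what the paper's $w_1=0$ tableau does.

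The ``only if'' direction is where your sketch leaves a genuine gap. Your plan---absorb the even-symmetric factors via $umu=\mu u^2$, $u^2m=\nu u^2$, and then invoke the dichotomy ``three consecutive $z_1^+$'s or $(z_1^+)^2\cdot(\text{odd})\cdot(z_1^+)^2$''---correctly identifies the ingredients, but the exhaustiveness of the dichotomy is exactly the hard part, and you have not established it. From $d(m)=w_1\ge 3$ and ``no three consecutive $z_1^+$'' one can deduce that at least two $(z_1^+)^2$-blocks occur, but showing that \emph{some} pair of them is separated by a sub-word of odd $\mathbb{Z}_2$-degree needs an argument, and your proposed fix (reduce $\lambda(1)$ to a single height-two column, then classify cyclic $z$-patterns) does not obviously supply one. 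The paper sidesteps this entirely with two cleaner moves. First, it observes that for $a=\beta e_{11}+\gamma(e_{22}+e_{33})$ and $a'=\gamma e_{11}+\beta(e_{22}+e_{33})$ one has $b_1ab_2=b_1b_2a'$ for all $b_1,b_2\in(M_{1,2}(F))_1^+$; since $a\mapsto a'$ is a bijection on $(M_{1,2}(F))_0^+$, this gives the equivalence $z_1^+y^+z_2^+\equiv 0\Leftrightarrow z_1^+z_2^+y^+\equiv 0$ at the level of identities (not merely evaluations), and iterating it pushes every $y^+$ to the ends of the monomial, reducing to Case $\lambda(1)=\emptyset$. Second, for monomials $m$ in $z_1^+,z_2^+$ alone with $d(m)\ge 3$, the paper runs an induction on $k(m)$, the number of $z_2^+$'s: the base cases $k(m)=0,1$ follow from $(z^+)^3\equiv 0$ and $y_1^-zy_2^-\equiv 0$ with $y_i^-=(z_1^+)^2$, and the inductive step peels a $z_2^+$ from an end or, when $m=(z_1^+)^2z_2^+m'''z_2^+(z_1^+)^2$, invokes the block identity directly. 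That induction is what your dichotomy is trying to compress; writing it out as the paper does is short and makes the ``forcing'' you describe rigorous.
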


\begin{proof}
In order to prove that $m_{\langle \lambda \rangle} \neq 0$, by Theorem \ref{th:cocharacter} and Theorem \ref{th:hwv}, we just need to show that there exists a Young multitableau $T_{(\lambda(1), \emptyset, \lambda(3), \emptyset)}$ such that the corresponding highest weight vector $f_{T_{(\lambda(1), \emptyset, \lambda(3), \emptyset)}}$ is not a $*$-identity for the algebra. Consider the elements $R_1 = e_{11}$, $R_2 = e_{22} + e_{33}$, $N_1 = e_{12} - e_{31}$, $N_2 = e_{13} + e_{21}$.

First, we suppose $h(\lambda(1)) \leq 1$ and we consider the initial standard multitableau $\bar{T}_{(\lambda(1), \emptyset, \lambda(3), \emptyset)}$ and the corresponding highest weight vector
    $$f_{\bar{T}_{(\lambda(1), \emptyset, \lambda(3), \emptyset)}}(y_1^+, z_1^+, z_2^+) = (y_1^+)^{\alpha_1}[z_1^+, z_2^+]^{w_2}(z_1^+)^{w_1}.$$
Then
    $$f_{\bar{T}_{(\lambda(1), \emptyset, \lambda(3), \emptyset)}}(R_2, N_1, N_2) = (e_{22} +e_{33})^{\alpha_1}(2^{w_2} e_{11} + (-1)^{w_2} e_{22} + (-1)^{w_2}e_{33})(e_{12} - e_{31})^{w_1} \neq 0.$$

Now assume that $h(\lambda(1)) = 2$ and distinguish three different cases.
 \begin{itemize}
 
    \item $w_1 = 0$. 
    \\We consider the following standard multitableaux 
     \begin{center}
        $T_{(\lambda(1), \emptyset, \lambda(3), \emptyset)} =$
     \end{center}
     \begin{center}
        $\left ( \ 
        \begin{array}{ccccc}
         \hline \multicolumn{1}{|c|}{1} & \multicolumn{1}{|c|}{\ \dots \ } & \multicolumn{1}{|c|}{\alpha_2} & \multicolumn{1}{|c|}{\ \dots \ } & \multicolumn{1}{|c|}{\alpha_2 + \alpha_1} \\
         \hline \multicolumn{1}{|c|}{\substack{\alpha_2 + \alpha_1 + 2}} & \multicolumn{1}{|c|}{\ \dots \ } & \multicolumn{1}{|c|}{\substack{2\alpha_2 + \alpha_1 + 1}} \\
         \cline{1-3} 
        \end{array}
        \ , \ \emptyset \ , \ 
        \begin{array}{|c|c|c|c|}
         \hline \substack{\alpha_2 + \alpha_1 + 1} & \substack{2\alpha_2 + \alpha_1 + 3} & \ \dots \ & n - 1 \\
         \hline \substack{2\alpha_2 + \alpha_1 + 2} & \substack{2\alpha_2 + \alpha_1 + 4} & \ \dots \ & n \\
         \hline
        \end{array}
        \ , \ \emptyset \right )$
     \end{center} 
     
    \   
    
    with
        $$f_{T_{(\lambda(1), \emptyset, \lambda(3), \emptyset)}}(y_1^+, y_2^+, z_1^+, z_2^+) = \underbrace{\tilde{y}_1^+ \dots \tilde{\tilde{y}}_1^+}_{\alpha_2}(y_1^+)^{\alpha_1}\tilde{z}_1^+\underbrace{\tilde{y}_2^+ \dots \tilde{\tilde{y}}_2^+}_{\alpha_2}\tilde{z}_2^+[z_1^+, z_2^+]^{w_2 - 1}.$$
    Then
        $$f_{T_{(\lambda(1), \emptyset, \lambda(3), \emptyset)}}(R_2, R_1, N_1, N_2) = \pm \beta e_{11} \pm e_{22} \pm e_{33} \neq 0,$$
    where $\beta \in \{0, 2^{w_2} \}$. 

   \item $w_1 = 1$.
   \\If $\lambda(3) \vdash r$, we denote by $\bar{T}_{\lambda(3)}$ the initial standard tableau on the integers $\alpha_2 + \alpha_1 + 1, \dots , \alpha_2 + \alpha_1 + r$. Consider the following multitableau
    \begin{center}
       $T_{(\lambda(1), \emptyset, \lambda(3), \emptyset)} = \left ( \ 
       \begin{array}{ccccc}
        \hline \multicolumn{1}{|c|}{1} & \multicolumn{1}{|c|}{\ \dots \ } & \multicolumn{1}{|c|}{\alpha_2} & \multicolumn{1}{|c|}{\ \dots \ } & \multicolumn{1}{|c|}{\alpha_2 + \alpha_1}\\
        \hline \multicolumn{1}{|c|}{\substack{\alpha_2 + \alpha_1 + r + 1}} & \multicolumn{1}{|c|}{\ \dots \ } & \multicolumn{1}{|c|}{n}\\
        \cline{1-3} 
       \end{array}
       \ , \ \emptyset \ , \ 
       \bar{T}_{\lambda(3)}
       \ , \ \emptyset \right )$
    \end{center} 

   \ 
          
   with corresponding highest weight vector 
       $$f_{T_{(\lambda(1), \emptyset, \lambda(3), \emptyset)}}(y_1^+, y_2^+, z_1^+, z_2^+) = \underbrace{\tilde{y}_1^+ \dots \tilde{\tilde{y}}_1^+}_{\alpha_2}(y_1^+)^{\alpha_1}f_{\bar{T}_{\lambda(3)}}(z_1^+, z_2^+)\underbrace{\tilde{y}_2^+ \dots \tilde{\tilde{y}}_2^+}_{\alpha_2} =$$
       $$\underbrace{\tilde{y}_1^+ \dots \tilde{\tilde{y}}_1^+}_{\alpha_2}(y_1^+)^{\alpha_1}[z_1^+, z_2^+]^{w_2}z_1^+\underbrace{\tilde{y}_2^+ \dots \tilde{\tilde{y}}_2^+}_{\alpha_2}.$$
   Then
       $$f_{T_{(\lambda(1), \emptyset, \lambda(3), \emptyset)}}(R_2, R_1, N_1, N_2) = \pm e_{31} \pm \beta e_{12} \neq 0,$$
   where $\beta \in \{ 0, 2^{w_2} \}$.

   \item $w_1 = 2$.
   \\If $\lambda(3) \vdash r$, we consider the following standard multitableaux 

   \
    
    \begin{center}
       $T_{(\lambda(1), \emptyset, \lambda(3), \emptyset)} =$
    \end{center}
    \begin{center}
       $\left ( \ 
        \begin{array}{ccccc}
         \hline \multicolumn{1}{|c|}{1} & \multicolumn{1}{|c|}{\ \dots \ } & \multicolumn{1}{|c|}{\alpha_2} & \multicolumn{1}{|c|}{\ \dots \ } & \multicolumn{1}{|c|}{\alpha_2 + \alpha_1}\\
         \hline \multicolumn{1}{|c|}{\substack{\alpha_2 + \alpha_1 + r}} & \multicolumn{1}{|c|}{\ \dots \ } & \multicolumn{1}{|c|}{n - 1}\\
         \cline{1-3} 
        \end{array}        
        \ , \ \emptyset \ , \ 
        \begin{array}{|ccccc|}
         \hline \multicolumn{1}{|c|}{\substack{\alpha_2 + \alpha_1 + 1}} & \multicolumn{1}{|c|}{\ \dots \ } & \multicolumn{1}{|c|}{\substack{\alpha_2 + \alpha_1 + r - 3}} & \multicolumn{1}{|c|}{\substack{\alpha_2 + \alpha_1 + r - 1}} & \multicolumn{1}{|c|}{n} \\
         \hline \multicolumn{1}{|c|}{\substack{\alpha_2 + \alpha_1 + 2}} & \multicolumn{1}{|c|}{\ \dots \ } & \multicolumn{1}{|c|}{\substack{\alpha_2 + \alpha_1 + r - 2}}\\
         \cline{1-3} 
        \end{array}
        \ , \ \emptyset \right )$
    \end{center} 

   \
          
   with 
       $$f_{T_{(\lambda(1), \emptyset, \lambda(3), \emptyset)}}(y_1^+, y_2^+, z_1^+, z_2^+) = 
       \underbrace{\tilde{y}_1^+ \dots \tilde{\tilde{y}}_1^+}_{\alpha_2}(y_1^+)^{\alpha_1}[z_1^+, z_2^+]^{w_2}z_1^+\underbrace{\tilde{y}_2^+ \dots \tilde{\tilde{y}}_2^+}_{\alpha_2} z_1^+.$$
   Hence
       $$f_{T_{(\lambda(1), \emptyset, \lambda(3), \emptyset)}}(R_2, R_1, N_1, N_2) = \pm e_{32} \neq 0.$$

 \end{itemize}

Conversely, we shall prove that if $w_1 \geq 3$ then $m_{\langle \lambda \rangle} = 0$. By Theorem \ref{th:cocharacter} and Theorem \ref{th:hwv}, we have to show that for every Young multitableau $T_{\langle \lambda \rangle}$, the corresponding highest weight vector $f_{T_{\langle \lambda \rangle}}$ is a $*$-identity for the algebra. We distinguish two cases.

\textbf{Case 1.} $\lambda(1) = \emptyset$.

By the hypothesis, any monomial of $f_{T_{(\emptyset, \emptyset, \lambda(3), \emptyset)}}$ has $w_2 + w_1$ variables $z_1^+$ and $w_2$ variables $z_2^+$, so $w_1$ is the difference between the number of variables $z_1^+$ and the number of variables $z_2^+$. For any monomial $m$ in $z_1^+$ and $z_2^+$, we define $l(m)$ as the number of $z_1^+$ in $m$, $k(m)$ as the number of variables $z_2^+$ in $m$ and $d(m) = l(m) - k(m)$. It is enough to prove that any monomial $m$ with $d(m) \geq 3$ is a $*$-identity for the algebra. We shall prove it by induction on $k(m)$. If $k(m) = 0$, then $m$ is of the type $(z_1^+)^{l(m)}$, $l(m) = d(m) \geq 3$, and it is a $*$-identity, because it is a consequence of the $*$-identity (b) of Theorem \ref{id}. If $k(m) = 1$, the possible monomials $m$ are of the type $(z_1^+)^i z_2^+ (z_1^+)^{l(m) - i}$, with $i = 0, \dots, l(m)$. We observe that if $i \geq 3$ or $l(m) - i \geq 3$, then $m$ is a $*$-identity, because it contains $(z_1^+)^3$ that is a $*$-identity as we have seen before. If $i \leq 2$ and $l(m) - i \leq 2$, since $l(m) = d(m) + 1 \geq 4$, we have to consider only the case $i = 2$ and $l(m) = 4$. Hence $m = (z_1^+)^2 z_2^+ (z_1^+)^2$ and it is a $*$-identity because it is a consequence of the $*$-identity (d) of Theorem \ref{id}.

If $k(m) > 1$, we assume that any monomial $p$ with $k(p) < k(m)$ and $d(p) \geq 3$ is a $*$-identity and we shall prove that also $m$ is a $*$- identity. Let $m = z_2^+ m'$ \ or \ $m = m' z_2^+$, where $m'$ is a monomial with $k(m') = k(m) - 1$, $d(m') = l(m) - k(m) + 1 = d(m) + 1 \geq 4$. Since $k(m') < k(m)$, by the induction hypothesis $m' \equiv 0$ and so $m \equiv 0$ is a $*$-identity. Suppose $m = z_1^+ m' z_1^+$. Then if $m = z_1^+ z_2^+ \underbrace{m'' z_1^+}_r$ \ or \ $m = \underbrace{z_1^+ m''}_r z_2^+ z_1^+$, since $r$ is a monomial with $k(r) = k(m) - 1$ and $d(r) = d(m) \geq 3$, by induction hypothesis, $r \equiv 0$ and so $m \equiv 0$ is a $*$-identity. Otherwise, if $m = z_1^+ z_1^+ m'' z_1^+ z_1^+$, with $m'' = z_1^+ m'''$ or $m'' = m''' z_1^+$, since $(z_1^+)^3 \equiv 0$, we get that $m \equiv 0$. 
\\So we are left to consider the case when $m = z_1^+ z_1^+ z_2^+ \underbrace{m''' z_2^+ z_1^+ z_1^+}_{r}$, where $r$ is a monomial with $k(r) = k(m) - 1$ and $d(r) = l(m) - 2 - k(m) + 1 = d(m) - 1 \geq 2$. If $d(r) \geq 3$, since $k(r) < k(m)$, by the induction hypothesis, $r \equiv 0$ and so $m \equiv 0$. If $d(r) = d(m) - 1 = 2$ then $d(m) = 3$ and the number of variables in $m''$ is $l(m'') + k(m'') = l(m) - 4 + k(m) = d(m) + k(m) - 4 + k(m) = 2k(m) - 1$.
Hence there is an odd number of variables $z_i^+$, with $i \in \{ 1, 2 \}$. So, since $z_1^+ z_1^+ z z_1^+ z_1^+ \equiv 0$, as a consequence of the $*$-identities (d) and (e) of Theorem \ref{id}, we get that $m \equiv 0$ is a $*$-identity.
     
\textbf{Case 2.} $\lambda(1) \neq \emptyset$.

More in general, we shall get that any monomial $m$ in even symmetric variables and in $z_1^+, z_2^+$ with $d(m) \geq 3$ is a $*$-identity.
Consider the monomial $m(y^+, z_1^+, z_2^+) = z_1^+ y^+ z_2^+$ and let $b_1, b_2 \in (M_{1, 2}(F))^+_1$ and $a_1 = \beta e_{11} + \gamma(e_{22} + e_{33}) \in (M_{1, 2}(F))^+_0$.
It is easily checked that $m(a_1, b_1, b_2) = m'(a_1', b_1, b_2)$, where $m'(y^+, z_1^+, z_2^+) = z_1^+ z_2^+ y^+$ and $a_1' = \gamma e_{11} + \beta (e_{22} + e_{33})$.
Hence $m \equiv 0$ if and only if $m' \equiv 0$. As a consequence, we also get that $z_1^+ y_1^+ \dots y_n^+ z_2^+ \equiv 0$ if and only if $z_1^+ z_2^+ y_1^+ \dots y_n^+ \equiv 0$.
By using this property, in order to establish whether a monomial in variables $y^+$ and $z^+$ is a $*$-identity, we may assume that it is of the type $y_{i_1}^+ \dots y_{i_{k_1}}^+ z_{j_1}^+ \dots z_{j_r}^+ y_{l_1}^+ \dots y_{l_{k_2}}^+$, with $k_1, k_2, r \geq 0$.
Now, let $m = m(y_1^+, y_2^+, z_1^+, z_2^+)$ with $d(m) \geq 3$, then $m$ is a $*$-identity by the previous case and the proof is completed.
\end{proof}

By Theorem \ref{th:id}, we get the following result.

\begin{proposition}\label{prop:14}
If $\langle \lambda \rangle = (\lambda(1), \emptyset, \emptyset, \lambda(4))$ in (\ref{eqn:ultimococarattere}), with $\lambda(4) = (\rho_1 + \rho_2, \rho_2) \neq \emptyset$, then $m_{\langle \lambda \rangle} \neq 0$ if and only if $\rho_1 \leq 2$.
\end{proposition}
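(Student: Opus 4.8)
The plan is to deduce the statement from Proposition \ref{prop:13} by transporting it through the algebra isomorphism $\tilde{\varphi}$ of the free $*$-superalgebra introduced above, which fixes the even variables $y^+$ and $y^-$ and interchanges $z^+$ with $z^-$. The idea is that $\tilde{\varphi}$ carries multipartitions of shape $(\lambda(1),\emptyset,\emptyset,\lambda(4))$ to multipartitions of shape $(\lambda(1),\emptyset,\lambda(4),\emptyset)$, a shape for which the answer is already known.

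First I would record that $\tilde{\varphi}$ restricts to an automorphism of every homogeneous component $F_m^n$, since it only renames $z_i^+$ as $z_i^-$ and conversely, preserving degree and the number of variables; moreover, by Theorem \ref{th:id}, $\tilde{\varphi}\bigl(F_m^n\cap Id_2^*(M_{1,2}(F))\bigr)=F_m^n\cap Id_2^*(M_{1,2}(F))$. Hence $\tilde{\varphi}$ induces a linear isomorphism of $F_m^n(M_{1,2}(F))$ which intertwines the $GL_m^4$-action with the action twisted by the automorphism of $GL_m^4$ that swaps its third and fourth factors (via the canonical basis identifications of $V_3$ and $V_4$), while fixing $V_1$ and $V_2$. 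Since this twist sends $\psi_{(\lambda(1),\lambda(2),\lambda(3),\lambda(4))}$ to $\psi_{(\lambda(1),\lambda(2),\lambda(4),\lambda(3))}$, comparing multiplicities in \eqref{eqn:cocarattere3} gives $\bar{\bar m}_{(\lambda(1),\lambda(2),\lambda(3),\lambda(4))}=\bar{\bar m}_{(\lambda(1),\lambda(2),\lambda(4),\lambda(3))}$ for every multipartition, and by Theorem \ref{th:cocharacter} the same holds for the $\mathbb{H}_n$-multiplicities $m_{\langle\lambda\rangle}$. Alternatively, one may argue at the level of highest weight vectors: by the explicit formula \eqref{eqn:highestweightvector}, $f_{\bar T_{(\lambda(1),\emptyset,\emptyset,\lambda(4))}}$ is a product of standard polynomials in the $y^+$'s and in the $z^-$'s, so $\tilde{\varphi}$ (being a $*$-algebra homomorphism that commutes with the right action of $S_n$ on places) sends each $f_{T_{(\lambda(1),\emptyset,\emptyset,\lambda(4))}}$ exactly to $f_{T'_{(\lambda(1),\emptyset,\lambda(4),\emptyset)}}$, where $T'$ is $T$ with its fourth component moved into the third slot; by Theorem \ref{th:id} one is a $*$-identity if and only if the other is, and Theorems \ref{th:cocharacter} and \ref{th:hwv} then yield the equality of multiplicities.

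Next I would specialise: taking $\lambda(2)=\emptyset$ gives $m_{(\lambda(1),\emptyset,\emptyset,\lambda(4))}=m_{(\lambda(1),\emptyset,\lambda(4),\emptyset)}$. Writing $\lambda(4)=(\rho_1+\rho_2,\rho_2)\neq\emptyset$, which has at most two rows, Proposition \ref{prop:13} applied with $\lambda(3):=\lambda(4)$, $w_1:=\rho_1$, $w_2:=\rho_2$ gives $m_{(\lambda(1),\emptyset,\lambda(4),\emptyset)}\neq 0$ if and only if $\rho_1\leq 2$. Combining the two equivalences yields the claim.

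There is no real obstacle here; the argument is a transport of structure, and Proposition \ref{prop:13} does all the genuine work. The only point requiring a little care is verifying the asserted matching of standard highest weight vectors under $\tilde{\varphi}$ — that applying $\tilde{\varphi}$ and re-sorting the variable blocks returns precisely the highest weight vector of the slot-swapped multitableau. This is immediate from \eqref{eqn:highestweightvector} together with the facts that $\tilde{\varphi}$ is a $*$-algebra isomorphism and that the operation $f\mapsto f\sigma^{-1}$ defining $f_{T_{\langle\lambda\rangle}}$ acts on places and hence commutes with $\tilde{\varphi}$; one should also note that the height constraints $h(\lambda(3))\leq 2$ and $h(\lambda(4))\leq 2$ appearing in \eqref{eqn:ultimococarattere} are symmetric in the third and fourth slots, so the passage between the two shapes stays within the range where Theorem \ref{th:cocharacter} and Proposition \ref{prop:13} apply.
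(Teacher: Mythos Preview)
Your proposal is correct and follows exactly the paper's own approach: the paper proves Proposition~\ref{prop:14} in one line by invoking Theorem~\ref{th:id} (the $\tilde{\varphi}$-symmetry swapping $z^+$ and $z^-$) to reduce to Proposition~\ref{prop:13}. Your argument merely spells out in detail what that one-line reduction entails, including the identification of multiplicities $m_{(\lambda(1),\lambda(2),\lambda(3),\lambda(4))}=m_{(\lambda(1),\lambda(2),\lambda(4),\lambda(3))}$, which is precisely the content the paper leaves implicit.
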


Next we consider the case $\lambda(3) \neq \emptyset$ and $\lambda(4) \neq \emptyset$.

\begin{proposition}\label{prop:134}
If $\langle \lambda \rangle = (\lambda(1), \emptyset, \lambda(3), \lambda(4))$ in (\ref{eqn:ultimococarattere}), with $\lambda(3) = (w_1 + w_2, w_2) \neq \emptyset$, $\lambda(4) = (\rho_1 + \rho_2, \rho_2) \neq \emptyset$ and $|w_1 - \rho_1| \leq 2$, then $m_{\langle \lambda \rangle} \neq 0$ .
\end{proposition}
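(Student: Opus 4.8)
The plan is to exhibit, for every multipartition allowed by \eqref{eqn:ultimococarattere} (so in particular with $h(\lambda(1)) \le 2$), a highest weight vector that is not a $*$-identity of $M_{1,2}(F)$, and then to conclude with Theorem~\ref{th:cocharacter} and Theorem~\ref{th:hwv}. First I would normalise the hypothesis: the automorphism $\tilde{\varphi}$ of the free $*$-superalgebra from Theorem~\ref{th:id} interchanges the classes $Z^+$ and $Z^-$ and fixes the even variables, hence it interchanges the third and fourth components of a multipartition; since $\tilde{\varphi}(Id_2^*(M_{1,2}(F)))=Id_2^*(M_{1,2}(F))$ by Theorem~\ref{th:id}, it follows that $m_{(\lambda(1),\emptyset,\lambda(3),\lambda(4))}=m_{(\lambda(1),\emptyset,\lambda(4),\lambda(3))}$, so we may assume $w_1\ge\rho_1$ and put $\delta:=w_1-\rho_1\in\{0,1,2\}$. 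All evaluations will use $R_1=e_{11}$, $R_2=e_{22}+e_{33}$ in $(M_{1,2}(F))_0^+$, $N_1=e_{12}-e_{31}$, $N_2=e_{13}+e_{21}$ in $(M_{1,2}(F))_1^+$ and $P_1=e_{12}+e_{31}$, $P_2=e_{13}-e_{21}$ in $(M_{1,2}(F))_1^-$, for which one computes the diagonal matrices $[N_1,N_2]=[P_2,P_1]=2e_{11}-e_{22}-e_{33}$, the relation $N_1P_2=-(e_{11}+e_{33})$ with $e_{11}+e_{33}$ idempotent, and $N_1^2=-e_{32}$, $e_{33}N_1=-e_{31}$, $e_{33}N_1^2=-e_{32}$.

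Assume first $h(\lambda(1))\le 1$, so $\lambda(1)=(\alpha_1)$. Starting from the initial highest weight vector $(y_1^+)^{\alpha_1}[z_1^+,z_2^+]^{w_2}(z_1^+)^{w_1}[z_1^-,z_2^-]^{\rho_2}(z_1^-)^{\rho_1}$ given by \eqref{eqn:highestweightvector}, I would apply the place permutation that interleaves the $\rho_1$ singleton letters $z_1^+$ with the $\rho_1$ singleton letters $z_1^-$ and moves the factor $[z_1^-,z_2^-]^{\rho_2}$ to the end, producing a highest weight vector for a suitable multitableau $T_{\langle \lambda \rangle}$,
\[
 f_{T_{\langle \lambda \rangle}}=(y_1^+)^{\alpha_1}\,[z_1^+,z_2^+]^{w_2}\,(z_1^+z_1^-)^{\rho_1}\,(z_1^+)^{\delta}\,[z_1^-,z_2^-]^{\rho_2}.
\]
Substituting $y_1^+=R_2$, $z_1^+=N_1$, $z_2^+=N_2$, $z_1^-=P_2$, $z_2^-=P_1$ and using the identities above, the product telescopes: $(e_{22}+e_{33})[N_1,N_2]^{w_2}=\pm(e_{22}+e_{33})$, then multiplication by $(N_1P_2)^{\rho_1}=\pm(e_{11}+e_{33})$ gives $\pm e_{33}$, then multiplication by $N_1^{\delta}$ gives $\pm e_{33}$, $\pm e_{31}$ or $\pm e_{32}$ according as $\delta=0,1,2$, and finally multiplication by the diagonal $[P_2,P_1]^{\rho_2}$ only rescales this matrix unit by a nonzero scalar. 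Hence $f_{T_{\langle \lambda \rangle}}\notin Id_2^*(M_{1,2}(F))$, so $m_{\langle \lambda \rangle}\neq 0$. When some of $\alpha_1,w_2,\rho_1,\rho_2$ vanishes the corresponding factor is simply omitted and the same computation applies; in particular when $\rho_1=0$ one has $\delta=w_1\le 2$ and this is exactly the situation of Proposition~\ref{prop:13}.

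For $h(\lambda(1))=2$, write $\lambda(1)=(\alpha_2+\alpha_1,\alpha_2)$ with $\alpha_2\ge 1$. Now the naive block $[y_1^+,y_2^+]^{\alpha_2}$ evaluates to $0$ since $[R_2,R_1]=0$, so I would instead mimic the construction in the proof of Proposition~\ref{prop:13}: take a multitableau whose highest weight vector distributes the two alternation runs of $y_1^+$'s and $y_2^+$'s around the odd word $[z_1^+,z_2^+]^{w_2}(z_1^+z_1^-)^{\rho_1}(z_1^+)^{\delta}[z_1^-,z_2^-]^{\rho_2}$ — placing one letter of it between the two runs and the rest after the second one, organised along the three sub-cases $\delta=0,1,2$ exactly as there. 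Evaluating at $R_1,R_2,N_1,N_2,P_1,P_2$, the block $(z_1^+z_1^-)^{\rho_1}$ again collapses to $\pm(e_{11}+e_{33})$ and the commutator blocks to diagonal matrices, so the computation reduces to the one already carried out for the $(\lambda(1),\lambda(3))$ part in Proposition~\ref{prop:13} and produces a nonzero matrix whose relevant entry has coefficient $\pm 1$; hence once more $f_{T_{\langle \lambda \rangle}}\notin Id_2^*(M_{1,2}(F))$ and $m_{\langle \lambda \rangle}\neq 0$.

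The hard part will be this last case, and specifically the sub-case $\alpha_2\ge 2$: one must choose the interleaving of the two $y$-alternation runs with the odd word so that the antisymmetrisations on the even symmetric variables — which collapse to $0$ when performed ``locally'' because $R_1$ and $R_2$ commute — nevertheless survive globally and yield a matrix entry whose coefficient is forced to be $\pm 1$ rather than one that happens to cancel; this is the same delicate bookkeeping as in Proposition~\ref{prop:13}, now carried out with the extra variables $z_1^-,z_2^-$ present. Note that the hypothesis $|w_1-\rho_1|\le 2$ enters only through the bound $\delta\le 2$, which is what keeps $N_1^{\delta}$ from vanishing and lets it serve as the tail of the word; the converse implication, that $m_{\langle \lambda \rangle}=0$ when $|w_1-\rho_1|\ge 3$, is not part of the statement and would require a separate argument of the kind used in the second half of Proposition~\ref{prop:13}.
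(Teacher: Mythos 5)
Your overall strategy coincides with the paper's: reduce to $w_1 \ge \rho_1$ via the automorphism $\tilde{\varphi}$ of Theorem \ref{th:id}, then produce for each multipartition a highest weight vector surviving evaluation at $R_1, R_2, N_1, N_2$ and the two odd skew matrices, interleaving the singleton letters $z_1^+$ and $z_1^-$ so as to dodge the degree-three identities on odd variables. For $h(\lambda(1)) \le 1$ your single word $(y_1^+)^{\alpha_1}[z_1^+,z_2^+]^{w_2}(z_1^+ z_1^-)^{\rho_1}(z_1^+)^{\delta}[z_1^-,z_2^-]^{\rho_2}$ is a legitimate highest weight vector (it is a place permutation of the initial one), the auxiliary computations $[N_1,N_2]=2e_{11}-e_{22}-e_{33}$, $N_1(e_{13}-e_{21})=-(e_{11}+e_{33})$, $N_1^2=-e_{32}$, $N_1^3=0$ are all correct, and the telescoping evaluation goes through including the degenerate subcases. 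This is actually tidier than the paper's treatment, which splits into $w_1\le 2$ (where the $\lambda(4)$-block is simply appended) and $w_1\ge 3$ (with tails $(z_1^-z_1^+)^{\rho_1-2}(z_1^-)^2$, etc., depending on $\delta$).

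The gap is the case $h(\lambda(1))=2$, which you explicitly defer. You correctly identify both the obstruction (since $R_1R_2=R_2R_1=0$, an alternation of $y_1^+,y_2^+$ performed in adjacent positions evaluates to zero, so the two runs must be separated by odd letters) and the remedy, but you exhibit no word and verify no evaluation. This is not a formal corollary of Proposition \ref{prop:13}: there the separating letters were always $z_1^+$'s sitting at the end of the word, whereas here the available separators depend on $\delta$ — when $\delta=0$ the second run must be sandwiched between two $z_1^-$'s, when $\delta=1,2$ between a $z_1^-$ and a $z_1^+$ — and your uniform word ends with the block $[z_1^-,z_2^-]^{\rho_2}$, which would have to be relocated before one can wedge the second run between two singleton letters (the paper's words for $\lambda(1)\neq\emptyset$, $w_1\ge 3$ indeed place $[z_1^-,z_2^-]^{\rho_2}$ in the middle and end with singletons). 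One must then check, for each placement, that after the orthogonal idempotents kill all mixed terms the surviving two-term alternating sum $R_2^{\alpha_2}uR_1^{\alpha_2}v \pm R_1^{\alpha_2}uR_2^{\alpha_2}v$ is nonzero. Since these explicit words are reused later in Theorem \ref{th:final}, this case needs to be written out rather than asserted.
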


\begin{proof}
Consider the elements $R_1 = e_{11}$, $R_2 = e_{22} + e_{33}$, $N_1 = e_{12} - e_{31}$, $N_2 = e_{13} + e_{21}$, $P_1 = e_{13} - e_{21}$, $P_2 = e_{12} + e_{31}$.
First suppose that $w_1 \geq \rho_1$ and distinguish some cases.
 \begin{itemize}
    \item $w_1 \leq 2$.
    \\We consider the multitableau
          $$T_{(\lambda(1), \emptyset, \lambda(3), \lambda(4))} = (T_{\lambda(1)}, \emptyset, T_{\lambda(3)}, \bar{T}_{\lambda(4)}),$$
     where $\bar{T}_{\lambda(4)}$ is the initial standard tableau on the integers $2 \alpha_2 + \alpha_2 + 2 w_2 + w_1 + 1$, $\dots$, $n$ and $T_{\lambda(1)}$ and $T_{\lambda(3)}$ are the tableaux we constructed in the proof of Proposition \ref{prop:13} on the integers $1$, $\dots$, $2 \alpha_2 + \alpha_2 + 2 w_2 + w_1$.
     Hence  
         $$f_{T_{(\lambda(1), \emptyset, \lambda(3), \lambda(4))}}(y_1^+, y_2^+, z_1^+, z_2^+, z_1^-, z_2^-) =  f_{T_{(\lambda(1), \emptyset, \lambda(3), \emptyset)}}(y_1^+, y_2^+, z_1^+, z_2^+) f_{\bar{T}_{\lambda(4)}}(z_1^-, z_2^-),$$
    where $f_{T_{(\lambda(1), \emptyset, \lambda(3), \emptyset)}}(y_1^+, y_2^+, z_1^+, z_2^+)$ are the highest weight vectors obtained in Proposition \ref{prop:13}.
    Then, if $w_1 = 0$,
        $$f_{T_{(\lambda(1), \emptyset, \lambda(3), \lambda(4))}}(R_2, R_1, N_1, N_2, P_1, P_2) = (\pm \beta e_{11} \pm e_{22} \pm e_{33}) (2^{\rho_2} e_{11} \pm e_{22} \pm e_{33}) \neq 0;$$
    if $w_1 = 1$,
        $$f_{T_{(\lambda(1), \emptyset, \lambda(3), \lambda(4))}}(R_2, R_1, N_1, N_2, P_1, P_2) = (\pm e_{31} \pm \beta e_{12}) (2^{\rho_2} e_{11} \pm e_{22} \pm e_{33}) (e_{13} - e_{21})^{\rho_1} \neq 0,$$
    where $\beta \in \{ 0, 2^{w_2} \}$;
    \\if $w_1 = 2$,
        $$f_{T_{(\lambda(1), \emptyset, \lambda(3), \lambda(4))}}(R_2, R_1, N_1, N_2, P_1, P_2) = \pm e_{32} (2^{\rho_2} e_{11} \pm e_{22} \pm e_{33}) (e_{13} - e_{21})^{\rho_1} \neq 0.$$
    
    \item $w_1 \geq 3$.
     \\Let $\lambda(1) = \emptyset$. First, let $w_1 - \rho_1 = 0$ and consider the following multitableau 
            $$T_{(\emptyset, \emptyset, \lambda(3), \lambda(4))} = (\emptyset, \emptyset, T_{\lambda(3)}, T_{\lambda(4)}),$$
        where
         \begin{center}
            $T_{\lambda(3)} =
            \begin{array}{ccccccccc}
             \hline \multicolumn{1}{|c|}{1} & \multicolumn{1}{|c|}{\ \dots \ } & \multicolumn{1}{|c|}{\substack{2 w_2 - 1}} & \multicolumn{1}{|c|}{\substack{2 w_2 + 1}} & \multicolumn{1}{|c|}{\substack{2 w_2 + 2}} & \multicolumn{1}{|c|}{\substack{2 w_2 + 2 \rho_2 + 4}} & \multicolumn{1}{|c|}{\substack{2 w_2 + 2 \rho_2 + 6}} & \multicolumn{1}{|c|}{\dots} & \multicolumn{1}{|c|}{\substack{n - 2}}\\
             \hline \multicolumn{1}{|c|}{2} & \multicolumn{1}{|c|}{\ \dots \ } & \multicolumn{1}{|c|}{\substack{2 w_2}}\\
             \cline{1-3}
            \end{array}$
         \end{center}

        and
                
         \begin{center}
            $T_{\lambda(4)} = 
            \begin{array}{ccccccccc}
             \hline \multicolumn{1}{|c|}{\substack{2 w_2 + 3}} & \multicolumn{1}{|c|}{\ \dots \ } & \multicolumn{1}{|c|}{\substack{2 w_2 + 2 \rho_2 + 1}} & \multicolumn{1}{|c|}{\substack{2 w_2 + 2 \rho_2 + 3}} & \multicolumn{1}{|c|}{\substack{2 w_2 + 2 \rho_2 + 5}} & \multicolumn{1}{|c|}{\ \dots \ } & \multicolumn{1}{|c|}{\substack{n - 3}} & \multicolumn{1}{|c|}{\substack{n - 1}} & \multicolumn{1}{|c|}{\substack{n}}\\
             \hline \multicolumn{1}{|c|}{\substack{2 w_2 + 4}} & \multicolumn{1}{|c|}{ \ \dots \ } & \multicolumn{1}{|c|}{\substack{2 w_2 + 2 \rho_2 + 2}} \\
             \cline{1-3}
            \end{array} \ $.
         \end{center}
Then
         $$f_{T_{(\emptyset, \emptyset, \lambda(3), \lambda(4))}}(z_1^+, z_2^+, z_1^-, z_2^-) = [z_1^+, z_2^+]^{w_2}(z_1^+)^2 [z_1^-, z_2^-]^{\rho_2}(z_1^- z_1^+)^{\rho_1 - 2} (z_1^-)^2$$
and
         $$f_{T_{(\emptyset, \emptyset, \lambda(3), \lambda(4))}}(N_1, N_2, P_1, P_2) = \pm e_{33} \neq 0.$$

    If $w_1 - \rho_1 = 1$, then we consider the following multitableau 
            $$T_{(\emptyset, \emptyset, \lambda(3), \lambda(4))} = (\emptyset, \emptyset, T_{\lambda(3)}, T_{\lambda(4)}),$$
        where
         \begin{center}
            $T_{\lambda(3)} =
            \begin{array}{cccccccccc}
             \hline \multicolumn{1}{|c|}{1} & \multicolumn{1}{|c|}{\ \dots \ } & \multicolumn{1}{|c|}{\substack{2 w_2 - 1}} & \multicolumn{1}{|c|}{\substack{2 w_2 + 1}} & \multicolumn{1}{|c|}{\substack{2 w_2 + 2}} & \multicolumn{1}{|c|}{\substack{2 w_2 + 2 \rho_2 + 4}} & \multicolumn{1}{|c|}{\substack{2 w_2 + 2 \rho_2 + 6}} & \multicolumn{1}{|c|}{\dots} & \multicolumn{1}{|c|}{\substack{n - 3}} & \multicolumn{1}{|c|}{\substack{n}}\\
             \hline \multicolumn{1}{|c|}{2} & \multicolumn{1}{|c|}{\ \dots \ } & \multicolumn{1}{|c|}{\substack{2 w_2}}\\
             \cline{1-3}
            \end{array}$
         \end{center}

and
     
         \begin{center}
            $T_{\lambda(4)} = 
            \begin{array}{ccccccccc}
             \hline \multicolumn{1}{|c|}{\substack{2 w_2 + 3}} & \multicolumn{1}{|c|}{\ \dots \ } & \multicolumn{1}{|c|}{\substack{2 w_2 + 2 \rho_2 + 1}} & \multicolumn{1}{|c|}{\substack{2 w_2 + 2 \rho_2 + 3}} & \multicolumn{1}{|c|}{\substack{2 w_2 + 2 \rho_2 + 5}} & \multicolumn{1}{|c|}{\ \dots \ } & \multicolumn{1}{|c|}{\substack{n - 4}} & \multicolumn{1}{|c|}{\substack{n - 2}} & \multicolumn{1}{|c|}{\substack{n - 1}}\\
             \hline \multicolumn{1}{|c|}{\substack{2 w_2 + 4}} & \multicolumn{1}{|c|}{ \ \dots \ } & \multicolumn{1}{|c|}{\substack{2 w_2 + 2 \rho_2 + 2}} \\
             \cline{1-3}
            \end{array} \ $.
         \end{center}
Then
            $$f_{T_{(\emptyset, \emptyset, \lambda(3), \lambda(4))}}(z_1^+, z_2^+, z_1^-, z_2^-) = [z_1^+, z_2^+]^{w_2}(z_1^+)^2 [z_1^-, z_2^-]^{\rho_2}(z_1^- z_1^+)^{\rho_1 - 2} (z_1^-)^2 z_1^+$$
and
            $$f_{T_{(\emptyset, \emptyset, \lambda(3), \lambda(4))}}(N_1, N_2, P_1, P_2) = \pm e_{31} \neq 0.$$

        Finally, if $w_1 - \rho_1 = 2$, then we consider the following multitableau 
            $$T_{(\emptyset, \emptyset, \lambda(3), \lambda(4))} = (\emptyset, \emptyset, T_{\lambda(3)}, T_{\lambda(4)}),$$
        where
         \begin{center}
            $T_{\lambda(3)} =
            \begin{array}{cccccccccc}
             \hline \multicolumn{1}{|c|}{1} & \multicolumn{1}{|c|}{\ \dots \ } & \multicolumn{1}{|c|}{\substack{2 w_2 - 1}} & \multicolumn{1}{|c|}{\substack{2 w_2 + 1}} & \multicolumn{1}{|c|}{\substack{2 w_2 + 2}} & \multicolumn{1}{|c|}{\substack{2 w_2 + 2 \rho_2 + 4}} & \multicolumn{1}{|c|}{\substack{2 w_2 + 2 \rho_2 + 6}} & \multicolumn{1}{|c|}{\dots} & \multicolumn{1}{|c|}{\substack{n - 2}} & \multicolumn{1}{|c|}{\substack{n}}\\
             \hline \multicolumn{1}{|c|}{2} & \multicolumn{1}{|c|}{\ \dots \ } & \multicolumn{1}{|c|}{\substack{2 w_2}}\\
             \cline{1-3}
            \end{array}$
         \end{center}

and
                
         \begin{center}
            $T_{\lambda(4)} = 
            \begin{array}{cccccccc}
             \hline \multicolumn{1}{|c|}{\substack{2 w_2 + 3}} & \multicolumn{1}{|c|}{\ \dots \ } & \multicolumn{1}{|c|}{\substack{2 w_2 + 2 \rho_2 + 1}} & \multicolumn{1}{|c|}{\substack{2 w_2 + 2 \rho_2 + 3}} & \multicolumn{1}{|c|}{\substack{2 w_2 + 2 \rho_2 + 5}} & \multicolumn{1}{|c|}{\ \dots \ } & \multicolumn{1}{|c|}{\substack{n - 3}} & \multicolumn{1}{|c|}{\substack{n - 1}}\\
             \hline \multicolumn{1}{|c|}{\substack{2 w_2 + 4}} & \multicolumn{1}{|c|}{ \ \dots \ } & \multicolumn{1}{|c|}{\substack{2 w_2 + 2 \rho_2 + 2}} \\
             \cline{1-3}
            \end{array} \ $.
         \end{center}
Then
         $$f_{T_{(\emptyset, \emptyset, \lambda(3), \lambda(4))}}(z_1^+, z_2^+, z_1^-, z_2^-) = [z_1^+, z_2^+]^{w_2}(z_1^+)^2 [z_1^-, z_2^-]^{\rho_2}(z_1^- z_1^+)^{\rho_1}$$
and
         $$f_{T_{(\emptyset, \emptyset, \lambda(3), \lambda(4))}}(N_1, N_2, P_1, P_2) = \pm e_{32} \neq 0.$$

\indent Now, let $\lambda(1) \neq \emptyset$. First suppose $w_1 - \rho_1 = 0$ and consider the multitableau $T_{(\lambda(1), \emptyset, \lambda(3), \lambda(4))}$ such that 
            $$f_{T_{(\lambda(1), \emptyset, \lambda(3), \lambda(4))}}(y_1^+, y_2^+, z_1^+, z_2^+, z_1^-, z_2^-) =$$ $$\underbrace{\tilde{y}_1^+ \dots \tilde{\tilde{y}}_1^+}_{\alpha_2} (y_1^+)^{\alpha_1} [z_1^+, z_2^+]^{w_2} (z_1^+)^2 [z_1^-, z_2^-]^{\rho_2}(z_1^- z_1^+)^{\rho_1 - 2} z_1^- \underbrace{\tilde{y}_2^+ \dots \tilde{\tilde{y}}_2^+}_{\alpha_2} z_1^-.$$
        Then
            $$f_{T_{(\lambda(1), \emptyset, \lambda(3), \lambda(4))}}(R_2, R_1, N_1, N_2, P_1, P_2) = \pm e_{33} \neq 0.$$
            
        If $w_1 - \rho_1 = 1$, then we consider the following multitableau 
            $$T_{(\lambda(1), \emptyset, \lambda(3), \lambda(4))} = (T_{\lambda(1)}, \emptyset, T_{\lambda(3)}, T_{\lambda(4)}),$$
        where
            $$T_{\lambda(1)} =
            \begin{array}{ccccc}
             \hline \multicolumn{1}{|c|}{\substack{1}} & \multicolumn{1}{|c|}{\ \dots \ } & \multicolumn{1}{|c|}{\substack{\alpha_2}} & \multicolumn{1}{|c|}{\ \dots \ } & \multicolumn{1}{|c|}{\substack{\alpha_1 + \alpha_2}}\\
             \hline \multicolumn{1}{|c|}{\substack{n - \alpha_2 + 1}} & \multicolumn{1}{|c|}{ \ \dots \ } & \multicolumn{1}{|c|}{\substack{n}} \\
             \cline{1-3}
            \end{array}$$
 
        and $T_{\lambda(3)}$ and $T_{\lambda(4)}$ are the tableaux we considered in the previous case on the integers $\alpha_1 + \alpha_2 + 1$, $\dots$, $n - \alpha_2$. Hence
            $$f_{T_{(\lambda(1), \emptyset, \lambda(3), \lambda(4))}}(y_1^+, y_2^+, z_1^+, z_2^+, z_1^-, z_2^-) = \underbrace{\tilde{y}_1^+ \dots \tilde{\tilde{y}}_1^+}_{\alpha_2} (y_1^+)^{\alpha_1} f_{T_{(\emptyset, \emptyset, \lambda(3), \lambda(4))}}(z_1^+, z_2^+, z_1^-, z_2^-) \underbrace{\tilde{y}_2^+ \dots \tilde{\tilde{y}}_2^+}_{\alpha_2}$$
        where $f_{T_{(\emptyset, \emptyset, \lambda(3), \lambda(4))}}(z_1^+, z_2^+, z_1^-, z_2^-)$ is the highest weight vector obtained in the previous case.
        Then
            $$f_{T_{(\lambda(1), \emptyset, \lambda(3), \lambda(4))}}(R_2, R_1, N_1, N_2, P_1, P_2) = \pm e_{31} \neq 0.$$
     
        Finally, if $w_1 - \rho_1 = 2$, then we consider the multitableau $T_{\langle \lambda \rangle}$ such that
            $$f_{T_{(\lambda(1), \emptyset, \lambda(3), \lambda(4))}}(y_1^+, y_2^+, z_1^+, z_2^+, z_1^-, z_2^-) =$$ $$\underbrace{\tilde{y}_1^+ \dots \tilde{\tilde{y}}_1^+}_{\alpha_2} (y_1^+)^{\alpha_1} [z_1^+, z_2^+]^{w_2} (z_1^+)^2 [z_1^-, z_2^-]^{\rho_2}(z_1^- z_1^+)^{\rho_1 - 1} z_1^- \underbrace{\tilde{y}_2^+ \dots \tilde{\tilde{y}}_2^+}_{\alpha_2} z_1^+.$$
        Then
            $$f_{T_{(\lambda(1), \emptyset, \lambda(3), \lambda(4))}}(R_2, R_1, N_1, N_2, P_1, P_2) = \pm e_{32} \neq 0.$$
 \end{itemize}

By Theorem \ref{th:id}, the same approach applies in case $\rho_1 \geq w_1$. Now the proof is completed.
\end{proof}

Now we can ask: is it true that if the multiplicity is different than zero then $|w_1 - \rho_1| \leq 2$? We consider an easy case in which $|w_1 - \rho_1| \geq 3$.

\begin{example}
    Let $\langle \lambda \rangle = (\emptyset, \emptyset, (1, 1), (3))$ in (\ref{eqn:ultimococarattere}). Notice that $|w_1 - \rho_1| = 4$. It is easy to see that for any $T_{(\emptyset, \emptyset, (1, 1), (3))}$ the corresponding highest weight vector $f_{T_{(\emptyset, \emptyset, (1, 1), (3))}}$ is a $*$-identity being a consequence of the set of $*$-identities
     $$\big\{ z_1^+ z_2^+ z_3^+ + z_2^+ z_3^+ z_1^+ + z_3^+ z_1^+ z_2^+, \ \ \ \tilde{z}_1^+ y^- \tilde{z}_2^+, \ \ \ \tilde{z}_1^+ z^- \tilde{z}_2^+ z^- z^-, \ \ \ z^- \tilde{z}_1^+ z^- \tilde{z}_2^+ z^-, \ \ \ z^- z^- \tilde{z}_1^+ z^- \tilde{z}_2^+ \big\}.$$
    Then $m_{\langle \lambda \rangle} = 0$ and the same result holds if $\langle \lambda \rangle = (\emptyset, \emptyset, (3), (1, 1))$, by Theorem \ref{th:id}.
\end{example}

Motivated by this example we are led to think that the following conjecture holds.

\begin{conjecture}\label{conj:1}
    Let $(\lambda(1), \emptyset, \lambda(3), \lambda(4))$ in (\ref{eqn:ultimococarattere}), with $\lambda(3) = (w_1 + w_2, w_2) \neq \emptyset$, $\lambda(4) = (\rho_1 + \rho_2, \rho_2) \neq \emptyset$. Then $|w_1 - \rho_1| \leq 2$ if and only if $m_{\langle \lambda \rangle} \neq 0$.
\end{conjecture}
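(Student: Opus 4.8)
\medskip
\noindent
\emph{A plan for a proof.} The implication $|w_1-\rho_1|\le 2\Rightarrow m_{\langle\lambda\rangle}\ne 0$ is Proposition \ref{prop:134}, so the task is to prove that $|w_1-\rho_1|\ge 3\Rightarrow m_{\langle\lambda\rangle}=0$. By Theorem \ref{th:id} one may assume $w_1-\rho_1\ge 3$, and by Theorems \ref{th:cocharacter} and \ref{th:hwv} it suffices to check that every highest weight vector $f_{T_{\langle\lambda\rangle}}$, with $\langle\lambda\rangle=(\lambda(1),\emptyset,\lambda(3),\lambda(4))$, lies in $Id_2^*(M_{1,2}(F))$. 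Since $(M_{1,2}(F))_0^+=\mathrm{span}_F\{e_{11},e_{22}+e_{33}\}$ consists of block scalars, every $y^+$-variable contributes only a scalar factor in any evaluation; arguing as in Case $2$ of the proof of Proposition \ref{prop:13} (the place-permutation-and-swap device $z_1^+y^+z_2^+\rightsquigarrow z_1^+z_2^+y^+$, applied now also across the $z^-$-variables), one reduces to the case $\lambda(1)=\emptyset$.

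For that case the key is an explicit model of the odd part. Writing $M_{1,2}(F)=\mathrm{End}(A)\oplus\mathrm{End}(B)$ with $\dim A=1$, $\dim B=2$, one has $(M_{1,2}(F))_1\cong\mathrm{Hom}(B,A)\oplus\mathrm{Hom}(A,B)\cong B^*\oplus B$, and the product of two odd elements $u_i=\xi_i\oplus v_i$ is $u_1u_2=\xi_1(v_2)\,1_A\ \oplus\ (v_1\otimes\xi_2)$; iterating, a product $u_1\cdots u_r$ of odd elements decomposes according to the two blocks into a product of the pairings $\xi_{u_1}(v_{u_2})\xi_{u_3}(v_{u_4})\cdots$ times a tail, plus a product of the pairings $\xi_{u_2}(v_{u_3})\xi_{u_4}(v_{u_5})\cdots$ times a tail. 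Two facts follow. First, $(M_{1,2}(F))_1^+$ and $(M_{1,2}(F))_1^-$ are complementary maximal totally isotropic subspaces for the quadratic form $q(\xi\oplus v)=\xi(v)$; hence $\xi_u(v_u)=0$ for every homogeneous symmetric or skew odd $u$, and in particular $u^3=0$. Second, for symmetric (resp. skew) odd $u_1,u_2$ one computes $u_1u_2-u_2u_1=\xi_1(v_2)\,(2e_{11}-e_{22}-e_{33})$, so $[z_1^+,z_2^+]$ and $[z_1^-,z_2^-]$ evaluate to a scalar times the fixed element $D=2e_{11}-e_{22}-e_{33}\in(M_{1,2}(F))_0^+$, which itself acts as a block scalar.

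With these tools the argument should run by induction on $w_2+\rho_2$. When $w_2=\rho_2=0$ the highest weight vector is, up to sign, a single word $m$ in $z_1^+$ (occurring $w_1$ times) and $z_1^-$ (occurring $\rho_1$ times); evaluating $z_1^+\mapsto s$, $z_1^-\mapsto t$ and using the displayed product formula, each of the two block components of $m(s,t)$ carries the factor $\prod\xi_\bullet(v_\bullet)$ over the adjacent pairs of one fixed parity. A short counting lemma shows that whenever $w_1\ge\rho_1+3$ there is in \emph{any} arrangement an $ss$-adjacency at an odd position \emph{and} one at an even position: grouping the $s$'s into the $\rho_1+1$ runs delimited by the $t$'s and tracking the parity of the run-starting positions shows that the failure of this would force $w_1\le\rho_1+2$. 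Each $ss$-adjacency yields a pairing $\xi_s(v_s)=0$, so both block components vanish and $m\equiv 0$. For the inductive step one wants to show that, modulo $Id_2^*(M_{1,2}(F))$, a height-$2$ column of $\lambda(3)$ (resp. $\lambda(4)$) can be turned into a contiguous factor $[z_1^+,z_2^+]$ (resp. $[z_1^-,z_2^-]$), which by the second fact evaluates to a scalar multiple of the block scalar $D$ and can therefore be absorbed, leaving a highest weight vector of shape $(\emptyset,\emptyset,(w_1+w_2-1,w_2-1),(\rho_1+\rho_2,\rho_2))$ with the same difference $w_1-\rho_1\ge 3$.

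\textbf{The main obstacle} is precisely this last reduction: that modulo $Id_2^*(M_{1,2}(F))$ every highest weight vector of shape $(\emptyset,\emptyset,\lambda(3),\lambda(4))$ is a combination of ones in which the alternating pairs occur contiguously. Unlike in Proposition \ref{prop:13}, it is \emph{not} true that every monomial of the right multidegree is a $*$-identity — for instance $z_1^+z_1^-z_1^+z_2^-z_1^+\notin Id_2^*(M_{1,2}(F))$, while the alternated combination $\tilde z_1^-$, $\tilde z_2^-$ of it is — so the antisymmetry built into the highest weight vectors must be exploited essentially. Concretely, one needs to isolate the finite set of ``building-block'' $*$-identities that govern the mixed $z^+/z^-$ behaviour (the Example following Proposition \ref{prop:134} exhibits such a set in the smallest new case, with identities like $\tilde z_1^+z^-\tilde z_2^+z^-z^-$) and to prove, in the spirit of Theorem \ref{id}, that this set generates all $*$-identities in the relevant multidegrees; it is exactly here that the threshold $2$ versus $3$ must be pinned down. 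Carrying out this rewriting — equivalently, obtaining a complete description of $Id_2^*(M_{1,2}(F))$ restricted to multidegrees $(n_1,0,n_3,n_4)$ — is what I expect to be the hard part.
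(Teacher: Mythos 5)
The statement you are asked to prove is stated in the paper only as Conjecture \ref{conj:1}: the paper proves the implication $|w_1-\rho_1|\le 2\Rightarrow m_{\langle\lambda\rangle}\ne 0$ (Proposition \ref{prop:134}), supports the converse with a single example, and leaves the equivalence open. So there is no proof in the paper to compare against, and your submission is, by your own account, a plan rather than a proof. Within that plan, several ingredients are sound and genuinely go beyond what the paper records. The identification of the forward direction with Proposition \ref{prop:134} is correct. The model $(M_{1,2}(F))_1\cong B^*\oplus B$ with the product $u_1u_2=\xi_1(v_2)1_A\oplus(v_1\otimes\xi_2)$ is right; the two odd eigenspaces are indeed totally isotropic for $q(\xi\oplus v)=\xi(v)$ (which recovers $(z^{\pm})^3\equiv 0$), and since an antisymmetric bilinear map on a $2$-dimensional space factors through $\Lambda^2$, the commutator of two homogeneous odd elements of the same symmetry is a scalar multiple of $2e_{11}-e_{22}-e_{33}$, as you claim. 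Your parity count for the base case $w_2=\rho_2=0$ is also correct and is the most valuable part of the proposal: avoiding a same-letter adjacency among the pairs $(1,2),(3,4),\dots$ forces $|w_1-\rho_1|\le 1$, avoiding one among $(2,3),(4,5),\dots$ forces $|w_1-\rho_1|\le 2$, so for $|w_1-\rho_1|\ge 3$ both block components of every monomial vanish. This is exactly the mechanism that produces the threshold $2$ versus $3$.

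The genuine gap is the one you name yourself, and it is not a small one: the inductive step reducing $w_2$ (or $\rho_2$) requires showing that, modulo $Id_2^*(M_{1,2}(F))$, every highest weight vector of shape $(\emptyset,\emptyset,\lambda(3),\lambda(4))$ can be rewritten so that the alternating pairs occur as contiguous factors $[z_1^+,z_2^+]$ or $[z_1^-,z_2^-]$. As your own example $z_1^+z_1^-z_1^+z_2^-z_1^+\notin Id_2^*(M_{1,2}(F))$ shows, one cannot argue monomial by monomial here, so the antisymmetrization must be exploited through explicit rewriting identities (of the kind appearing in the Example after Proposition \ref{prop:134}), and no such generating set for the identities in multidegrees $(n_1,0,n_3,n_4)$ is produced. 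A second, smaller gap is the reduction to $\lambda(1)=\emptyset$: the swap device of Case 2 of Proposition \ref{prop:13} is verified in the paper only for a $y^+$ sitting between two $z^+$'s, and its extension "across the $z^-$-variables" is asserted without computation; since elements of $(M_{1,2}(F))_0^+$ do not commute with odd elements, this needs to be checked. Until both points are settled, the proposal establishes the conjecture only in the special case $\lambda(1)=\lambda(2)=\emptyset$, $w_2=\rho_2=0$, and otherwise remains a (plausible and well-motivated) strategy rather than a proof.
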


Given a real number $c$, let $\lceil c \rceil$ denote the ceiling of $c$, i.e., the smallest integer greater than or equal to $c$.

Now we consider the case $\lambda(2) \neq \emptyset$ and $\lambda(3) \neq \emptyset$.

\begin{proposition}\label{prop:123}
Let $\langle \lambda \rangle = (\lambda(1), \lambda(2), \lambda(3), \emptyset)$ in (\ref{eqn:ultimococarattere}), with $\lambda(1) = (\alpha_1 + \alpha_2, \alpha_2)$, $\lambda(2) = (\gamma_1 + \gamma_2 + \gamma_3, \gamma_2 + \gamma_3, \gamma_3) \neq \emptyset$, $\lambda(3) = (w_1 + w_2, w_2) \neq \emptyset$. If 
 \begin{itemize}
 
    \item $w_1 \leq 2$ or
    
    \item $w_1 \geq 3$ and $\gamma_1 + \gamma_2 \geq \lceil \frac{w_1}{2} \rceil - 1$
    
 \end{itemize} 
then $m_{\langle \lambda \rangle} \neq 0$.
\end{proposition}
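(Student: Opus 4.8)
The plan is to follow the strategy of Propositions~\ref{prop:13} and~\ref{prop:134}: by Theorem~\ref{th:cocharacter} and Theorem~\ref{th:hwv} it suffices, for each $\langle\lambda\rangle$ as in the statement, to produce one standard multitableau $T_{\langle\lambda\rangle}$ whose highest weight vector $f_{T_{\langle\lambda\rangle}}$ is \emph{not} a $*$-identity of $M_{1,2}(F)$; then $m_{\langle\lambda\rangle}=\bar{\bar m}_{\langle\lambda\rangle}\neq 0$. I would fix once and for all the evaluation elements $R_1=e_{11}$, $R_2=e_{22}+e_{33}\in(M_{1,2}(F))_0^+$, then $H=e_{22}-e_{33}$, $E=e_{23}$, $G=e_{32}\in(M_{1,2}(F))_0^-$, and $N_1=e_{12}-e_{31}$, $N_2=e_{13}+e_{21}\in(M_{1,2}(F))_1^+$, recording the handful of products that matter, namely $N_1^2=-e_{32}$, $N_1^3=0$, $e_{23}e_{32}=e_{22}$, $e_{32}e_{23}=e_{33}$, $N_1e_{11}N_1=-e_{32}$, $[N_1,N_2]^{w_2}=2^{w_2}e_{11}+(-1)^{w_2}(e_{22}+e_{33})$, $[E,H]=-2e_{23}$ and $St_3(H,E,G)=3(e_{22}+e_{33})$. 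By \eqref{eqn:highestweightvector} the initial highest weight vector of shape $\langle\lambda\rangle$ is
$$[y_1^+,y_2^+]^{\alpha_2}(y_1^+)^{\alpha_1}\;St_3(y_1^-,y_2^-,y_3^-)^{\gamma_3}[y_1^-,y_2^-]^{\gamma_2}(y_1^-)^{\gamma_1}\;[z_1^+,z_2^+]^{w_2}(z_1^+)^{w_1},$$
and any highest weight vector of that shape is obtained from it by permuting the places occupied by the variables; the whole point is to choose this place-permutation so that the monomials survive the evaluation above and the vanishing identities of Theorem~\ref{id}.

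For the case $w_1\le 2$ I would take the multitableaux already built in Proposition~\ref{prop:13} for the pair $(\lambda(1),\lambda(3))$ — whose highest weight vectors evaluate on $R_1,R_2,N_1,N_2$ to a nonzero elementary matrix — and weave in the $\lambda(2)$-part as one contiguous block inserted, say, just before the $z^+$-block: each column of height $3$ evaluated on $(H,E,G)$ contributes the central scalar block $3(e_{22}+e_{33})$, each column of height $2$ contributes a literal factor $[y_1^-,y_2^-]$ which on $(E,H)$ gives $-2e_{23}$, and each column of height $1$ contributes $e_{23}$. One then only checks that multiplying these into the product of Proposition~\ref{prop:13} still yields a nonzero $e_{ij}$, which holds because the $\lambda(2)$-block acts invertibly on each of the one-dimensional subspaces occurring there; this case is routine once the bookkeeping of Proposition~\ref{prop:13} is reproduced.

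The substantial case is $w_1\ge 3$, where $(z_1^+)^{w_1}\equiv 0$ by identity (b) of Theorem~\ref{id} and the run of $z_1^+$'s must be cut into blocks of length $\le 2$. Since $(z_1^+)^2$ is a skew element of degree zero, identity (d) forbids flanking a single $z_1^+$ by two degree-zero skew elements, and identity (f) forbids using a degree-zero skew element to split a $[z_1^+,z_2^+]$ pair; the usable separators are therefore exactly one $y_1^-$ for each height-one column of $\lambda(2)$ and one contiguous factor $[y_1^-,y_2^-]$ for each height-two column (each evaluating on $(E,H)$ to an ``$E$-like'' element), while the $\gamma_3$ columns of height $3$ are parked at one end as scalars. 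This gives $\gamma_1+\gamma_2$ separators, and cutting $w_1$ copies of $z_1^+$ into blocks of size $\le 2$, with all internal blocks of size exactly $2$, requires at least $\lceil w_1/2\rceil-1$ of them — precisely the hypothesis. Concretely I would choose the multitableau giving
$$f_{T_{\langle\lambda\rangle}}=\bigl(\lambda(1)\text{-block woven with one }z_1^+\bigr)\;St_3(\cdots)^{\gamma_3}\,(z_1^+)^{\varepsilon_0}u_1(z_1^+)^{2}u_2\cdots u_{k}(z_1^+)^{\varepsilon_1}\,[z_1^+,z_2^+]^{w_2},$$
with $k=\lceil w_1/2\rceil-1$, each $u_j$ a separator of the above type, and $\varepsilon_0,\varepsilon_1\in\{1,2\}$ fixed by the parity of $w_1$ so that the run adjacent to $[z_1^+,z_2^+]^{w_2}$ has length $1$ (to avoid recreating $(z_1^+)^3$); then evaluating $z_1^+=N_1$, $z_2^+=N_2$, $y_1^-=E$, the remaining $y^-$'s on $H$ and $G$, and the $y^+$'s on $R_2,R_1$, one checks that the evaluated product telescopes, via $N_1^2=-e_{32}$, $e_{32}e_{23}=e_{33}$ and $e_{23}e_{32}=e_{22}$, to a nonzero elementary matrix depending on $w_1\bmod 2$; the key point is that a block $(z_1^+)^2\,u_j\,(z_1^+)^2$ evaluates to $\pm e_{32}$ rather than $0$, so the separators never annihilate the product.

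I expect the genuine difficulty to lie entirely in the combinatorics of this last construction. One must choose the place-permutation so that (i) the polynomial written down really is $f_{\bar T_{\langle\lambda\rangle}}\sigma^{-1}$ for a \emph{standard} multitableau $T_{\langle\lambda\rangle}$, so that Theorem~\ref{th:hwv} applies; (ii) the two boxes of each height-two column of $\lambda(2)$ occupy adjacent places, so that the $[y_1^-,y_2^-]$ factor is literal and does not collapse when $y_1^-,y_2^-$ are evaluated on the (necessarily distinct) matrices $E$ and $H$; and (iii) the evaluated monomial never realizes any of the vanishing patterns (b)--(j) of Theorem~\ref{id} — in particular never $y^-z^+y^-$, never $\tilde z_1^+\,y^-\,\tilde z_2^+$, and never three consecutive $z_1^+$'s. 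Producing a single clean family of multitableaux valid simultaneously for all admissible $(\alpha_1,\alpha_2,\gamma_1,\gamma_2,\gamma_3,w_1,w_2)$, rather than an unwieldy case split, is where the real work lies; as in Proposition~\ref{prop:13} I would split on the parity of $w_1$ through $\varepsilon_0,\varepsilon_1$ in order to land on a definite $e_{ij}$.
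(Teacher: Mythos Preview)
Your strategy is the same as the paper's: exhibit one multitableau $T_{\langle\lambda\rangle}$ whose highest weight vector has a nonzero evaluation, using the $y^-$-columns of $\lambda(2)$ as separators to cut the run $(z_1^+)^{w_1}$ into blocks of length $\le 2$. The paper also splits on $w_1\le 2$ versus $w_1\ge 3$, and within the latter on the parity of $w_1$ (writing $w_1=2k$ or $w_1=2k-1$ with $k=\lceil w_1/2\rceil$), exactly as you suggest through your $\varepsilon_0,\varepsilon_1$. The paper's evaluation basis for $(M_{1,2})_0^-$ is $M_1=e_{23}+e_{32}$, $M_2=e_{23}-e_{32}$, $M_3=e_{22}-e_{33}$ rather than your $H,E,G$, and it places the $[z_1^+,z_2^+]^{w_2}(z_1^+)^2$ block before the separated tail rather than after; these are cosmetic differences, and your key computation $N_1^2\,e_{23}\,N_1^2=e_{32}$ is correct and is the mechanism that makes either arrangement work.

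Two points deserve correction. First, your worry (i) about standardness is unnecessary: Theorem~\ref{th:hwv} only asks for \emph{some} multitableau $T_{\langle\lambda\rangle}$ with $f_{T_{\langle\lambda\rangle}}\notin Id_2^*(M_{1,2}(F))$, not a standard one, and the paper's constructions do not rely on standardness either. Second, and more substantively, you only place $k=\lceil w_1/2\rceil-1$ separators but you have $\gamma_1+\gamma_2\ge k$ height-$1$ and height-$2$ columns in $\lambda(2)$; the surplus columns must go somewhere in the polynomial, and you have not said where. The paper handles this by cutting $D_{\lambda(2)}=D_\mu\,|\,D_\nu$ so that $D_\nu$ carries exactly the $k-1$ columns used as separators and $D_\mu$ (together with all the height-$3$ columns) is evaluated as a single contiguous block $f_{\bar T_\mu}(y_1^-,y_2^-,y_3^-)$ at the front; it then distinguishes $\gamma_1\ge k-1$ (all separators are single $y_1^-$'s, evaluation on $(M_1,M_2,M_3)$) from $\gamma_1<k-1$ (some separators are $[y_1^-,y_2^-]$, evaluation on $(M_1+M_3,M_2,M_3)$). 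Your sketch would go through once you specify this placement and verify that the leading $f_{\bar T_\mu}$ block does not annihilate the product; as written, it is a correct plan but not yet a proof.
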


\begin{proof}
Consider the elements $R_1 = e_{11}$, $R_2 = e_{22} + e_{33}$, $M_1 = e_{23} + e_{32}$, $M_2 = e_{23} - e_{32}$, $M_3 = e_{22} - e_{33}$, $N_1 = e_{12} - e_{31}$, \ $N_2 = e_{13} + e_{21}$.
First, suppose $w_1 \leq 2$ and we distinguish two cases.
\\Let $\lambda(1) = \emptyset$ and consider the initial standard multitableau. Hence  
        $$f_{\bar{T}_{(\emptyset, \lambda(2), \lambda(3), \emptyset)}}(y_1^-, y_2^-, y_3^-, z_1^+, z_2^+) = f_{\bar{T}_{\lambda(2)}}(y_1^-, y_2^-, y_3^-) f_{\bar{T}_{\lambda(3)}}(z_1^+, z_2^+)$$
    and
     $$f_{\bar{T}_{(\emptyset, \lambda(2), \lambda(3), \emptyset)}}(M_1, M_2, M_3, N_1, N_2) = 
         \begin{cases}
          r (
          %(-1)^{\gamma_2} 
          \pm e_{22} \pm e_{33})
          %(2^{w_2} e_{11} + (-1)^{w_2} e_{22}  
          %+ (-1)^{w_2} e_{33})
          (e_{12} - e_{31})^{w_1} \neq 0, & \text{if } \gamma_1 \text{ is even}, \\
          r 
          %((-1)^{\gamma_2} 
          (\pm e_{23} \pm e_{32})
          %(2^{w_2} e_{11} + (-1)^{w_2} e_{22} + (-1)^{w_2}e_{33})
          (e_{12} - e_{31})^{w_1}  \neq 0, & \text{if } \gamma_1 \text{ is odd},
         \end{cases}$$
    where $r = \pm 6^{\gamma_3} 2^{\gamma_2}$.
  \\Then, let $\lambda(1) \neq \emptyset$. We consider the multitableau
      $$T_{(\lambda(1), \lambda(2), \lambda(3), \emptyset)} = (T_{\lambda(1)}, \bar{T}_{\lambda(2)}, T_{\lambda(3)}, \emptyset),$$
  where $\bar{T}_{\lambda(2)}$ is the initial standard tableau on the integers $1$, $\dots$, $3 \gamma_3 + 2 \gamma_2 + \gamma_1$ and $T_{\lambda(1)}$ and $T_{\lambda(3)}$ are the tableaux we constructed in the proof of Proposition \ref{prop:13} on the integers $3 \gamma_3 + 2 \gamma_2 + \gamma_1 + 1$, $\dots$, $n$.
  Hence  
  \begin{center}
     $f_{T_{(\lambda(1), \lambda(2), \lambda(3), \emptyset)}}(y_1^+, y_2^+, y_1^-, y_2^-, y_3^-, z_1^+, z_2^+) = f_{\bar{T}_{\lambda(2)}}(y_1^-, y_2^-, y_3^-) f_{T_{(\lambda(1), \emptyset, \lambda(3), \emptyset)}}(y_1^+, y_2^+, z_1^+, z_2^+)$,
  \end{center}
 where $f_{T_{(\lambda(1), \emptyset, \lambda(3), \emptyset)}}(y_1^+, y_2^+, z_1^+, z_2^+)$ are the highest weight vectors obtained in Proposition \ref{prop:13}.
 \\If $w_1 = 0$, then 
     $$f_{T_{(\lambda(1), \lambda(2), \lambda(3), \emptyset)}}(R_2, R_1, M_1, M_2, M_3, N_1, N_2) = 
     \begin{cases} 
      r (e_{22} \pm e_{33}) \neq 0, & \text{if } \gamma_1 \text{ is even}, \\
      r (e_{23} \pm e_{32}) \neq 0, & \text{if } \gamma_1 \text{ is odd},      
     \end{cases}$$
 if $w_1 = 1$, then
     $$f_{T_{(\lambda(1), \lambda(2), \lambda(3), \emptyset)}}(R_2, R_1, M_1, M_2, M_3, N_1, N_2) = 
     \begin{cases} 
      r e_{31} \neq 0, & \text{if } \gamma_1 \text{ is even}, \\
      r e_{21} \neq 0, & \text{if } \gamma_1 \text{ is odd},      
     \end{cases}$$
 if $w_1 = 2$, then
     $$f_{T_{(\lambda(1), \lambda(2), \lambda(3), \emptyset)}}(R_2, R_1, M_1, M_2, M_3, N_1, N_2) = 
     \begin{cases} 
      r e_{32} \neq 0, & \text{if } \gamma_1 \text{ is even}, \\
      r e_{22} \neq 0, & \text{if } \gamma_1 \text{ is odd},
     \end{cases}$$
 where $r = \pm 6^{\gamma_3} 2^{\gamma_2}$.

 Now, suppose $w_1 \geq 3$ and $\gamma_1 + \gamma_2 \geq k - 1$, with $\lceil \frac{w_1}{2} \rceil = k$. We consider the Young diagram $D_{\lambda(2)} = D_\mu | D_\nu$, corresponding to the partition $\lambda(2)$, as obtained by gluing two diagrams $D_\mu$ and $D_\nu$, where $D_\mu$ is the subdiagram of $D_{\lambda(2)}$ made up of $\gamma_1 + \gamma_2 + \gamma_3 - (k - 1)$ columns and $D_\nu$ is the diagram made up of the last $k - 1$ columns.
 We also consider the Young diagram $D_{\lambda(3)} = D_\beta | D_\delta$ obtained by gluing the diagram $D_\beta$ that has $w_2$ columns of height two and two columns of height one and the diagram $D_\delta$ that has $w_1 - 2$ columns of height one. We define $t_i$ as the number of columns of height $i$ in $D_\mu$, with $t_i \leq \gamma_i$, for $i \in \{ 1, 2 \}$, such that $t_1 + t_2 = \gamma_1 + \gamma_2 - (k + 1)$.
 We distinguish the following cases.
    \begin{itemize}
 
    \item $w_1 = 2k$.
      \\If $\gamma_1 \geq k - 1$, we consider the multitableau $T_{(\lambda(1), \lambda(2), \lambda(3), \emptyset)}$ such that
            $$f_{T_{(\lambda(1), \lambda(2), \lambda(3), \emptyset)}}(y_1^+, y_2^+, y_1^-, y_2^-, y_3^-, z_1^+, z_2^+) =$$
            $$\underbrace{\tilde{y}_1^+ \dots \tilde{\tilde{y}}_1^+}_{\alpha_2}(y_1^+)^{\alpha_1} f_{\bar{T}_{\mu}}(y_1^-, y_2^-, y_3^-) f_{\bar{T}_{\beta}}(z_1^+, z_2^+) (y_1^- (z_1^+)^2)^{k - 2} y_1^- z_1^+ \underbrace{\tilde{y}_2^+ \dots \tilde{\tilde{y}}_2^+}_{\alpha_2} z_1^+.$$
        Then
            $$f_{T_{(\lambda(1), \lambda(2), \lambda(3), \emptyset)}}(R_2, R_1, M_1, M_2, M_3, N_1, N_2) = 
            \begin{cases} 
             r e_{32} \neq 0, & \text{if } \gamma_1 - k + 1 \text{ is even}, \\
             r e_{22} \neq 0, & \text{if } \gamma_1 - k + 1 \text{ is odd},
            \end{cases}$$
        where $r = \pm 6^{\gamma_3} 2^{\gamma_2}$.

        If $0 \leq \gamma_1 < k - 1$, we consider the multitableau $T_{(\lambda(1), \lambda(2), \lambda(3), \emptyset)}$ such that
            $$f_{T_{(\lambda(1), \lambda(2), \lambda(3), \emptyset)}}(y_1^+, y_2^+, y_1^-, y_2^-, y_3^-, z_1^+, z_2^+) =$$
            $$\underbrace{\tilde{y}_1^+ \dots \tilde{\tilde{y}}_1^+}_{\alpha_2}(y_1^+)^{\alpha_1} f_{\bar{T}_{\mu}}(y_1^-, y_2^-, y_3^-) f_{\bar{T}_{\beta}}(z_1^+, z_2^+) ([y_1^-, y_2^-] (z_1^+)^2)^{k - 2 - \gamma_1} [y_1^-, y_2^-] z_1^+ \underbrace{\tilde{y}_2^+ \dots \tilde{\tilde{y}}_2^+}_{\alpha_2} z_1^+ (y_1^- (z_1^+)^2)^{\gamma_1}.$$
        Then
            $$f_{T_{(\lambda(1), \lambda(2), \lambda(3), \emptyset)}}(R_2, R_1, M_1 + M_3, M_2, M_3, N_1, N_2) = 
            \begin{cases} 
             \pm 6^{\gamma_3} 2^{\frac{1}{2} (s + 1)} e_{32} \neq 0, & \text{if } t_1 + t_2 \text{ is even}, \\
             \pm 6^{\gamma_3} 2^{\frac{1}{2} s} (e_{22} + e_{32}) \neq 0, & \text{if } t_1 + t_2 \text{ is odd},
            \end{cases}$$
        where $s = 3 \gamma_2 - k + \gamma_1$.
        
    \item $w_1 = 2k - 1$.
    \\We consider the multitableau $T_{(\lambda(1), \lambda(2), \lambda(3), \emptyset)}$ such that, if $\gamma_1 \neq 0$,
        $$f_{T_{(\lambda(1), \lambda(2), \lambda(3), \emptyset)}}(y_1^+, y_2^+, y_1^-, y_2^-, y_3^-, z_1^+, z_2^+) =$$
        $$\underbrace{\tilde{y}_1^+ \dots \tilde{\tilde{y}}_1^+}_{\alpha_2} (y_1^+)^{\alpha_1}   f_{\bar{T}_{\mu}}(y_1^-, y_2^-, y_3^-) f_{\bar{T}_{\beta}}(z_1^+, z_2^+) ([y_1^-, y_2^-] (z_1^+)^2)^{\gamma_2 - t_2} (y_1^- (z_1^+)^2)^{\gamma_1 - t_1 - 1} y_1^- z_1^+ \underbrace{\tilde{y}_2^+ \dots \tilde{\tilde{y}}_2^+}_{\alpha_2}$$
    and, if $\gamma_1 = 0$,
            $$f_{T_{(\lambda(1), \lambda(2), \lambda(3), \emptyset)}}(y_1^+, y_2^+, y_1^-, y_2^-, y_3^-, z_1^+, z_2^+) =$$
            $$\underbrace{\tilde{y}_1^+ \dots \tilde{\tilde{y}}_1^+}_{\alpha_2} (y_1^+)^{\alpha_1} f_{\bar{T}_{\mu}}(y_1^-, y_2^-, y_3^-) f_{\bar{T}_{\beta}}(z_1^+, z_2^+) ([y_1^-, y_2^-] (z_1^+)^2)^{\gamma_2 - t_2 - 1} [y_1^-, y_2^-] z_1^+ \underbrace{\tilde{y}_2^+ \dots \tilde{\tilde{y}}_2^+}_{\alpha_2}.$$

    So, if $\gamma_1 \geq k - 1$,
        $$f_{T_{(\lambda(1), \lambda(2), \lambda(3), \emptyset)}}(R_2, R_1, M_1, M_2, M_3, N_1, N_2) = 
        \begin{cases} 
         r e_{31} \neq 0, & \text{if } \gamma_1 - k + 1 \text{ is even}, \\
         r e_{21} \neq 0, & \text{if } \gamma_1 - k + 1 \text{ is odd}      
        \end{cases}$$
    and, if $0 \leq \gamma_1 < k - 1$,
        $$f_{T_{(\lambda(1), \lambda(2), \lambda(3), \emptyset)}}(R_2, R_1, M_1 + M_3, M_2, M_3, N_1, N_2) = 
        \begin{cases} 
         \pm 6^{\gamma_3} 2^{\frac{1}{2} (s + 1)} e_{31} \neq 0, & \text{if } t_1 + t_2 \text{ is even}, \\
         \pm 6^{\gamma_3} 2^{\frac{1}{2} s} (e_{21} + e_{31}) \neq 0, & \text{if } t_1 + t_2 \text{ is odd},
        \end{cases}$$
    where $r = \pm 6^{\gamma_3} 2^{\gamma_2}$ and $s = 3 \gamma_2 - k + \gamma_1$.
\end{itemize}\end{proof}

Consider the following example.

\begin{example}
    Let $\langle \lambda \rangle = (\emptyset, \lambda(2), (w_1), \emptyset)$, with $\lambda(2) = (\gamma_1 + \gamma_2 + \gamma_3, \gamma_2 + \gamma_3, \gamma_3) \vdash s$, $w_1 \geq 5$ and $s < \lceil \frac{w_1}{2} \rceil - 1$. Then $m_{\langle \lambda \rangle} = 0$, since, for any $T_{\langle \lambda \rangle}$, each monomial of $f_{T_{\langle \lambda \rangle}}$ contains at least three consecutive odd symmetric variables and, so, it is a $*$-identity (see Theorem \ref{id} (b)). 
    \\Notice that, if $\langle \lambda \rangle = (\emptyset, (1, 1, 1), (3), \emptyset)$ we also get that $m_{\langle \lambda \rangle} = 0$, since any highest weight vector $f_{T_{\langle \lambda \rangle}}$ is a consequence of the following set of $*$-identities 
         $$\big\{ [y^+, y], \ \ \ z_1^+ z_2^+ z_3^+ + z_2^+ z_3^+ z_1^+ + z_3^+ z_1^+ z_2^+, \ \ \ y_1^- z^+ y_2^-, \ \ \ \tilde{y}^-_1 \tilde{y}^-_2 z^+ z^+ \tilde{y}^-_3 z^+,$$
         $$\tilde{y}^-_1 z^+ z^+ \tilde{y}^-_2 \tilde{y}^-_3 z^+, \ \ \ z^+ \tilde{y}^-_1 \tilde{y}^-_2 z^+ z^+ \tilde{y}^-_3, \ \ \ z^+ \tilde{y}^-_1 z^+ z^+ \tilde{y}^-_2 \tilde{y}^-_3 \big\}.$$ 
\end{example}

Motivated by the above example, we make the following conjecture.

\begin{conjecture}\label{conj:2}
Let $\langle \lambda \rangle = (\lambda(1), \lambda(2), \lambda(3), \emptyset)$ in (\ref{eqn:ultimococarattere}), with $\lambda(1) = (\alpha_1 + \alpha_2, \alpha_2)$, $\lambda(2) = (\gamma_1 + \gamma_2 + \gamma_3, \gamma_2 + \gamma_3, \gamma_3) \neq \emptyset$, $\lambda(3) = (w_1 + w_2, w_2) \neq \emptyset$. Then
 \begin{itemize}

    \item $w_1 \leq 2$ or
    
    \item $w_1 \geq 3$ and $\gamma_1 + \gamma_2 \geq \lceil \frac{w_1}{2} \rceil - 1$
    
 \end{itemize} 
if and only if $m_{\langle \lambda \rangle} \neq 0$.
\end{conjecture}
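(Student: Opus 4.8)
The plan is to prove the two implications separately. That the displayed numerical conditions force $m_{\langle\lambda\rangle}\neq0$ is exactly Proposition \ref{prop:123}, so the new content is the converse: if $w_1\geq3$ and $\gamma_1+\gamma_2\leq\lceil w_1/2\rceil-2$, then $m_{\langle\lambda\rangle}=0$. By Theorem \ref{th:cocharacter} and Theorem \ref{th:hwv} this reduces to showing that, under these hypotheses, \emph{every} highest weight vector $f_{T_{\langle\lambda\rangle}}$ attached to a standard multitableau of shape $\langle\lambda\rangle=(\lambda(1),\lambda(2),\lambda(3),\emptyset)$ lies in $Id_2^*(M_{1,2}(F))$; as in the converse part of Proposition \ref{prop:13} and in the examples following Propositions \ref{prop:134} and \ref{prop:123}, it suffices to show that every monomial occurring in such an $f_{T_{\langle\lambda\rangle}}$ is a consequence of the $*$-identities in $\mathcal{I}$, possibly together with finitely many immediate consequences of $\mathcal{I}$ of the kind written out in those examples (such as $\tilde z_1^+y^-\tilde z_2^+z^-z^-$ or $\tilde y_1^-z^+z^+\tilde y_2^-\tilde y_3^-z^+$). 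A first reduction is to the case $\lambda(1)=\emptyset$: since $[y^+,y]\in\mathcal{I}$, the even symmetric variables coming from $\lambda(1)$ commute with the even part of $M_{1,2}(F)$ and can be pushed to the ends of each monomial, exactly as in Case $2$ of the proof of Proposition \ref{prop:13} and in the $\lambda(1)\neq\emptyset$ subcases of Propositions \ref{prop:13} and \ref{prop:123}, so that a monomial of $f_{T_{(\lambda(1),\lambda(2),\lambda(3),\emptyset)}}$ is a consequence of $\mathcal{I}$ as soon as the monomial obtained by deleting the $y^+$'s is.

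The heart of the matter is then a counting argument on the surplus odd symmetric variables. The factor of $f_{\bar T_{\langle\lambda\rangle}}$ coming from $\lambda(3)$ is $[z_1^+,z_2^+]^{w_2}(z_1^+)^{w_1}$, so every monomial of $f_{T_{\langle\lambda\rangle}}$ carries $w_1$ ``surplus'' occurrences of $z_1^+$ besides the $w_2$ paired occurrences; since $(z^+)^3$ and $z_1^+z_2^+z^--z_2^+z^-z_1^++z^-z_1^+z_2^+$ are (consequences of) members of $\mathcal{I}$ and $(z^+)^2\in(M_{1,2}(F))_0^-$, these surplus variables must be split into runs of length at most two, and two consecutive runs cannot be kept apart by a single even skew variable (this uses $y_1^-z^+y_2^-\equiv0$ and $\tilde z_1^+y^-\tilde z_2^+\equiv0$). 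One then checks that each ``separator'' of two surplus runs must absorb an entire column of $\lambda(2)$ of height at most two — a single $y_1^-$ from a height-one column or a commutator $[y_1^-,y_2^-]$ from a height-two column — whereas a column of height three, whose factor $St_3(y_1^-,y_2^-,y_3^-)$ evaluates on skew even elements to a multiple of the symmetric even element $e_{22}+e_{33}$, cannot be used this way: whatever the positions of its three alternated boxes, the resulting $y^-$'s together with the surplus $z^+$'s are forced by the relations of $\mathcal{I}$ to collapse (for instance $z^+St_3(y_1^-,y_2^-,y_3^-)(z^+)^2\equiv0$ and $\tilde y_1^-z^+z^+\tilde y_2^-\tilde y_3^-z^+\equiv0$). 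Counting runs and separators, a monomial surviving modulo $\mathcal{I}$ requires at least $\lceil w_1/2\rceil-1$ columns of $\lambda(2)$ of height $\leq2$, i.e. $\gamma_1+\gamma_2\geq\lceil w_1/2\rceil-1$; under the negation of this every monomial of every $f_{T_{\langle\lambda\rangle}}$ is a consequence of $\mathcal{I}$, whence $m_{\langle\lambda\rangle}=0$. I would carry out the bookkeeping by a double induction, on the number of $z_2^+$'s (as in Proposition \ref{prop:13}) and on $w_1$, treating $w_1=2k$ and $w_1=2k-1$ separately and splitting according to the parity of the number of $z^+$'s between consecutive even skew variables, precisely the dichotomy that appeared in Proposition \ref{prop:123}.

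The step I expect to be the main obstacle is the rigorous proof that a height-three column of $\lambda(2)$ can never contribute a usable separator. Unlike in Proposition \ref{prop:123}, where one only exhibits favourable multitableaux, here one must rule out \emph{all} standard multitableaux, and $St_3(y_1^-,y_2^-,y_3^-)$ is not itself a $*$-identity, so it cannot simply be discarded; one has to show that in every position its three alternated boxes, in combination with the surplus $z^+$'s, produce a consequence of $\mathcal{I}$, and to control the interaction of this block both with $\lambda(1)$ and with the $w_2$ commutators $[z_1^+,z_2^+]$. The cleanest route I can see is to first determine a finite set of multihomogeneous generators of $Id_2^*(M_{1,2}(F))\cap W_{(n_1,n_2,n_3,0)}$ — an enlargement of $\mathcal{I}$ of the type appearing in the examples following Proposition \ref{prop:123} — and then reduce the statement to a finite normal-form computation; producing and verifying such a generating set is exactly where the remaining work lies.
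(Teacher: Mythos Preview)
The statement you are trying to prove is labeled \emph{Conjecture} in the paper and is not proved there. The paper establishes only the forward implication (your first sentence is right: that is exactly Proposition~\ref{prop:123}) and then motivates the converse with a single example ($\langle\lambda\rangle=(\emptyset,(1,1,1),(3),\emptyset)$ and the family $\lambda(3)=(w_1)$ with $|\lambda(2)|<\lceil w_1/2\rceil-1$), after which it explicitly records the biconditional as an open problem. So there is no ``paper's own proof'' to compare your proposal against.

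As for the proposal itself, you are honest that it is a strategy rather than a proof, and you correctly identify the crux. Let me sharpen the main difficulty. Your separator picture treats a column of $\lambda(2)$ as a contiguous block sitting between two runs of surplus $z_1^+$'s, but for a general standard multitableau the positions carrying the alternated variables of a given column of $\lambda(2)$ need not be adjacent at all: the permutation $\sigma$ taking $\bar T_{\langle\lambda\rangle}$ to $T_{\langle\lambda\rangle}$ can scatter the three boxes of a height-three column among arbitrary positions of the monomial, interleaved with $z^+$'s and with boxes from other columns. So the statement ``each separator must absorb an entire column of height $\leq2$'' is not a statement about monomials of $f_{T_{\langle\lambda\rangle}}$ but a statement about the \emph{alternation pattern}, and it is precisely the passage from one to the other that is missing. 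Concretely, to rule out a height-three column as a separator you would need identities governing configurations such as $\tilde y_1^- z^+ \tilde y_2^- z^+ z^+ \tilde y_3^-$ or $z^+\tilde y_1^- z^+\tilde y_2^- z^+\tilde y_3^-$ with the three tildes linked, not just the adjacent case $z^+St_3(y_1^-,y_2^-,y_3^-)(z^+)^2$; the paper's example already needed four auxiliary identities of this scattered type for the single case $\lambda(2)=(1,1,1)$, $\lambda(3)=(3)$. Your final paragraph proposes to produce a finite generating set for $Id_2^*(M_{1,2}(F))\cap W_{(n_1,n_2,n_3,0)}$ uniformly in $n_1,n_2,n_3$ and then reduce to a normal-form computation; that is a reasonable plan, but it is the whole problem, not a residual detail, and the paper gives no indication that such a generating set is known.
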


According to Theorem \ref{th:id}, if $\lambda(2) \neq \emptyset$ and $\lambda(4) \neq \emptyset$, we obtain the following result.

\begin{proposition}\label{prop:124}
If $\langle \lambda \rangle = (\lambda(1), \lambda(2), \emptyset, \lambda(4))$ in (\ref{eqn:ultimococarattere}), with $\lambda(2) = (\gamma_1 + \gamma_2 + \gamma_3, \gamma_2 + \gamma_3, \gamma_3) \neq \emptyset$ $\lambda(4) = (\rho_1 + \rho_2, \rho_2) \neq \emptyset$ and 
 \begin{itemize}

    \item $\rho_1 \leq 2$ or
    
    \item $\rho_1 \geq 3$ and $\gamma_1 + \gamma_2 \geq \lceil \frac{\rho_1}{2} \rceil - 1$,
    
 \end{itemize}
then $m_{\langle \lambda \rangle} \neq 0$.
\end{proposition}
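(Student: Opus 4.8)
The plan is to deduce this from Proposition \ref{prop:123} by means of the automorphism $\tilde{\varphi}$ together with Theorem \ref{th:id}. Recall that $\tilde{\varphi}\colon F\langle Y\cup Z,*\rangle \to F\langle Y\cup Z,*\rangle$ fixes every variable $y^+$ and $y^-$ and interchanges $z_i^+ \leftrightarrow z_i^-$ for all $i$; it is an isomorphism of the free $*$-superalgebra, and by Theorem \ref{th:id} it maps $Id_2^*(M_{1,2}(F))$ onto itself. Moreover $\tilde{\varphi}$ commutes with the right $S_n$-action on places and carries the space of multihomogeneous $*$-polynomials of multidegree $(n_1,n_2,n_3,n_4)$ isomorphically onto the one of multidegree $(n_1,n_2,n_4,n_3)$.

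The key point is that $\tilde{\varphi}$ sends a highest weight vector associated with a multipartition $(\lambda(1),\lambda(2),\lambda(3),\lambda(4))$ to a highest weight vector associated with $(\lambda(1),\lambda(2),\lambda(4),\lambda(3))$. For the initial highest weight vector this follows at once from the explicit formula \eqref{eqn:highestweightvector}: under $\tilde{\varphi}$ the products of standard polynomials built on the columns of $\lambda(1)$ and $\lambda(2)$ in the variables $y^+,y^-$ are unchanged, while the product built on the columns of $\lambda(3)$ in the $z^+$ becomes the analogous product in the $z^-$, and symmetrically for $\lambda(4)$; hence $\tilde{\varphi}\big(f_{\bar T_{(\lambda(1),\lambda(2),\lambda(3),\lambda(4))}}\big)=f_{\bar T_{(\lambda(1),\lambda(2),\lambda(4),\lambda(3))}}$, and the same holds for an arbitrary standard multitableau because the action on places is preserved. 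Since $\tilde{\varphi}$ is bijective it preserves linear independence modulo $Id_2^*(M_{1,2}(F))$, so by Theorem \ref{th:hwv} (together with Theorem \ref{th:cocharacter}) one obtains
$$m_{(\lambda(1),\lambda(2),\emptyset,\lambda(4))} = m_{(\lambda(1),\lambda(2),\lambda(4),\emptyset)}.$$

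It would then suffice to apply Proposition \ref{prop:123} to the multipartition $(\lambda(1),\lambda(2),\lambda(4),\emptyset)$, in which $\lambda(4)=(\rho_1+\rho_2,\rho_2)$ plays the role of the partition $\lambda(3)=(w_1+w_2,w_2)$ there. The hypothesis assumed here — namely $\rho_1\le 2$, or $\rho_1\ge 3$ together with $\gamma_1+\gamma_2\ge\lceil\rho_1/2\rceil-1$ — is precisely the hypothesis of Proposition \ref{prop:123} with $w_1$ replaced by $\rho_1$ (and $w_2$ by $\rho_2$); hence $m_{(\lambda(1),\lambda(2),\lambda(4),\emptyset)}\ne 0$, and consequently $m_{\langle\lambda\rangle}\ne 0$.

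I expect the main obstacle to be the middle step: one must check that the correspondence between irreducible $GL_m^4$-characters and multipartitions, and the characterization of the multiplicities via highest weight vectors, behave as expected under the relabelling of the variable types induced by $\tilde{\varphi}$. Once this compatibility is pinned down, the reduction to Proposition \ref{prop:123} is immediate, and no new evaluations in $M_{1,2}(F)$ are needed.
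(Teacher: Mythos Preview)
Your proposal is correct and follows exactly the route the paper takes: the paper does not give a separate proof of Proposition~\ref{prop:124} but simply states that it follows from Proposition~\ref{prop:123} ``according to Theorem~\ref{th:id}''. You have spelled out the details the paper leaves implicit, namely that $\tilde{\varphi}$ carries highest weight vectors for $(\lambda(1),\lambda(2),\emptyset,\lambda(4))$ to highest weight vectors for $(\lambda(1),\lambda(2),\lambda(4),\emptyset)$ and preserves membership in $Id_2^*(M_{1,2}(F))$; the ``obstacle'' you flag is not a real difficulty here, since with $\lambda(3)=\emptyset$ the swap of the $z^+$ and $z^-$ blocks in \eqref{eqn:highestweightvector} is literally a relabelling.
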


Now we consider the case $\lambda(2) \neq \emptyset$, $\lambda(3) \neq \emptyset$ and $\lambda(4) \neq \emptyset$.

\begin{proposition}\label{prop:234}
Let $\langle \lambda \rangle = (\emptyset, \lambda(2), \lambda(3), \lambda(4))$ in (\ref{eqn:ultimococarattere}), with $\lambda(2) = (\gamma_1 + \gamma_2 + \gamma_3, \gamma_2 + \gamma_3, \gamma_3) \neq \emptyset$, $\lambda(3) = (w_1 + w_2, w_2) \neq \emptyset$, $\lambda(4) = (\rho_1 + \rho_2, \rho_2) \neq \emptyset$ and let $l = max\{w_1, \rho_1\}$. If 
 \begin{itemize}
     \item $|w_1 - \rho_1| \leq 2$ or
     \item $|w_1 - \rho_1| \geq 3$ and $\gamma_1 + \gamma_2 \geq \lceil \frac{l}{2} \rceil - 1$,
  \end{itemize}
then $m_{\langle \lambda \rangle} \neq 0$.
\end{proposition}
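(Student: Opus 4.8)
The plan is to argue in the style of Propositions~\ref{prop:13}, \ref{prop:123} and~\ref{prop:134}. By Theorems~\ref{th:cocharacter} and~\ref{th:hwv} it is enough to exhibit, for the multipartition $\langle\lambda\rangle=(\emptyset,\lambda(2),\lambda(3),\lambda(4))$, a single multitableau $T_{\langle\lambda\rangle}$ whose highest weight vector $f_{T_{\langle\lambda\rangle}}$ is not a $*$-identity of $M_{1,2}(F)$. Since $|w_1-\rho_1|$, $l=\max\{w_1,\rho_1\}$ and the inequality $\gamma_1+\gamma_2\ge\lceil\frac{l}{2}\rceil-1$ are symmetric in $w_1$ and $\rho_1$, and since $\tilde\varphi$ interchanges the $z^+$- and $z^-$-blocks while fixing the $y^-$-block, Theorem~\ref{th:id} lets us assume $w_1\ge\rho_1$, so $l=w_1$; the case $\rho_1\ge w_1$ then follows. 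I would work with the evaluation matrices already used above: $M_1=e_{23}+e_{32}$, $M_2=e_{23}-e_{32}$, $M_3=e_{22}-e_{33}$ substituted for the $y^-$-variables, $N_1=e_{12}-e_{31}$, $N_2=e_{13}+e_{21}$ for $z_1^+,z_2^+$, and $P_1=e_{13}-e_{21}$, $P_2=e_{12}+e_{31}$ for $z_1^-,z_2^-$. Recall also that, as $T_{\langle\lambda\rangle}$ ranges over multitableaux of shape $\langle\lambda\rangle$, the vector $f_{T_{\langle\lambda\rangle}}=f_{\bar T_{\langle\lambda\rangle}}\sigma^{-1}$ ranges over all permutations of places of the initial highest weight vector \eqref{eqn:highestweightvector}, so I am free to order and interleave the $y^-$-, $z^+$- and $z^-$-blocks as convenient.

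If $|w_1-\rho_1|\le 2$, the excess of $z^+$-variables over $z^-$-variables can be absorbed by pairing alone, so the $\lambda(2)$-block is inessential. I would take $T_{\langle\lambda\rangle}$ with
$$f_{T_{\langle\lambda\rangle}}=f_{\bar T_{\lambda(2)}}(y_1^-,y_2^-,y_3^-)\cdot g(z_1^+,z_2^+,z_1^-,z_2^-),$$
the $\lambda(2)$-block possibly moved inside the odd word, where $g$ is one of the highest weight vectors $f_{T_{(\emptyset,\emptyset,\lambda(3),\lambda(4))}}$ produced in the $\lambda(1)=\emptyset$ part of the proof of Proposition~\ref{prop:134} (it interleaves the $z_1^-z_1^+$ pairs and leaves at most two extra $z_1^+$'s). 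Under the above substitutions $f_{\bar T_{\lambda(2)}}(M_1,M_2,M_3)$ equals $\pm 6^{\gamma_3}2^{\gamma_2}(\pm e_{22}\pm e_{33})$ when $\gamma_1$ is even and $\pm 6^{\gamma_3}2^{\gamma_2}(\pm e_{23}\pm e_{32})$ when $\gamma_1$ is odd (as in Proposition~\ref{prop:123}), while $g$ evaluates to one of $\pm e_{33}$, $\pm e_{31}$, $\pm e_{32}$ (as in Proposition~\ref{prop:134}); I would then check that, for a suitable placement of the $\lambda(2)$-block relative to $g$ — and replacing $M_1$ by $M_1+M_3$ in the parity-odd subcases, as in Proposition~\ref{prop:123} — the product of these two matrices is a nonzero matrix unit.

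If $|w_1-\rho_1|\ge 3$ (so $w_1\ge\rho_1+3\ge 4$ and $l=w_1$) and $\gamma_1+\gamma_2\ge\lceil\frac{w_1}{2}\rceil-1$, I would build $T_{\langle\lambda\rangle}$ by blending the $w_1\ge 3$ constructions of Proposition~\ref{prop:123} — which use the height-one and height-two columns of $D_{\lambda(2)}$ to split the ``singleton'' $z_1^+$'s into chunks $y_1^-(z_1^+)^2$ and $[y_1^-,y_2^-](z_1^+)^2$, and where the bound $\gamma_1+\gamma_2\ge\lceil\frac{w_1}{2}\rceil-1$ is exactly what is required — with the $z_1^-z_1^+$ interleaving of Proposition~\ref{prop:134}, which disposes of the odd skew variables. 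The word is to be arranged so that no monomial of $f_{T_{\langle\lambda\rangle}}$ contains three consecutive $z_1^+$'s or three consecutive $z_1^-$'s, a $z^+$ or a $z^-$ between two $y^-$'s, or any of the remaining degree-$3$ patterns of Theorem~\ref{id}. Running the same parity analysis as in Proposition~\ref{prop:123} ($w_1=2k$ or $w_1=2k-1$ with $k=\lceil\frac{w_1}{2}\rceil$; $\gamma_1\ge k-1$ or $\gamma_1<k-1$, using $M_1+M_3$ in the latter) and evaluating at the $M_i,N_j,P_j$, each resulting $f_{T_{\langle\lambda\rangle}}$ is a nonzero matrix unit, whence $m_{\langle\lambda\rangle}\ne 0$.

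The main difficulty is organizational rather than conceptual: $\lambda(3)$ and $\lambda(4)$ both consume odd variables, so the pairing of $z^+$'s with $z^-$'s must be coordinated with the breaking of the leftover $z^+$'s by the $y^-$-chunks, and at the same time every monomial of the chosen highest weight vector has to dodge all of the degree-$3$ identities recorded in Theorem~\ref{id} ($(z^+)^3$, $(z^-)^3$, $y_1^-z^+y_2^-$, $y_1^-z^-y_2^-$, $\tilde{z}_1^+y^-\tilde{z}_2^+$, and so on). Once the multitableaux are fixed this way, verifying non-vanishing is just the explicit matrix computation indicated above, carried out over the finitely many parity cases; this is tedious but routine, entirely parallel to the verifications in Propositions~\ref{prop:123} and~\ref{prop:134}, with no genuinely new ingredient.
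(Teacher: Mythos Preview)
Your plan is correct and follows essentially the same route as the paper's proof: reduce to $w_1\ge\rho_1$ via Theorem~\ref{th:id}, prepend (or in one subcase append) the $\lambda(2)$-block to the odd-variable highest weight vectors built in Proposition~\ref{prop:134} when $|w_1-\rho_1|\le 2$, and interleave the $y^-$-chunks of Proposition~\ref{prop:123} with the $z^+/z^-$-pairing of Proposition~\ref{prop:134} when $|w_1-\rho_1|\ge 3$, then evaluate at the $M_i,N_j,P_j$ across the parity subcases. The paper's proof carries this out with a somewhat finer case split than you indicate (distinguishing $w_1\le 2$ from $w_1\ge 3$ inside the first case, introducing four auxiliary polynomials $g_1,\dots,g_4$ in the second, and using the substitution $z_1^-\mapsto P_1+P_2$ rather than $M_1\mapsto M_1+M_3$ in some subcases), but these are exactly the ``tedious but routine'' verifications you anticipate.
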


\begin{proof}
Consider the elements $M_1 = e_{23} + e_{32}$, $M_2 = e_{23} - e_{32}$, $M_3 = e_{22} - e_{33}$, $N_1 = e_{12} - e_{31}$, $N_2 = e_{13} + e_{21}$, $P_1 = e_{13} - e_{21}$, $P_2 = e_{12} + e_{31}$.
Because of Theorem \ref{th:id}, we may assume $w_1 \geq \rho_1$.
First suppose $w_1 - \rho_1 \leq 2$ and we distinguish two cases.
 \begin{itemize}
 
    \item $w_1 \leq 2$.
        \\If $\rho_1 \in \{ 0, 1 \}$, we consider the initial standard multitableau, then
            $$f_{T_{(\emptyset, \lambda(2), \lambda(3), \lambda(4))}}(y_1^-, y_2^-, y_3^-, z_1^+, z_2^+, z_1^-, z_2^-) = f_{\bar{T}_{\lambda(2)}}(y_1^-, y_2^-, y_3^-) f_{\bar{T}_{\lambda(3)}}(z_1^+, z_2^+) f_{\bar{T}_{\lambda(4)}} (z_1^-, z_2^-).$$
        So,
            $$f_{T_{(\emptyset, \lambda(2), \lambda(3), \lambda(4))}}(M_1, M_2, M_3, N_1, N_2, P_1, P_2) =$$ 
            $$\begin{cases} 
             \pm 6^{\gamma_3} 2^{\gamma_2} (\pm e_{22} \pm e_{33}) (e_{12} - e_{31})^{w_1} (2^{\rho_2} e_{11} \pm e_{22} \pm e_{33} ) (e_{13} - e_{21})^{\rho_1} \neq 0, & \text{if } \gamma_1 \text{ is even}, \\
             \pm 6^{\gamma_3} 2^{\gamma_2} (\pm e_{23} \pm e_{32}) (e_{12} - e_{31})^{w_1} (2^{\rho_2} e_{11} \pm e_{22} \pm e_{33} ) (e_{13} - e_{21})^{\rho_1} \neq 0, & \text{if } \gamma_1 \text{ is odd}.
            \end{cases}$$

        If $\rho_1 = 2$, we consider the following multitableau
            $$T_{(\emptyset, \lambda(2), \lambda(3), \lambda(4))} = (\emptyset, \bar{T}_{\lambda (2)}, \bar{T}_{\lambda (3)}, \bar{T}_{\lambda (4)}),$$
        where $\bar{T}_{\lambda (2)}$ is the initial standard tableau on the integers $2 w_2 + w_1 + 2 \rho_2 + \rho_1 + 1$, $\dots$, $n$, $\bar{T}_{\lambda (3)}$ is the initial standard tableau on the integers $1$, $\dots$, $2 w_2 + w_1$ and $\bar{T}_{\lambda (4)}$ is the initial standard tableau on the integers $2 w_2 + w_1 + 1$, $\dots$, $2 w_2 + w_1 + 2 \rho_2 + \rho_1$.
        Then 
            $$f_{T_{(\emptyset, \lambda(2), \lambda(3), \lambda(4))}}(y_1^-, y_2^-, y_3^-, z_1^+, z_2^+, z_1^-, z_2^-) =  f_{\bar{T}_{\lambda(3)}}(z_1^+, z_2^+) f_{\bar{T}_{\lambda(4)}} (z_1^-, z_2^-) f_{\bar{T}_{\lambda(2)}}(y_1^-, y_2^-, y_3^-)$$
        and so,
            $$f_{T_{(\emptyset, \lambda(2), \lambda(3), \lambda(4))}}(M_1, M_2, M_3, N_1, N_2, P_1, P_2) = 
            \begin{cases} 
              r e_{33} \neq 0, & \text{if } \gamma_1 \text{ is even}, \\
             r e_{32} \neq 0, & \text{if } \gamma_1 \text{ is odd},
            \end{cases}$$
        with $r = \pm 6^{\gamma_3} 2^{\gamma_2}$.

    \item $w_1 \geq 3$. 
    \\We consider the multitableau
        $$T_{(\emptyset, \lambda(2), \lambda(3), \lambda(4))} = (\emptyset, \bar{T}_{\lambda(2)}, T_{\lambda(3)}, T_{\lambda(4)}),$$
    where $\bar{T}_{\lambda(2)}$ is the initial standard tableau on the integers $1$, $\dots$, $3 \gamma_3 + 2 \gamma_2 + \gamma_1$ and $T_{\lambda(3)}$ and $T_{\lambda(4)}$ are the tableaux we considered in the proof of Proposition \ref{prop:134} on the integers $3 \gamma_3 + 2 \gamma_2 + \gamma_1 + 1$, $\dots$, $n$.
    Then 
        $$f_{T_{(\emptyset, \lambda(2), \lambda(3), \lambda(4))}} = f_{\bar{T}_{\lambda(2)}}(y_1^-, y_2^-, y_3^-) f_{T_{(\emptyset, \emptyset, \lambda(3), \lambda(4))}}(z_1^+, z_2^+, z_1^-, z_2^-),$$
    where $f_{T_{(\emptyset, \emptyset, \lambda(3), \lambda(4))}}(z_1^+, z_2^+, z_1^-, z_2^-)$ is the highest weight vector obtained in Proposition \ref{prop:134}.
    \\So, if $\rho_1 = w_1 - 2$,
        $$f_{T_{(\emptyset, \lambda(2), \lambda(3), \lambda(4))}}(M_1, M_2, M_3, N_1, N_2, P_1, P_2) = 
        \begin{cases} 
         r e_{32} \neq 0, & \text{if } \gamma_1 \text{ is even}, \\
         r e_{22} \neq 0, & \text{if } \gamma_1 \text{ is odd};
        \end{cases}$$
    if $\rho_1 = w_1 - 1$,
        $$f_{T_{(\emptyset, \lambda(2), \lambda(3), \lambda(4))}}(M_1, M_2, M_3, N_1, N_2, P_1, P_2) = 
        \begin{cases} 
         r e_{31} \neq 0, & \text{if } \gamma_1 \text{ is even}, \\
         r e_{21} \neq 0, & \text{if } \gamma_1 \text{ is odd};
        \end{cases}$$
    if $\rho_1 = w_1$,
        $$f_{T_{(\emptyset, \lambda(2), \lambda(3), \lambda(4))}}(M_1, M_2, M_3, N_1, N_2, P_1, P_2) = 
        \begin{cases} 
         r e_{33} \neq 0, & \text{if } \gamma_1 \text{ is even}, \\
         r e_{23} \neq 0, & \text{if } \gamma_1 \text{ is odd},
        \end{cases}$$
    with $r = \pm 6^{\gamma_3} 2^{\gamma_2}$.
   
    \end{itemize}

Now, consider the case $w_1 - \rho_1 \geq 3$, so $l = w_1$ and $\gamma_1 + \gamma_2 \geq k - 1$, with $\lceil \frac{l}{2} \rceil = k$. We consider the Young diagrams $D_{\lambda(2)} = D_\mu | D_\nu$ and $D_{\lambda(3)} = D_\beta | D_\delta$ as in the proof of Proposition \ref{prop:123} and $D_{\lambda(4)} = D_\epsilon | D_\sigma$, such that $D_\epsilon$ is the subdiagram of $D_{\lambda(4)}$ made up of the first $\rho_2$ columns and $D_\sigma$ is the diagram obtained considering the last $\rho_1$ columns.
We define the integer $a$ as
           $$a := 
           \begin{cases} 
            \rho_1, & \text{if } \rho_1 \text{ is even}, \\
            \rho_1 - 1, & \text{if } \rho_1 \text{ is odd}.
           \end{cases}$$
We distinguish these different cases.
 \begin{itemize}
       
    \item $w_1 = 2k$.
    \\We consider the multitableau $T_{\langle \lambda \rangle}$
       such that: 
       \\if $\gamma_1 \geq k - 1$,
            $$f_{T_{(\emptyset, \lambda(2), \lambda(3), \lambda(4))}}(y_1^-, y_2^-, y_3^-, z_1^+, z_2^+, z_1^-, z_2^-) = g_1(y_1^-, y_2^-, y_3^-, z_1^+, z_2^+, z_1^-, z_2^-) (y_1^- (z_1^+)^2)^{k - 1 - \frac{a}{2}} (z_1^-)^{\rho_1 - a},$$ 
    where
           $$g_1(y_1^-, y_2^-, y_3^-, z_1^+, z_2^+, z_1^-, z_2^-) = f_{\bar{T}_{\mu}}(y_1^-, y_2^-, y_3^-) f_{\bar{T}_{\beta}}(z_1^+, z_2^+) f_{\bar{T}_{\epsilon}} (z_1^-, z_2^-) (y_1^- (z_1^+)^2 (z_1^-)^2)^{\frac{a}{2}};$$
      if $0 < \gamma_1 < k - 1$ and $\frac{a}{2} \leq k - 1 - \gamma_1$,
            $$f_{T_{(\lambda(1), \lambda(2), \lambda(3), \lambda(4))}}(y_1^-, y_2^-, y_3^-, z_1^+, z_2^+, z_1^-, z_2^-) = g_2(y_1^-, y_2^-, y_3^-, z_1^+, z_2^+, z_1^-, z_2^-) (y_1^- (z_1^+)^2)^{\gamma_1} (z_1^-)^{\rho_1 - a},$$
    where
           $$g_2(y_1^-, y_2^-, y_3^-, z_1^+, z_2^+, z_1^-, z_2^-) =$$
           $$f_{\bar{T}_{\mu}}(y_1^-, y_2^-, y_3^-) f_{\bar{T}_{\beta}}(z_1^+, z_2^+) f_{\bar{T}_{\epsilon}} (z_1^-, z_2^-) ([y_1^-, y_2^-] (z_1^+)^2 (z_1^-)^2)^{\frac{a}{2}} ([y_1^-, y_2^-] (z_1^+)^2)^{k - 1 - \gamma_1 - \frac{a}{2}};$$
      if $0 < \gamma_1 < k - 1$ and $\frac{a}{2} > k - 1 - \gamma_1$,
            $$f_{T_{(\emptyset, \lambda(2), \lambda(3), \lambda(4))}}(y_1^-, y_2^-, y_3^-, z_1^+, z_2^+, z_1^-, z_2^-) = g_3(y_1^-, y_2^-, y_3^-, z_1^+, z_2^+, z_1^-, z_2^-) (y_1^- (z_1^+)^2)^{k - 1 - \frac{a}{2}} (z_1^-)^{\rho_1 - a},$$
    where
           $$g_3(y_1^-, y_2^-, y_3^-, z_1^+, z_2^+, z_1^-, z_2^-) =$$
           $$f_{\bar{T}_{\mu}}(y_1^-, y_2^-, y_3^-) f_{\bar{T}_{\beta}}(z_1^+, z_2^+) f_{\bar{T}_{\epsilon}} (z_1^-, z_2^-) ([y_1^-, y_2^-] (z_1^+)^2 (z_1^-)^2)^{k - 1 - \gamma_1} (y_1^- (z_1^+)^2 (z_1^-)^2)^{\frac{a}{2} - k + 1 + \gamma_1};$$
      if $\gamma_1 = 0$,
            $$f_{T_{(\emptyset, \lambda(2), \lambda(3), \lambda(4))}}(y_1^-, y_2^-, y_3^-, z_1^+, z_2^+, z_1^-, z_2^-) = g_4(y_1^-, y_2^-, y_3^-, z_1^+, z_2^+, z_1^-, z_2^-) ([y_1^-, y_2^-] (z_1^+)^2)^{k - 1 - \frac{a}{2}} (z_1^-)^{\rho_1 - a},$$
    where
           $$g_4(y_1^-, y_2^-, y_3^-, z_1^+, z_2^+, z_1^-, z_2^-) = f_{\bar{T}_{\mu}}(y_1^-, y_2^-, y_3^-) f_{\bar{T}_{\beta}}(z_1^+, z_2^+) f_{\bar{T}_{\epsilon}} (z_1^-, z_2^-) ([y_1^-, y_2^-] (z_1^+)^2 (z_1^-)^2)^{\frac{a}{2}}.$$
      
       So, if $\gamma_1 \geq k - 1$,
            $$f_{T_{(\emptyset, \lambda(2), \lambda(3), \lambda(4))}}(M_1, M_2, M_3, N_1, N_2, P_1 + P_2, P_2) = 
            \begin{cases} 
             r e_{32} (- e_{21})^{\rho_1 - a} \neq 0, & \text{if } \gamma_1 - k + 1 \text{ is even}, \\
             r e_{22} (- e_{21})^{\rho_1 - a} \neq 0, & \text{if } \gamma_1 - k + 1 \text{ is odd},
            \end{cases}$$
       where $r = \pm 6^{\gamma_3} 2^{\gamma_2}$.
       \\And, if $0 \leq \gamma_1 < k - 1$,
           $$f_{T_{(\emptyset, \lambda(2), \lambda(3), \lambda(4))}}(M_1 + M_3, M_2, M_3, N_1, N_2, P_1, P_2) =
           \begin{cases} 
            \pm 6^{\gamma_3} 2^{\frac{1}{2} (s + 1)} e_{32} (- e_{21})^{\rho_1 - a} \neq 0, & \text{if } t_1 + t_2 \text{ is even}, \\
            \pm 6^{\gamma_3} 2^{\frac{1}{2} s} (e_{22} + e_{32}) (- e_{21})^{\rho_1 - a} \neq 0, & \text{if } t_1 + t_2 \text{ is odd},
           \end{cases}$$
       where $s = 3 \gamma_2 - k + \gamma_1$.

       \item $w_1 = 2k - 1$.
       \\We consider the multitableau $T_{(\emptyset, \lambda(2), \lambda(3), \lambda(4))}$ such that: 
       \\if $\gamma_1 \geq k - 1$,
            $$f_{T_{(\emptyset, \lambda(2), \lambda(3), \lambda(4))}}(y_1^-, y_2^-, y_3^-, z_1^+, z_2^+, z_1^-, z_2^-) =
            g_1(y_1^-, y_2^-, y_3^-, z_1^+, z_2^+, z_1^-, z_2^-) (y_1^- (z_1^+)^2)^{k - 2 - \frac{a}{2}} y_1^- z_1^+ (z_1^-)^{\rho_1 - a};$$
      if $0 < \gamma_1 < k - 1$ and $\frac{a}{2} \leq k - 1 - \gamma_1$,
            $$f_{T_{(\emptyset, \lambda(2), \lambda(3), \lambda(4))}}(y_1^-, y_2^-, y_3^-, z_1^+, z_2^+, z_1^-, z_2^-) =
            g_2(y_1^-, y_2^-, y_3^-, z_1^+, z_2^+, z_1^-, z_2^-)(y_1^- (z_1^+)^2)^{\gamma_1 - 1} y_1^- z_1^+ (z_1^-)^{\rho_1 - a};$$
      if $0 < \gamma_1 < k - 1$ and $\frac{a}{2} > k - 1 - \gamma_1$,
            $$f_{T_{(\emptyset, \lambda(2), \lambda(3), \lambda(4))}}(y_1^-, y_2^-, y_3^-, z_1^+, z_2^+, z_1^-, z_2^-) =
            g_3(y_1^-, y_2^-, y_3^-, z_1^+, z_2^+, z_1^-, z_2^-)(y_1^- (z_1^+)^2)^{k - 2 - \frac{a}{2}} y_1^- z_1^+ (z_1^-)^{\rho_1 - a};$$
      if $\gamma_1 = 0$
            $$f_{T_{(\emptyset, \lambda(2), \lambda(3), \lambda(4))}}(y_1^-, y_2^-, y_3^-, z_1^+, z_2^+, z_1^-, z_2^-) =$$
            $$g_4(y_1^-, y_2^-, y_3^-, z_1^+, z_2^+, z_1^-, z_2^-)([y_1^-, y_2^-] (z_1^+)^2)^{k - 2 - \frac{a}{2}} [y_1^-, y_2^-] z_1^+ (z_1^-)^{\rho_1 - a}.$$
      
       Then, if $\gamma_1 \geq k - 1$,
           $$f_{T_{(\emptyset, \lambda(2), \lambda(3), \lambda(4))}}(M_1, M_2, M_3, N_1, N_2, P_1 + P_2, P_2) = 
           \begin{cases} 
            r e_{31} (e_{13} + e_{12})^{\rho_1 - a} \neq 0, & \text{if } t_1 + t_2 \text{ is even}, \\
            r e_{21} (e_{13} + e_{12})^{\rho_1 - a} \neq 0, & \text{if } t_1 + t_2 \text{ is odd},      
           \end{cases}$$
       where $r = \pm 6^{\gamma_3} 2^{\gamma_2}$.
       \\And, if $0 \leq \gamma_1 < k - 1$,
           $$f_{T_{(\emptyset, \lambda(2), \lambda(3), \lambda(4))}}(M_1 + M_3, M_2, M_3, N_1, N_2, P_1, P_2) =
           \begin{cases} 
            \pm 6^{\gamma_3} 2^{\frac{1}{2} (s + 1)} e_{31} (e_{13})^{\rho_1 - a}  \neq 0, & \text{if } t_1 + t_2 \text{ is even}, \\
            \pm 6^{\gamma_3} 2^{\frac{1}{2} s} (e_{21} + e_{31}) (e_{13})^{\rho_1 - a} \neq 0, & \text{if } t_1 + t_2 \text{ is odd},
           \end{cases}$$
       where $s = 3 \gamma_2 - k + \gamma_1$.

    \end{itemize}
\end{proof}

Now we are ready to prove the main theorem of this section.

\begin{theorem}\label{th:final}
Consider 
 $$\chi_n^{*}(M_{1,2}(F)) = \sum_{\substack{\langle \lambda \rangle \vdash n, \\ h(\lambda(1)) \leq 2, \ h(\lambda(2)) \leq 3, \\ h(\lambda(3)) \leq 2, \ h(\lambda(4)) \leq 2}} m_{\langle \lambda \rangle} \chi_ {\langle \lambda \rangle}$$ the $n$-th $*$-cocharacter of $M_{1, 2}(F)$.
Let $\langle \lambda \rangle = (\lambda(1), \lambda(2), \lambda(3), \lambda(4))$, with $\lambda(2) = (\gamma_1 + \gamma_2 + \gamma_3, \gamma_2 + \gamma_3, \gamma_3)$, $\lambda(3) = (w_1 + w_2, w_2)$, $\lambda(4) = (\rho_1 + \rho_2, \rho_2)$ and $l = max \{w_1, \rho_1 \}$. The following statements hold.

If $\lambda(3) = \lambda(4) = \emptyset$, then $m_{\langle \lambda \rangle} \neq 0$ if and only if $h(\lambda(1)) \leq 1$.

If $m_{\langle \lambda \rangle} \neq 0$, $\lambda(2) = \emptyset$, $\lambda(i) = \emptyset$ and $\lambda(j) \neq \emptyset$, with $i, j \in \{ 3, 4 \}$, $i \neq j$, then $|w_1 - \rho_1| \leq 2$.

If $\lambda(j) \neq \emptyset$ for some $j \in \{ 3, 4 \}$ and
 \begin{enumerate}
     \item[$\cdot$] $|w_1 - \rho_1| \leq 2$ or 
     \item[$\cdot$] $|w_1 - \rho_1| \geq 3$ and $\lambda(2) \neq \emptyset$, with $\gamma_1 + \gamma_2 \geq \lceil \frac{l}{2} \rceil - 1$
 \end{enumerate}

then $m_{\langle \lambda \rangle} \neq 0$.
\end{theorem}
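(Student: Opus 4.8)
The plan is to read Theorem \ref{th:final} as a repackaging of Propositions \ref{prop:12}, \ref{prop:13}, \ref{prop:14}, \ref{prop:134}, \ref{prop:123}, \ref{prop:124} and \ref{prop:234}, organised by which of the four partitions $\lambda(1),\lambda(2),\lambda(3),\lambda(4)$ are empty, with a single leftover configuration that needs a short extra argument. For the first statement I would simply quote Proposition \ref{prop:12}, since $\lambda(3)=\lambda(4)=\emptyset$ reduces $\langle\lambda\rangle$ to $(\lambda(1),\lambda(2),\emptyset,\emptyset)$. For the second statement, $\lambda(2)=\emptyset$ together with exactly one of $\lambda(3),\lambda(4)$ nonempty places us in the hypotheses of Proposition \ref{prop:13} (when $\lambda(4)=\emptyset$, so $\rho_1=0$ and $|w_1-\rho_1|=w_1$) or of Proposition \ref{prop:14} (when $\lambda(3)=\emptyset$, so $w_1=0$), and the ``only if'' direction of those statements gives $|w_1-\rho_1|\le 2$ whenever $m_{\langle\lambda\rangle}\neq 0$.

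For the third statement I would go through the cases of $(\lambda(1),\lambda(2))$ under the standing hypothesis that $\lambda(j)\neq\emptyset$ for some $j\in\{3,4\}$. If $\lambda(2)=\emptyset$, the only admissible hypothesis is $|w_1-\rho_1|\le 2$, which is Proposition \ref{prop:13}/\ref{prop:14} when exactly one of $\lambda(3),\lambda(4)$ vanishes and Proposition \ref{prop:134} when both are nonempty (any $\lambda(1)$). If $\lambda(2)\neq\emptyset$ and exactly one of $\lambda(3),\lambda(4)$ is nonempty, then $l=w_1$ or $l=\rho_1$ and the claim is Proposition \ref{prop:123} or Proposition \ref{prop:124}, both of which already allow $\lambda(1)\neq\emptyset$. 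If $\lambda(2)\neq\emptyset$, both $\lambda(3),\lambda(4)$ are nonempty and $\lambda(1)=\emptyset$, it is Proposition \ref{prop:234}. The only configuration not literally covered is $\lambda(1),\lambda(2),\lambda(3),\lambda(4)$ all nonempty, and that is where the real work sits.

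To dispose of that last case I would take the highest weight vector $g=f_{T_{(\emptyset,\lambda(2),\lambda(3),\lambda(4))}}$ built in the proof of Proposition \ref{prop:234} — which, under the substitutions of $M_1,M_2,M_3,N_1,N_2,P_1,P_2$ used there, evaluates to a nonzero matrix — and wrap it with the even-symmetric part of $\lambda(1)$ exactly as in the proof of Proposition \ref{prop:123}: I would choose a multitableau $T_{\langle\lambda\rangle}$ of shape $\langle\lambda\rangle$ whose highest weight vector is $\underbrace{\tilde y_1^+\cdots\tilde{\tilde y}_1^+}_{\alpha_2}(y_1^+)^{\alpha_1}$ followed by $g$, with the partner halves $\underbrace{\tilde y_2^+\cdots\tilde{\tilde y}_2^+}_{\alpha_2}$ of the height-two columns of $\lambda(1)$ inserted just before the last variable of $g$. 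Substituting $y_1^+=R_2=e_{22}+e_{33}$, $y_2^+=R_1=e_{11}$ (orthogonal diagonal idempotents with $R_1+R_2=I$) and keeping the remaining substitutions, $R_1$ and $R_2$ act on every matrix unit occurring in the evaluation as $0$ or the identity from either side, so each height-two alternation $[y_1^+,y_2^+]$ — whose two factors straddle $g$, hence are never adjacent — contributes only a nonzero scalar, and the $(y_1^+)^{\alpha_1}$ block acts as a scalar too; thus $f_{T_{\langle\lambda\rangle}}$ still evaluates to a nonzero matrix, so $f_{T_{\langle\lambda\rangle}}\notin Id_2^*(M_{1,2}(F))$, and Theorems \ref{th:cocharacter} and \ref{th:hwv} yield $m_{\langle\lambda\rangle}\neq 0$.

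The step I expect to be the main obstacle is precisely this bookkeeping in the all-nonempty case: the two halves of every height-two column of $\lambda(1)$ must be placed on opposite sides of the core word $g$, because the ``adjacent'' alternation $[R_2,R_1]=[e_{22}+e_{33},e_{11}]$ vanishes and would annihilate the evaluation, so the multitableau has to be picked so that the first row of $\lambda(1)$ lands at the front of the word and the second row near the end — exactly the placement forced in the proofs of Propositions \ref{prop:123}, \ref{prop:124} and \ref{prop:234}. Once the placement is fixed, checking non-vanishing is the same routine matrix-unit computation already performed there, and I would not repeat it in detail.
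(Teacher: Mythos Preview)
Your reduction of the first two statements and of most of the third to Propositions \ref{prop:12}--\ref{prop:234} is exactly the paper's organisation, and the identification of the all-nonempty case $\lambda(1),\lambda(2),\lambda(3),\lambda(4)\neq\emptyset$ as the only genuinely new work is correct.

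The gap is in your treatment of that last case. Your key claim, that with $y_1^+\mapsto R_2=e_{22}+e_{33}$ and $y_2^+\mapsto R_1=e_{11}$ ``each height-two alternation $[y_1^+,y_2^+]$ \dots\ contributes only a nonzero scalar'', is false as stated: for a matrix unit $e_{ij}$ one has $R_2\,e_{ij}\,R_1 - R_1\,e_{ij}\,R_2=\pm e_{ij}$ only when $e_{ij}$ lies in the off-diagonal $(1\mid 2,3)$-blocks, and it is $0$ when $e_{ij}$ lies in the block-diagonal part (e.g.\ $e_{33}$, $e_{32}$, $e_{22}$). But several of the evaluations produced in Proposition \ref{prop:234} are precisely $e_{33}$, $e_{32}$, $e_{22}$, so a uniform rule ``insert the $\tilde y_2^+$-partners just before the last variable of $g$'' will in general annihilate the evaluation. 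What is actually required is that the subword sitting \emph{between} the $\tilde y_1^+$-block and the $\tilde y_2^+$-block evaluate to an odd (off-diagonal) element, and arranging this forces the insertion point to depend on the case.

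This is exactly why the paper does not use a single recipe. For $|w_1-\rho_1|\le 2$ it does \emph{not} wrap the polynomial of Proposition \ref{prop:234} by $\lambda(1)$; instead it takes the polynomial $f_{T_{(\lambda(1),\emptyset,\lambda(3),\lambda(4))}}$ from Proposition \ref{prop:134} (which already carries the $\lambda(1)$-alternations with a working insertion point) and pre- or post-multiplies by $f_{\bar T_{\lambda(2)}}$. For $|w_1-\rho_1|\ge 3$ it does start from the $g_i$ of Proposition \ref{prop:234}, but the $\tilde y_2^+$-block is inserted inside a split $y_1^- z_1^+\,[\cdots]\,z_1^+$ (or $[y_1^-,y_2^-]z_1^+\,[\cdots]\,z_1^+$), not ``just before the last variable'', precisely so that the left subword ends in $z_1^+$ and hence evaluates into the off-diagonal block. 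Your sketch has the right architecture, but the placement rule and the scalar claim must be replaced by this case analysis for the argument to go through.
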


\begin{proof}
The first two items follow from Propositions \ref{prop:12}, \ref{prop:13} and \ref{prop:14}.

Let $\lambda(j) \neq \emptyset$ for some $j \in \{ 3, 4 \}$. 
If $\lambda(2) = \emptyset$ and $|w_1 - \rho_1| \leq 2$, then $m_{\langle \lambda \rangle} \neq 0$ by Propositions \ref{prop:13}, \ref{prop:14} and \ref{prop:134}. 
We may assume, since the previous cases hold, that $\lambda(2) \neq \emptyset$. 
If $\lambda(i) = \emptyset$, for $i \in \{ 3, 4 \}, \ i \neq j$, $|w_1 - \rho_1| \leq 2$ or $|w_1 - \rho_1| \geq 3$ and $\gamma_1 + \gamma_2 \geq \lceil \frac{l}{2} \rceil - 1$, then we get the result by Propositions \ref{prop:123} and \ref{prop:124}.
Now, let $\lambda(i) \neq \emptyset$, for $i \in \{ 3, 4 \}, \ i \neq j$.
If $\lambda(1) = \emptyset$, $|w_1 - \rho_1| \leq 2$ or $|w_1 - \rho_1| \geq 3$ and $\gamma_1 + \gamma_2 \geq \lceil \frac{l}{2} \rceil - 1$, then $m_{\langle \lambda \rangle} \neq 0$ by Proposition \ref{prop:234}.

We are left to prove the case $\lambda(i) \neq \emptyset$ for all $i \in \{ 1, 2, 3, 4 \}$, $|w_1 - \rho_1| \leq 2$ or $|w_1 - \rho_1| \geq 3$ and $\gamma_1 + \gamma_2 \geq \lceil \frac{l}{2} \rceil - 1$.
Consider the elements $R_1 = e_{11}$, $R_2 = e_{22} + e_{33}$, $M_1 = e_{23} + e_{32}$, $M_2 = e_{23} - e_{32}$, $M_3 = e_{22} - e_{33}$, $N_1 = e_{12} - e_{31}$, $N_2 = e_{13} + e_{21}$, $P_1 = e_{13} - e_{21}$, $P_2 = e_{12} + e_{31}$.
 
Because of Theorem \ref{th:id}, we may suppose $w_1 \geq \rho_1$.

If $w_1 - \rho_1 \leq 2$, we distinguish two cases:
 \begin{itemize}

    \item $w_1 \leq 2$.
         \\If $\rho_1 \in \{ 0, 1 \}$, we consider the multitableau
         \begin{center}
            $T_{(\lambda(1), \lambda(2), \lambda(3), \lambda(4))} = (T_{\lambda(1)}, \bar{T}_{\lambda(2)}, T_{\lambda(3)}, T_{\lambda(4)})$,
         \end{center}
        where $\bar{T}_{\lambda(2)}$ is the initial standard tableau on the integers $1$, $\dots$, $3 \gamma_3 + 2 \gamma_2 + \gamma_1$ and $T_{\lambda(1)}$, $T_{\lambda(3)}$ and $T_{\lambda(4)}$ are the tableaux we considered in the proof of Proposition \ref{prop:134}. 
        Then
            $$f_{T_{(\lambda(1), \lambda(2), \lambda(3), \lambda(4))}}(y_1^+, y_2^+, y_1^-, y_2^-, y_3^-, z_1^+, z_2^+, z_1^-, z_2^-) =$$ 
            $$f_{\bar{T}_{\lambda(2)}}(y_1^-, y_2^-, y_3^-) f_{T_{(\lambda(1), \emptyset, \lambda(3), \lambda(4))}}(y_1^+, y_2^+, z_1^+, z_2^+, z_1^-, z_2^-),$$
        where $f_{T_{(\lambda(1), \emptyset, \lambda(3), \lambda(4))}}(y_1^+, y_2^+, z_1^+, z_2^+, z_1^-, z_2^-)$ is the highest weight vector obtained in Proposition \ref{prop:134}.
        So,
            $$f_{T_{(\lambda(1), \lambda(2), \lambda(3), \lambda(4))}}(R_2, R_1, M_1, M_2, M_3, N_1, N_2, P_1, P_2) =$$ 
            $$\begin{cases} 
             \pm 6^{\gamma_3} 2^{\gamma_2} (\pm e_{22} \pm e_{33}) (e_{12} - e_{31})^{w_1} (2^{\rho_2} e_{11} \pm e_{22} \pm e_{33}) (e_{13} - e_{21})^{\rho_1}) \neq 0, & \text{if } \gamma_1 \text{ is even}, \\
             \pm 6^{\gamma_3} 2^{\gamma_2} (\pm e_{23} \pm e_{32}) (e_{12} - e_{31})^{w_1} (2^{\rho_2} e_{11} \pm e_{22} \pm e_{33} ) (e_{13} - e_{21})^{\rho_1}) \neq 0, & \text{if } \gamma_1 \text{ is odd}.
            \end{cases}$$

        If $\rho_1 = 2$, we consider the following multitableau
            $$T_{(\lambda(1), \lambda(2), \lambda(3), \lambda(4))} = (T_{\lambda(1)}, \bar{T}_{\lambda (2)}, T_{\lambda (3)}, T_{\lambda (4)}),$$
        where $\bar{T}_{\lambda(2)}$ is the initial standard tableau on the integers $n - 3 \gamma_3 - 2 \gamma_2 - \gamma_1 + 1$, $\dots$, $n$ and $T_{\lambda(1)}$, $T_{\lambda(3)}$ and $T_{\lambda(4)}$ are the tableaux we considered in the proof of Proposition \ref{prop:134} on the integers $1$, $\dots$, $n - 3 \gamma_3 - 2 \gamma_2 - \gamma_1$.
        \\Then 
            $$f_{T_{(\lambda(1), \lambda(2), \lambda(3), \lambda(4))}}(y_1^+, y_2^+, y_1^-, y_2^-, y_3^-, z_1^+, z_2^+, z_1^-, z_2^-) =$$
            $$f_{T_{(\lambda(1), \emptyset, \lambda(3), \lambda(4))}}(y_1^+, y_2^+, z_1^+, z_2^+, z_1^-, z_2^-) f_{\bar{T}_{\lambda(2)}}(y_1^-, y_2^-, y_3^-),$$
        where $f_{T_{(\lambda(1), \emptyset, \lambda(3), \lambda(4))}}(y_1^+, y_2^+, z_1^+, z_2^+, z_1^-, z_2^-)$ is the highest weight vector obtained in Proposition \ref{prop:134}.
        \\Then,
            $$f_{T_{(\lambda(1), \lambda(2), \lambda(3), \lambda(4))}}(R_2, R_1, M_1, M_2, M_3, N_1, N_2, P_1, P_2) = 
            \begin{cases} 
             r e_{33} \neq 0, & \text{if } \gamma_1 \text{ is even}, \\
             r e_{32} \neq 0, & \text{if } \gamma_1 \text{ is odd},
            \end{cases}$$
        with $r = \pm 6^{\gamma_3} 2^{\gamma_2}$.

    \item $w_1 \geq 3$. 
    \\We consider the multitableau
        $$T_{(\lambda(1), \lambda(2), \lambda(3), \lambda(4))} = (T_{\lambda(1)}, \bar{T}_{\lambda(2)}, T_{\lambda(3)}, T_{\lambda(4)}),$$
    where $\bar{T}_{\lambda(2)}$ is the initial standard tableau on the integers $1$, $\dots$, $3 \gamma_3 + 2 \gamma_2 + \gamma_1$ and $T_{\lambda(1)}$, $T_{\lambda(3)}$ and $T_{\lambda(4)}$ are the tableaux we considered in the proof of Proposition \ref{prop:134} on the integers $3 \gamma_3 + 2 \gamma_2 + \gamma_1 + 1$, $\dots$, $n$.
    \\Then 
        $$f_{T_{(\lambda(1), \lambda(2), \lambda(3), \lambda(4))}} = f_{\bar{T}_{\lambda(2)}}(y_1^-, y_2^-, y_3^-) f_{T_{(\lambda(1), \emptyset, \lambda(3), \lambda(4))}}(y_1^+, y_2^+, z_1^+, z_2^+, z_1^-, z_2^-),$$
    where $f_{T_{(\lambda(1), \emptyset, \lambda(3), \lambda(4))}}(y_1^+, y_2^+, z_1^+, z_2^+, z_1^-, z_2^-)$ is the highest weight vector obtained in Proposition \ref{prop:134}.
    \\If $\rho_1 = w_1 - 2$,
        $$f_{T_{(\lambda(1), \lambda(2), \lambda(3), \lambda(4))}}(R_2, R_1, M_1, M_2, M_3, N_1, N_2, P_1, P_2) = 
        \begin{cases} 
         r e_{32} \neq 0, & \text{if } \gamma_1 \text{ is even}, \\
         r e_{22} \neq 0, & \text{if } \gamma_1 \text{ is odd};
        \end{cases}$$
    if $\rho_1 = w_1 - 1$,
        $$f_{T_{(\lambda(1), \lambda(2), \lambda(3), \lambda(4))}}(M_1, M_2, M_3, N_1, N_2, P_1, P_2) = 
        \begin{cases} 
         r e_{31} \neq 0, & \text{if } \gamma_1 \text{ is even}, \\
         r e_{21} \neq 0, & \text{if } \gamma_1 \text{ is odd};
        \end{cases}$$
    if $\rho_1 = w_1$,
        $$f_{T_{(\lambda(1), \lambda(2), \lambda(3), \lambda(4))}}(M_1, M_2, M_3, N_1, N_2, P_1, P_2) = 
        \begin{cases} 
         r e_{33} \neq 0, & \text{if } \gamma_1 \text{ is even}, \\
         r e_{23} \neq 0, & \text{if } \gamma_1 \text{ is odd},
        \end{cases}$$
    with $r = \pm 6^{\gamma_3} 2^{\gamma_2}$.
    
 \end{itemize} 

Now, if $w_1 - \rho_1 \geq 3$ and $\gamma_1 + \gamma_2 \geq k - 1$, where $k = \lceil \frac{w_1}{2} \rceil$, we define the integer $a$ as in Proposition \ref{prop:234},
 \begin{center}
           $a := 
           \begin{cases} 
            \rho_1, & \text{if } \rho_1 \text{ is even}, \\
            \rho_1 - 1, & \text{if } \rho_1 \text{ is odd}
           \end{cases}$
        \end{center}
and we consider the $*$-polynomials $g_i (y_1^-, y_2^-, y_3^-, z_1^+, z_2^+, z_1^-, z_2^-)$ constructed in Proposition \ref{prop:234}.
\\We distinguish these different cases.
 \begin{itemize}
       
    \item $w_1 = 2k$.
    \\We consider the multitableau $T_{(\lambda(1), \lambda(2), \lambda(3), \lambda(4))}$
       such that: 
       \\if $\gamma_1 \geq k - 1$,
            $$f_{T_{(\lambda(1), \lambda(2), \lambda(3), \lambda(4))}}(y_1^+, y_2^+, y_1^-, y_2^-, y_3^-, z_1^+, z_2^+, z_1^-, z_2^-) =$$
            $$\underbrace{\tilde{y}_1^+ \dots \tilde{\tilde{y}}_1^+}_{\alpha_2} (y_1^+)^{\alpha_1} g_1(y_1^-, y_2^-, y_3^-, z_1^+, z_2^+, z_1^-, z_2^-) (y_1^- (z_1^+)^2)^{k - 2 - \frac{a}{2}} y_1^- z_1^+ \underbrace{\tilde{y}_2^+ \dots \tilde{\tilde{y}}_2^+}_{\alpha_2} z_1^+ (z_1^-)^{\rho_1 - a};$$

      if $0 < \gamma_1 < k - 1$ and $\frac{a}{2} \leq k - 1 - \gamma_1$,
            $$f_{T_{(\lambda(1), \lambda(2), \lambda(3), \lambda(4))}}(y_1^+, y_2^+, y_1^-, y_2^-, y_3^-, z_1^+, z_2^+, z_1^-, z_2^-) =$$
            $$\underbrace{\tilde{y}_1^+ \dots \tilde{\tilde{y}}_1^+}_{\alpha_2} (y_1^+)^{\alpha_1} g_2(y_1^-, y_2^-, y_3^-, z_1^+, z_2^+, z_1^-, z_2^-) (y_1^- (z_1^+)^2)^{\gamma_1 - 1} y_1^- z_1^+ \underbrace{\tilde{y}_2^+ \dots \tilde{\tilde{y}}_2^+}_{\alpha_2} z_1^+ (z_1^-)^{\rho_1 - a};$$
      if $0 < \gamma_1 < k - 1$ and $\frac{a}{2} > k - 1 - \gamma_1$,
            $$f_{T_{(\lambda(1), \lambda(2), \lambda(3), \lambda(4))}}(y_1^+, y_1^+, y_1^-, y_2^-, y_3^-, z_1^+, z_2^+, z_1^-, z_2^-) =$$
            $$\underbrace{\tilde{y}_1^+ \dots \tilde{\tilde{y}}_1^+}_{\alpha_2} (y_1^+)^{\alpha_1} g_3(y_1^-, y_2^-, y_3^-, z_1^+, z_2^+, z_1^-, z_2^-) (y_1^- (z_1^+)^2)^{k - 2 - \frac{a}{2}} y_1^- z_1^+ \underbrace{\tilde{y}_2^+ \dots \tilde{\tilde{y}}_2^+}_{\alpha_2} z_1^+ (z_1^-)^{\rho_1 - a}$$
      and if $\gamma_1 = 0$
            $$f_{T_{(\lambda(1), \lambda(2), \lambda(3), \lambda(4))}}(y_1^+, y_2^+, y_1^-, y_2^-, y_3^-, z_1^+, z_2^+, z_1^-, z_2^-) =$$
            $$\underbrace{\tilde{y}_1^+ \dots \tilde{\tilde{y}}_1^+}_{\alpha_2} (y_1^+)^{\alpha_1} g_4(y_1^-, y_2^-, y_3^-, z_1^+, z_2^+, z_1^-, z_2^-) ([y_1^-, y_2^-] (z_1^+)^2)^{k - 2 - \frac{a}{2}} [y_1^-, y_2^-] z_1^+ \underbrace{\tilde{y}_2^+ \dots \tilde{\tilde{y}}_2^+}_{\alpha_2} z_1^+ (z_1^-)^{\rho_1 - a}.$$

       So, if $\gamma_1 \geq k - 1$,
            $$f_{T_{(\lambda(1), \lambda(2), \lambda(3), \lambda(4))}}(R_2, R_1, M_1, M_2, M_3, N_1, N_2, P_1 + P_2, P_2) = \begin{cases} 
             r e_{32} (- e_{21})^{\rho_1 - a} \neq 0, & \text{if } \gamma_1 - k + 1 \text{ is even}, \\
             r e_{22} (- e_{21})^{\rho_1 - a} \neq 0, & \text{if } \gamma_1 - k + 1 \text{ is odd},
            \end{cases}$$
       where $r = \pm 6^{\gamma_3} 2^{\gamma_2}$
       \\and, if $0 \leq \gamma_1 < k - 1$,
           $$f_{T_{(\lambda(1), \lambda(2), \lambda(3), \lambda(4))}}(R_2, R_1, M_1 + M_3, M_2, M_3, N_1, N_2, P_1, P_2) = \begin{cases} 
            \pm 6^{\gamma_3} 2^{\frac{1}{2} (s + 1)} e_{32} (- e_{21})^{\rho_1 - a} \neq 0, & \text{if } t_1 + t_2 \text{ is even}, \\
            \pm 6^{\gamma_3} 2^{\frac{1}{2} s} (e_{22} + e_{32}) (- e_{21})^{\rho_1 - a} \neq 0, & \text{if } t_1 + t_2 \text{ is odd},
           \end{cases}$$
       where $s = 3 \gamma_2 - k + \gamma_1$.

       \item $w_1 = 2k - 1$. 
       \\We consider the multitableau $T_{(\lambda(1), \lambda(2), \lambda(3), \lambda(4))}$ such that
            $$f_{T_{(\lambda(1), \lambda(2), \lambda(3), \lambda(4))}}(y_1^+, y_2^+, y_1^-, y_2^-, y_3^-, z_1^+, z_2^+, z_1^-, z_2^-) =$$ $$\underbrace{\tilde{y}_1^+ \dots \tilde{\tilde{y}}_1^+}_{\alpha_2} (y_1^+)^{\alpha_1} f^\eta (y_1^+, y_2^+, y_1^-, y_2^-, y_3^-, z_1^+, z_2^+, z_1^-, z_2^-) \underbrace{\tilde{y}_2^+ \dots \tilde{\tilde{y}}_2^+}_{\alpha_2} (z_1^-)^{\rho_1 - a},$$
       where $f^\eta (y_1^+, y_2^+, y_1^-, y_2^-, y_3^-, z_1^+, z_2^+, z_1^-, z_2^-)$ is such that 
            $$f_{T_{(\emptyset, \lambda(2), \lambda(3), \lambda(4))}} = f^\eta (y_1^+, y_2^+, y_1^-, y_2^-, y_3^-, z_1^+, z_2^+, z_1^-, z_2^-) (z_1^-)^{\rho_1 - a}$$
       and $f_{T_{(\emptyset, \lambda(2), \lambda(3), \lambda(4))}}$ is the highest weight vector constructed in Proposition \ref{prop:234}.
       \\Then, if $\gamma_1 \geq k - 1$,
           $$f_{T_{(\lambda(1), \lambda(2), \lambda(3), \lambda(4))}}(R_2, R_1, M_1, M_2, M_3, N_1, N_2, P_1 + P_2, P_2) = \begin{cases} 
            \pm r e_{31} (e_{13} + e_{12})^{\rho_1 - a} \neq 0, & \text{if } \gamma_1 - k + 1 \text{ is even}, \\
            \pm r e_{21} (e_{13} + e_{12})^{\rho_1 - a} \neq 0, & \text{if } \gamma_1 - k + 1 \text{ is odd},      
           \end{cases}$$
       where $r = 6^{\gamma_3} 2^{\gamma_2}$.
       \\If $0 \leq \gamma_1 < k - 1$,
           $$f_{T_{\langle \lambda \rangle}}(R_2, R_1, M_1 + M_3, M_2, M_3, N_1, N_2, P_1, P_2) = \begin{cases} 
            \pm 6^{\gamma_3} 2^{\frac{1}{2} (s + 1)} e_{31} (e_{13})^{\rho_1 - a}  \neq 0, & \text{if } t_1 + t_2 \text{ is even}, \\
            \pm 6^{\gamma_3} 2^{\frac{1}{2} s} (e_{21} + e_{31}) (e_{13})^{\rho_1 - a} \neq 0, & \text{if } t_1 + t_2 \text{ is odd},
           \end{cases}$$
       where $s = 3 \gamma_2 - k + \gamma_1$.

    \end{itemize}

So, the proof is complete.
\end{proof}

By Conjectures \ref{conj:1}, \ref{conj:2} and Theorem \ref{th:final}, we are led to conjecture the following.

\begin{conjecture}
Consider 
 $$\chi_n^{*}(M_{1,2}(F)) = \sum_{\substack{\langle \lambda \rangle \vdash n, \\ h(\lambda(1)) \leq 2, \ h(\lambda(2)) \leq 3, \\ h(\lambda(3)) \leq 2, \ h(\lambda(4)) \leq 2}} m_{\langle \lambda \rangle} \chi_ {\langle \lambda \rangle}$$ the $n$-th $*$-cocharacter of $M_{1, 2}(F)$.
Let $\langle \lambda \rangle = (\lambda(1), \lambda(2), \lambda(3), \lambda(4))$, with $\lambda(2) = (\gamma_1 + \gamma_2 + \gamma_3, \gamma_2 + \gamma_3, \gamma_3)$, $\lambda(3) = (w_1 + w_2, w_2)$, $\lambda(4) = (\rho_1 + \rho_2, \rho_2)$ and $l = max \{w_1, \rho_1 \}$. 
Then $m_{\langle \lambda \rangle} \neq 0$ if and only if $\lambda(3) = \lambda(4) = \emptyset$ and $h(\lambda(1)) \leq 1$ or $\lambda(j) \neq \emptyset$ for some $j \in \{ 3, 4 \}$ and $|w_1 - \rho_1| \leq 2$ or $\lambda(j) \neq \emptyset$ for some $j \in \{ 3, 4 \}$, $|w_1 - \rho_1| \geq 3$ and $\lambda(2) \neq \emptyset$, with $\gamma_1 + \gamma_2 \geq \lceil \frac{l}{2} \rceil - 1$.
\end{conjecture}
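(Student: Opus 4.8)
Since the ``if'' direction of the stated characterization is precisely Theorem~\ref{th:final} together with Proposition~\ref{prop:12}, the whole content of the conjecture is the converse, and this in particular subsumes Conjectures~\ref{conj:1} and~\ref{conj:2}. Discarding the already-settled case $\lambda(3)=\lambda(4)=\emptyset$ (Proposition~\ref{prop:12}), what remains is to prove: if $\lambda(j)\neq\emptyset$ for some $j\in\{3,4\}$, $|w_1-\rho_1|\geq3$, and moreover $\lambda(2)=\emptyset$ or $\gamma_1+\gamma_2<\lceil l/2\rceil-1$, then $m_{\langle\lambda\rangle}=0$. By Theorems~\ref{th:cocharacter} and~\ref{th:hwv} this amounts to showing that for \emph{every} standard multitableau $T_{\langle\lambda\rangle}$ the associated highest weight vector $f_{T_{\langle\lambda\rangle}}$ lies in $Id_2^*(M_{1,2}(F))$, and by Theorem~\ref{th:id} one may assume $w_1\geq\rho_1$, so that $l=w_1$.

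The plan is to reduce every such $f_{T_{\langle\lambda\rangle}}$ to a finite list of ``forbidden sub-patterns''. Writing $V_0=Fe_1$ and $V_1=Fe_2\oplus Fe_3$, every variable from $Z^+$ or $Z^-$ toggles $V_0$ and $V_1$, every variable from $\lambda(2)$ acts inside $V_1$ and annihilates $V_0$, and every variable from $\lambda(1)$ acts diagonally; moreover the $V_0\to V_1$ and $V_1\to V_0$ parts of a single repeated odd symmetric (resp.\ skew) element are ``orthogonal'', which is exactly what forces $(z^+)^3\equiv0$ and the cyclic identity (b) of Theorem~\ref{id}. Starting from $\mathcal{I}$ --- this cyclic identity, $y_1^-z^+y_2^-\equiv0$, $\tilde{z}_1^+\,y^-\,\tilde{z}_2^+\equiv0$, the mixed identity $z_1^+z_2^+z^--z_2^+z^-z_1^++z^-z_1^+z_2^+\equiv0$, and their $\tilde{\varphi}$-images via Theorem~\ref{th:id} --- and iterating exactly as in the two displayed Examples preceding Conjectures~\ref{conj:1} and~\ref{conj:2}, one obtains a closed family of consequences (e.g.\ $\tilde{z}_1^+z^-\tilde{z}_2^+z^-z^-$, $z^-\tilde{z}_1^+z^-\tilde{z}_2^+z^-$, $z^-z^-\tilde{z}_1^+z^-\tilde{z}_2^+$ and their analogues with $y^-$ in place of $z^-$), each forcing a monomial to vanish on $M_{1,2}(F)$.

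The core is then a combinatorial induction on the number of variables from $Z^+\cup Z^-$ occurring in a monomial of $f_{T_{\langle\lambda\rangle}}$. Such a monomial carries $z_1^+$ with multiplicity $w_1+w_2$ and $z_2^+$ with multiplicity $w_2$, and symmetrically an excess of $\rho_1$ copies of $z_1^-$ over $z_2^-$; the alternation of the standard multitableau pairs these into $w_2$ (resp.\ $\rho_2$) commutator blocks plus $w_1$ ``bare'' $z_1^+$'s and $\rho_1$ ``bare'' $z_1^-$'s. A run of bare $z_1^+$'s can survive only up to length two, and interleaving with bare $z_1^-$'s neutralises at most $\rho_1$ of the bare $z_1^+$'s, so $w_1-\rho_1$ of them must be separated from one another by even skew variables; but $y_1^-z^+y_2^-\equiv0$ means only the columns of $\lambda(2)$ of height $1$ or $2$ (of which there are $\gamma_1+\gamma_2$) can serve as separators, the $\gamma_3$ columns of height $3$ being useless, and a count of the gaps needed --- matching the $\lceil l/2\rceil-1$ separator columns consumed in the constructions of Propositions~\ref{prop:123}, \ref{prop:124} and~\ref{prop:234} --- shows that $\gamma_1+\gamma_2<\lceil l/2\rceil-1$ (in particular $\lambda(2)=\emptyset$) forces every reduced monomial to meet a forbidden pattern. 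The even symmetric variables of $\lambda(1)$ are harmless, since the passage $z^+y^+z^+\mapsto z^+z^+y^+$ preserves being a $*$-identity (proof of Proposition~\ref{prop:13}). Combining, $f_{T_{\langle\lambda\rangle}}\equiv0$ for all $T$, whence $m_{\langle\lambda\rangle}=0$.

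The main obstacle is carrying this induction out rigorously: one must exhibit a \emph{complete} list of forbidden sub-patterns modulo $Id_2^*(M_{1,2}(F))$ that is stable under the rewriting rules coming from $\mathcal{I}$, and then check that every monomial with excess at least $3$ and fewer than $\lceil l/2\rceil-1$ usable separators contains one of them. The cleanest implementation is probably to track, letter by letter, the row and column index of a nonzero entry of $M_{1,2}(F)$ under an evaluation at the basis elements $R_i,M_i,N_i,P_i$ used throughout Propositions~\ref{prop:13}--\ref{prop:234}: three unmatched toggles of the same parity push the image into a zero cell of $M_{1,2}(F)$ unless an even skew variable from the first or second row of $\lambda(2)$ is interposed, and counting these interpositions against $\gamma_1+\gamma_2$ yields the threshold $\lceil l/2\rceil-1$. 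Doing this uniformly across the parities $w_1=2k$ and $w_1=2k-1$, across the four sign-types of skew variables, and across $\lambda(1)=\emptyset$ versus $\lambda(1)\neq\emptyset$ is the delicate part; granted it, the conjecture follows.
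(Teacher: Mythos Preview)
The statement you are attempting to prove is presented in the paper as an open \emph{conjecture}; the paper gives no proof of it. So there is no ``paper's own proof'' to compare against, and your proposal, if correct, would settle something the author leaves open.

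That said, your proposal is not a proof but a programme. You yourself flag the two essential missing pieces: you do not exhibit the promised ``complete list of forbidden sub-patterns'' closed under the rewriting rules from $\mathcal{I}$, and you do not carry out the induction showing that every monomial with excess $\geq 3$ and fewer than $\lceil l/2\rceil-1$ usable separators meets such a pattern. Everything after ``The main obstacle is\ldots'' is an outline of what one would like to do, ending with ``granted it, the conjecture follows''. As written this is a plausibility argument, not a proof.

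Beyond the acknowledged gaps, two points in the sketch need more care. First, the claim that the $\gamma_3$ columns of height three in $\lambda(2)$ are ``useless'' as separators because $y_1^- z^+ y_2^- \equiv 0$ is not justified: a height-three column contributes $St_3(y_1^-,y_2^-,y_3^-)$, not a bare product $y_i^- y_j^-$, and one must argue that no alternating combination can serve to break a forbidden $z^+$-run. Second, you pass freely between ``every monomial of $f_{T_{\langle\lambda\rangle}}$ lies in $Id_2^*(M_{1,2}(F))$'' and ``$f_{T_{\langle\lambda\rangle}}$ itself does''; this direction is fine, but your separator-counting argument is phrased at the level of a single monomial, whereas the alternations coming from the columns of $\lambda(3)$ and $\lambda(4)$ relate different monomials, and it is not clear that the bookkeeping (pairing $z_1^+$ with $z_2^+$ into ``commutator blocks'', leaving exactly $w_1$ bare $z_1^+$'s) survives for an arbitrary standard multitableau rather than just the initial one. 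The paper's two worked examples preceding Conjectures~\ref{conj:1} and~\ref{conj:2} each required ad hoc lists of degree-$5$ and degree-$6$ identities beyond $\mathcal{I}$; producing and verifying such a list uniformly in $n$ is the real content, and it is absent here.
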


\end{document}